\newcommand{\N}{\mathbf{N}}
\newcommand{\Z}{\mathbf{Z}}
\newcommand{\Q}{\mathbf{Q}}
\newcommand{\R}{\mathbf{R}}
\newcommand{\C}{\mathbf{C}}
\newcommand{\F}{\mathbf{F}}
\newcommand{\PP}{\mathbf{P}}
\newcommand{\cA}{\mathscr{A}}
\newcommand{\cC}{\mathscr{C}}
\newcommand{\cD}{\mathscr{D}}
\newcommand{\cH}{\mathscr{H}}
\newcommand{\cJ}{\mathscr{J}}
\newcommand{\cM}{\mathscr{M}}
\newcommand{\cO}{\mathscr{O}}
\newcommand{\cT}{\mathscr{T}}
\newcommand{\cU}{\mathscr{U}}
\newcommand{\cW}{\mathscr{W}}
\newcommand{\cX}{\mathscr{X}}
\newcommand{\cY}{\mathscr{Y}}
\newcommand{\fA}{\mathfrak{A}}
\newcommand{\fC}{\mathfrak{C}}
\newcommand{\fm}{\mathfrak{m}}
\newcommand{\cc}{\mathrm{c}}
\newcommand{\pD}{{^p\mathrm{D}}}
\newcommand{\perf}{\mathrm{perf}}
\newcommand{\perfd}{\mathrm{perfd}}
\newcommand{\Rr}{\mathrm{R}}
\newcommand{\suchthat}{\;\ifnum\currentgrouptype=16 \middle\fi\vert\;}
\newcommand{\abs}[1]{\lvert#1\rvert}
\newcommand\restr[2]{{\left.\kern-\nulldelimiterspace#1\vphantom{\big|}\right|_{#2}}}
\DeclarePairedDelimiter\floor{\lfloor}{\rfloor}
\DeclareMathOperator{\ad}{ad}
\DeclareMathOperator{\Arr}{Arr}
\DeclareMathOperator{\Aut}{Aut}
\DeclareMathOperator{\Bl}{Bl}
\DeclareMathOperator{\Cc}{C}
\DeclareMathOperator{\charac}{char}
\DeclareMathOperator*{\colim}{colim}
\DeclareMathOperator{\D}{D}
\DeclareMathOperator{\et}{\acute{e}t}
\DeclareMathOperator{\Ext}{Ext}
\DeclareMathOperator{\Fun}{Fun}
\DeclareMathOperator{\GL}{GL}
\DeclareMathOperator{\Hh}{H}
\DeclareMathOperator*{\hocolim}{hocolim}
\DeclareMathOperator{\id}{id}
\DeclareMathOperator{\im}{im}
\DeclareMathOperator{\Lan}{Lan}
\DeclareMathOperator{\liset}{lis-\acute{e}t}
\DeclareMathOperator{\Mor}{Mor}
\DeclareMathOperator{\op}{op}
\DeclareMathOperator{\qproet}{qpro\acute{e}t}
\DeclareMathOperator{\rad}{rad}
\DeclareMathOperator{\red}{red}
\DeclareMathOperator{\sh}{sh}
\DeclareMathOperator{\Spa}{Spa}
\DeclareMathOperator{\Spec}{Spec}
\DeclareMathOperator{\Spd}{Spd}
\DeclareMathOperator{\supp}{supp}
\DeclareMathOperator{\trdeg}{trdeg}
\theoremstyle{plain}
\newtheorem{thm}{Theorem}[section]
\newtheorem{prop}[thm]{Proposition}
\newtheorem{lem}[thm]{Lemma}
\newtheorem{cor}[thm]{Corollary}
\theoremstyle{definition}
\newtheorem{defn}[thm]{Definition}
\newtheorem{exmp}[thm]{Example}
\newtheorem{sit}[thm]{Situation}
\theoremstyle{remark}
\newtheorem{rem}[thm]{Remark}
\begin{document}

\title{The cohomology of the moduli space of curves at infinite level}
\author{Emanuel Reinecke}
\thanks{This material is based upon work supported by the National Science Foundation under grant no.\ DMS-1501461, DMS-1801689 and DMS-1440140, and by a Rackham Predoctoral Fellowship.
Parts of this paper were written while the author was in residence at the Mathematical Sciences Research Institute in Berkeley, California, during the spring of 2019.}

\address{Department of Mathematics\\University of Michigan\\Ann Arbor, MI 48109, USA}
\email{\href{mailto:reinec@umich.edu}{reinec@umich.edu}}
\urladdr{\url{http://www-personal.umich.edu/~reinec/}}

\begin{abstract}
  Full level-$n$ structures on smooth, complex curves are trivializations of the $n$-torsion points of their Jacobians.
  We give an algebraic proof that the \'etale cohomology of the moduli space of smooth, complex curves of genus at least $2$ with ``infinite level structure'' vanishes in degrees above $4g-5$.
  This yields a new perspective on a result of Harer who showed such vanishing already at finite level via topological methods.
  We obtain similar results for moduli spaces of stable curves and curves of compact type which are not covered by Harer's methods.
  The key ingredients in the proof are a vanishing statement for certain constructible sheaves on perfectoid spaces and a comparison of the \'etale cohomology of different towers of Deligne--Mumford stacks in the presence of ramification.
\end{abstract}

\maketitle

\section{Introduction}\label{sect:intro}

Let $\cM_g$ be the moduli space of smooth, complex curves of genus $g \ge 2$.
A strong interest in the rational singular cohomology of $\cM_g$ dates back as far as the 1980s.
Its computation could, for example, help to understand the enumerative geometry of $\cM_g$ (cf.\ e.g.\ \cite{MR717614}) or predict the global structure of $\cM_g$ (e.g., Looijenga's conjecture that the coarse space $M_g$ can be covered by $g-1$ open affine subschemes).
Unfortunately, only little is known for general $g$.
In low degrees, genus-independent computations of $\Hh^i(\cM_g(\C),\Q)$ are due to Mumford \cite[Thm.~1]{MR0219543} ($i=1$) and Harer \cite{MR700769} ($i=2$) \cite{MR1106936} ($i=2,3$).
At the other end of the range, Harer showed the following.
\begin{thm}[{\cite[Cor.~4.3]{MR830043}}]\label{thm:Harer}
  For $i > 4g-5$, we have $\Hh^i(\cM_g(\C),\Q) = 0$.
\end{thm}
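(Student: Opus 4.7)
The plan is to translate the statement into a bound on the virtual cohomological dimension of the mapping class group $\Gamma_g$ of a closed orientable surface of genus $g$, and then establish that bound by constructing an explicit low-dimensional classifying space.

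First I would use that Teichm\"uller space $T_g$ is contractible of real dimension $6g-6$, that $\Gamma_g$ acts on it properly discontinuously with finite stabilizers, and that the quotient orbifold is $\cM_g(\C)$. Since the stabilizers are finite and we work with rational coefficients, this gives
\[
  \Hh^i(\cM_g(\C),\Q) \cong \Hh^i(\Gamma_g,\Q).
\]
To remove the torsion in $\Gamma_g$ I would pass to the level-$n$ subgroup $\Gamma_g[n]$ for some $n\ge 3$, which by a classical theorem of Serre is torsion-free of finite index and acts freely on $T_g$. The rational transfer then yields an inclusion $\Hh^i(\Gamma_g,\Q)\hookrightarrow \Hh^i(\Gamma_g[n],\Q)$, and it is enough to verify the vanishing for $\Gamma_g[n]$.

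The heart of the argument --- and what I expect to be the main obstacle --- is the construction of a $\Gamma_g$-equivariant deformation retract (``spine'') $Y\subset T_g$ of dimension exactly $4g-5$. Naively $T_g$ is $(6g-6)$-dimensional, and roughly a third of these directions need to be flowed away equivariantly. The natural tools here are combinatorial: either the arc complex of a once-punctured surface $\Sigma_{g,1}$, whose top-dimensional cells correspond to ideal triangulations, or Strebel/Jenkins quadratic differentials, whose horizontal foliations give a cell decomposition of $T_g$ with a distinguished marked point. In either case the dimension count reduces to an Euler-characteristic calculation showing that a generic triangulation of $\Sigma_{g,1}$, respectively a generic critical graph, depends on $4g-5$ real parameters. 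Establishing that the resulting subcomplex is actually a $\Gamma_g$-equivariant retract --- rather than merely a lower-dimensional skeleton --- is the delicate point and requires a carefully chosen gradient-like flow that avoids the creation of new cells.

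Once such a spine $Y$ is in hand, the quotient $Y/\Gamma_g[n]$ is a $(4g-5)$-dimensional CW-complex model for $K(\Gamma_g[n],1)$, so $\Hh^i(\Gamma_g[n],\Q)=0$ for $i>4g-5$; combining this with the transfer inclusion above concludes the proof.
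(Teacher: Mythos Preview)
Your outline is precisely Harer's original topological argument, and that is exactly what the paper \emph{cites} for this theorem rather than proves: Theorem~\ref{thm:Harer} is attributed to \cite{MR830043} and is not re-derived in the body. So in the narrow sense your proposal agrees with the ``paper's proof,'' namely the reference.

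What the paper itself contributes runs in an entirely different direction. Its main results (Theorems~A and~B) establish vanishing only \emph{at infinite level}, i.e.\ for $\colim_n \Hh^i_{\et}(\cM_g[p^n],\F_p)$, by an algebro-geometric route: push forward along the Torelli morphism $t_g \colon \cM^{\cc}_g \to \cA_g$, bound the defect of semi-smallness, pass to toroidal compactifications of $\cA_g[p^n]$, and invoke a perfectoid Artin vanishing theorem together with perverse $t$-structure estimates. This does not by itself recover Harer's finite-level statement---vanishing of a filtered colimit says nothing about the individual terms---which is why the introduction calls it a ``new perspective'' rather than an alternative proof. The trade-off: your (Harer's) approach yields the sharp finite-level bound but requires the delicate equivariant spine construction you flag; the paper's approach avoids geometric topology entirely but only reaches the infinite-level analogue.

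One technical caution on your sketch: Harer's spine is built directly only for surfaces with nonempty boundary or punctures. The closed-surface case is then deduced from the bordered/punctured case via the Birman exact sequence and a duality argument, not by producing a $(4g-5)$-dimensional spine inside $T_g$ itself. Your plan to use the arc complex of $\Sigma_{g,1}$ therefore lands one step away from the target and needs that extra reduction; the ``Euler-characteristic calculation'' you mention for $\Sigma_{g,1}$ will give the vcd of $\Gamma_{g,1}$, not of $\Gamma_g$.
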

A common thread of all these computations is the pervasive use of tools from geometric topology and Teichm\"uller theory, using the construction of $\cM_g$ as the quotient of contractible Teichm\"uller space $\cT_g$ by the mapping class group $\Gamma_g$.
There has been some interest in finding proofs that rely on more algebro-geometric methods; see e.g.\ \cite[p.~251]{MR1648075}.
For example, Arbarello--Cornalba realized that the results on cohomology in low degree follow via Hodge theory from Theorem~\ref{thm:Harer} \cite{MR1733327} \cite[Thm.~10]{MR2655321}.
The goal of this paper is to give a new, essentially algebro-geometric perspective on Theorem~\ref{thm:Harer}.

\subsection*{Our results}
In \cite{MR830043}, Harer showed in fact that the virtual cohomological dimension of $\Gamma_g$ is $4g-5$.
This has the following consequence:
Let $n \in \N$ such that $p^n \ge 3$.
Then $\Hh^i(\cM_g[p^n](\C),\Lambda) = 0$ for all $i > 4g-5$ and all coefficients $\Lambda$.
Here, $\cM_g[p^n]$ is the moduli space of smooth, complex curves of genus $g \ge 2$ with full level-$p^n$ structure;
it parametrizes smooth, complex curves $C$ of genus $g \ge 2$ together with an isomorphism $\Hh^1(C,\Z/p^n\Z) \xrightarrow{\sim} (\Z/p^n\Z)^{2g}$.
The natural morphism $\cM_g[p^n] \to \cM_g$ ``forgetting the level structure'' is a finite \'etale cover and the finite-index subgroup of $\Gamma_g$ corresponding to this cover is torsion-free.

In this paper, we are interested in the groups $\Hh^i(\cM_g[p^n](\C),\F_p)$.
Since $\dim_{\Q}\Hh^i(\cM_g[p^n](\C),\Q) \le \dim_{\F_p}\Hh^i(\cM_g[p^n](\C),\F_p)$, the Cartan--Leray spectral sequence combined with the vanishing of $\Hh^i(\cM_g[p^n](\C),\F_p)$ in degrees $i>4g-5$ implies Theorem~\ref{thm:Harer}.
Our main result is a vanishing statement ``at infinite level'' in this direction, proved without geometric topology, for $\cM_g[p^n]$ and some (partial) compactifications thereof, which we recall in detail in \S~\ref{sect:moduli-curves}.
\begin{restatable}{mainthm}{vanishing}\label{thm:vanishing}
  Let $g \ge 2$ and $p$ be a prime.
  Let $\cM[p^n]$ be one of the following:
  \begin{enumerate}[label={\upshape(\roman*)}]
    \item\label{thm:vanishing-Mg} the moduli space $\cM_g[p^n]$ of smooth curves of genus $g$ over $\C$ with full level-$p^n$ structure,
    \item\label{thm:vanishing-Mgc} the moduli space $\cM^{\cc}_g[p^n]$ of curves of compact type of genus $g$ over $\C$ with full level-$p^n$ structure, or
    \item\label{thm:vanishing-Mgbar} the moduli space $\overline{\cM_g}[p^n]$ of pre-level-$p^n$ curves of genus $g$ over $\C$ with full level-$p^n$ structure.
  \end{enumerate}
  Then we have
  \[ \colim_n\Hh^i_{\et}(\cM[p^n],\F_p) = 0 \]
  for all $i > 4g-5$ in case \ref{thm:vanishing-Mg} and for all $i > \floor*{\frac{7g}{2}} - 4$ in cases \ref{thm:vanishing-Mgc} and \ref{thm:vanishing-Mgbar}.
\end{restatable}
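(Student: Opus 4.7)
The strategy is to pass to a $p$-adic analytic setting in which the infinite-level moduli space acquires a perfectoid structure, and then apply the vanishing of constructible $\F_p$-sheaves on perfectoid spaces alluded to in the abstract. As a first step, I would replace $\C$ by $\C_p$: since $\cM[p^n]$ is a smooth Deligne--Mumford stack over $\Q$ and $\F_p$ is a finite constant coefficient system, \'etale base change between algebraically closed fields of characteristic zero allows us to work with the adic generic fibres $\cM[p^n]^{\mathrm{ad}}$ over $\Spa \C_p$.

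Second, I would identify $\cM[p^\infty]^{\mathrm{ad}} \colonequals \varprojlim_n \cM[p^n]^{\mathrm{ad}}$ as a diamond admitting an affinoid perfectoid atlas. In case~\ref{thm:vanishing-Mg}, this is inherited by pulling back Scholze's perfectoid infinite-level Siegel modular variety $\cA_g[p^\infty]^{\mathrm{ad}}$ along the Torelli morphism $\cM_g \to \cA_g$. In cases~\ref{thm:vanishing-Mgc} and~\ref{thm:vanishing-Mgbar}, the natural level covers ramify along the boundary divisors; here I would invoke the comparison of different towers of Deligne--Mumford stacks in the presence of ramification (the second key input of the paper) to replace them by auxiliary unramified towers whose infinite-level limits are again controlled by Scholze's construction, while still computing the same $\colim_n \Hh^i_\et(-,\F_p)$.

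Third, by quasicompact continuity of \'etale cohomology,
\[
\colim_n \Hh^i_\et(\cM[p^n]^{\mathrm{ad}},\F_p) \cong \Hh^i_\et(\cM[p^\infty]^{\mathrm{ad}},\F_p),
\]
so it suffices to bound the right-hand side. I would cover $\cM[p^\infty]^{\mathrm{ad}}$ by finitely many affinoid perfectoid opens $U_1,\dots,U_k$ obtained by pulling back a suitable affine open cover of the coarse moduli space. The perfectoid vanishing input forces $\Hh^i_\et(U_j,\F_p) = 0$ for $i > 3g-3 = \dim_{\C}\cM$, and the \v{C}ech spectral sequence promotes this to $\Hh^i_\et(\cM[p^\infty]^{\mathrm{ad}},\F_p) = 0$ for $i > (3g-3) + (k-1)$. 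The bounds $4g-5$ and $\floor*{7g/2}-4$ correspond to taking $k = g-1$ (a Looijenga-type affine cover of $\cM_g$) and $k = \floor*{g/2}$ (its analogue for the partial compactifications in~\ref{thm:vanishing-Mgc}--\ref{thm:vanishing-Mgbar}).

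The main obstacle is Step 2 for the compactified cases: the ramification of the level tower over $\overline{\cM_g}$ and $\cM^{\cc}_g$ prevents one from importing perfectoidness directly from Siegel space, and the tower-comparison lemma must be sharp enough to replace the ramified $p$-power tower by an unramified surrogate with the same colimit of \'etale $\F_p$-cohomologies. A secondary delicate point is producing a finite affinoid perfectoid cover realizing the stated covering numbers, since the required opens may not exist at finite level but only on the perfectoid infinite-level space.
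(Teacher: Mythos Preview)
Your proposal has genuine gaps at every step after the base change to $\C_p$.

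\textbf{Step 2 fails.} You claim perfectoidness of $\cM[p^\infty]^{\mathrm{ad}}$ is ``inherited by pulling back Scholze's perfectoid infinite-level Siegel modular variety along the Torelli morphism.'' But the Torelli morphism $t_g \colon \cM^{\cc}_g \to \cA_g$ is not finite: it is proper with fibers of dimension up to $g-2$ (Lemma~\ref{lem:fiber-dimension}, Proposition~\ref{prop:Torelli-fiber-defect}). Perfectoidness is only known to propagate along finite base change (Proposition~\ref{prop:perfectoidization}); there is no mechanism for it to survive a base change with positive-dimensional fibers, and the paper never asserts that any $\cM[p^\infty]$ is perfectoid. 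The ``tower-comparison in the presence of ramification'' you invoke (Theorem~\ref{thm:tower-cohomology}) goes in the opposite direction: it identifies the cohomology of the $\overline{\cM_g}[p^n]$-tower with that of the $\cM^{\cc}_g[p^n]$-tower, not with anything unramified over $\cA_g$.

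\textbf{Step 3 is circular and numerologically accidental.} Your claimed covering numbers $k = g-1$ for $\cM_g$ and $k = \floor{g/2}$ for the partial compactifications are not available: Looijenga's conjecture that $M_g$ admits a cover by $g-1$ affines is open (the paper mentions it only as motivation), and no analogue with $\floor{g/2}$ affines is known for $\cM^{\cc}_g$ or $\overline{\cM_g}$. Moreover, the ``perfectoid vanishing input'' you want---$\Hh^i_{\et}(U_j,\F_p)=0$ for $i > 3g-3$ on affinoid perfectoid opens---is not the content of the paper's perfectoid Artin vanishing (Theorem~\ref{thm:perfectoid-Artin}), which crucially requires the base $X_0$ to be \emph{proper} and works with Zariski-constructible sheaves, not with a \v{C}ech cover.

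\textbf{What the paper actually does.} The bounds do not come from covering numbers at all. One pushes forward along $t_g$ to $\cA_g$ and computes the perverse amplitude of $\R t_{g,*}$: the maximal fiber dimension $g-2$ gives $\R t_{g,*}\bigl(\pD^{\le 0}\bigr) \subset \pD^{\le g-2}$ (Proposition~\ref{prop:proper-exact}), and the defect of semi-smallness $\floor{g/2}-1$ gives $\R t_{g,*}(F[3g-3]) \in \pD^{\le \floor{g/2}-1}$ for constructible $F$ (Lemma~\ref{lem:defect-exact}). One then embeds $\cA_g[p^n]$ affinely into its proper toroidal compactification $\overline{\cA_g}[p^n]$---whose infinite-level limit \emph{is} perfectoid---and applies the perverse form of perfectoid Artin vanishing (Corollary~\ref{cor:perfectoid-Artin-perverse}). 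The numbers $4g-5 = (3g-3)+(g-2)$ and $\floor{7g/2}-4 = (3g-3)+(\floor{g/2}-1)$ record the dimension shift plus the perverse amplitude of the Torelli pushforward, not a \v{C}ech combinatorial bound.
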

The stack $\overline{\cM_g}[p^n]$ is a smooth, modular compactification of $\cM_g[p^n]$ that was first introduced in \cite{MR2007376};
when $n=0$, a pre-level-$p^n$ curve is simply a stable curve.
A curve of compact type is a stable curve whose dual graph is a tree.
\begin{rem}
  While our bound in case \ref{thm:vanishing-Mg} is the same as the one in Theorem~\ref{thm:Harer}, the only known upper bound on the virtual cohomological dimension (and hence vanishing at finite level) for $\cM^{\cc}_g$ is $5g-6$ \cite[Cor.~0.4]{MR2349902} and thus significantly higher than our bound $\floor*{\frac{7g}{2}} - 4$ for vanishing at infinite level.
  It may be worthwile to further investigate this discrepancy.
\end{rem}
\begin{rem}
  The groups $\colim_n\Hh^i_{\et}(\cM_g[p^n],\F_p)$ are the mod $p$ analogs of the completed cohomology groups for the tower of spaces
  \[ \dotsb \to \cM_g[p^n] \to \dotsb \to \cM_g[p] \to \cM_g; \]
  cf.\ \cite{MR2207783,MR2905536}.
  A d\'evissage argument shows that $\colim_n\Hh^i_{\et}(\cM_g[p^n],\Z/p^s\Z) = 0$ in degrees $i > 4g-5$ for all $s \in \N$ and thus the vanishing of the integral completed cohomology groups
  \[ \lim_s\colim_n\Hh^i_{\et}(\cM_g[p^n],\Z/p^s\Z) \]
  in this range of degrees.
\end{rem}

\subsection*{Strategy of the proof of Theorem~\ref{thm:vanishing}}
We reduce \ref{thm:vanishing-Mg} and \ref{thm:vanishing-Mgbar} to statements about $\cM^{\cc}_g$.
To do so, first observe that since the boundaries $\cM^{\cc}_g[p^n] \smallsetminus \cM_g[p^n]$ are Cartier divisors, the inclusions $j_n \colon \cM_g[p^n] \hookrightarrow \cM^{\cc}_g[p^n]$ are affine morphisms.
In particular, the functors $\R j_{n,*}$ are right t-exact for the perverse t-structure; see \S~\ref{sect:perverse-vanishing} for a review of the necessary prerequisites concerning perverse t-structures.
Parts \ref{thm:vanishing-Mg} and \ref{thm:vanishing-Mgc} of Theorem~\ref{thm:vanishing} thus follow from parts \ref{thm:perverse-perverse} and \ref{thm:perverse-constructible}, respectively, of the following, more general statement.
\begin{restatable}{mainthm}{perverse}\label{thm:perverse}
  Let $g \ge 2$ and $p$ be a prime.
  Let $\cM^{\cc}_g[p^n]$ be the moduli space of curves of compact type of genus $g$ over $\C$ with full level-$p^n$ structure.
  Let $\pi_n \colon \cM^{\cc}_g[p^n] \to \cM^{\cc}_g$ be the maps ``forgetting the level structure.''
  \begin{enumerate}[label={\upshape(\roman*)}]
    \item\label{thm:perverse-perverse} If $K \in \pD(\cM^{\cc}_g,\F_p)^{\le 0}$, we have $\colim_n\Hh^i_{\et}(\cM^{\cc}_g[p^n],\pi^*_n K) = 0$ for all $i > g-2$.
    \item\label{thm:perverse-constructible} If $F$ is a constructible sheaf of $\F_p$-modules on $\cM^{\cc}_g$, we have $\colim_n\Hh^i_{\et}(\cM^{\cc}_g[p^n],\pi^*_n F) = 0$ for all $i > \floor*{\frac{7g}{2}} - 4$.
  \end{enumerate}
\end{restatable}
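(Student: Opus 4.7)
The plan is to pass to infinite level and compare the tower $(\cM^{\cc}_g[p^n])_n$ with Scholze's perfectoid Siegel modular variety via the Torelli morphism, reducing the claim to a perverse vanishing statement for constructible sheaves on perfectoid spaces.

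First, the Torelli morphism $\tau \colon \cM^{\cc}_g \to \cA_g$, sending a curve of compact type to its polarized Jacobian, is compatible with level-$p^n$ structures through the Weil pairing: a full level-$p^n$ structure on $C$ yields a symplectic trivialization of $J(C)[p^n]$. Hence $\tau$ lifts to a morphism of towers $\tau_n \colon \cM^{\cc}_g[p^n] \to \cA_g[p^n]$ and, in the limit, to $\tau_\infty \colon \cM^{\cc}_g[p^\infty] \to \cA_g[p^\infty]$. After analytification and $p$-adic completion, the target admits a perfectoid structure and carries a Hodge--Tate period map to the Lagrangian Grassmannian, bringing the problem into the realm of perfectoid geometry.

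The first key input I would invoke is an Artin-type vanishing theorem for perverse constructible sheaves on perfectoid adic spaces: on a sufficiently small (e.g.\ affinoid) perfectoid space equipped with a morphism to a finite-type variety, the \'etale cohomology of the pullback of a perverse sheaf vanishes above the dimension of its image in the target. A careful inspection of the Hodge--Tate image of the Torelli locus, together with its compatible stratification by topological type of the curve, should identify the relevant dimension as $g-2$, giving the bound in part \ref{thm:perverse-perverse}.

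The second key input---and the main technical obstacle---is a comparison of the tower $(\cM^{\cc}_g[p^n])_n$ with an auxiliary tower that behaves well with respect to the perfectoid limit. The issue is that $\cM^{\cc}_g[p^n] \to \cM^{\cc}_g$ is ramified along the boundary, because the stacky inertia at a node of a stable curve interacts nontrivially with the action on $p$-torsion. I would therefore introduce an intermediate finite \'etale cover of $\cM^{\cc}_g$ (obtained, for instance, by rigidifying the relevant inertia) and prove, as a separate lemma, that the $\F_p$-colimit cohomologies of the original Torelli tower and of the modified tower agree. This step is where the ``comparison of different towers of Deligne--Mumford stacks in the presence of ramification'' announced in the introduction would be formalized.

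Finally, part \ref{thm:perverse-constructible} follows from part \ref{thm:perverse-perverse} via a $t$-structure argument. A constructible sheaf $F$ on $\cM^{\cc}_g$ decomposes into its perverse cohomology sheaves, and applying \ref{thm:perverse-perverse} to each of them gives $\colim_n\Hh^i_{\et}(\cM^{\cc}_g[p^n], \pi_n^* F) = 0$ for $i > (g-2) + M$, where $M$ bounds the perverse amplitude of $F$. The sharper numerical bound $\floor*{\frac{7g}{2}} - 4$ corresponds to the estimate $M \le \floor*{\frac{5g}{2}} - 2$, which I expect to come from an explicit analysis of the stratification of $\cM^{\cc}_g$ by topological type: the deepest stratum consists of trees of elliptic curves and has dimension only $2g-2$ rather than $3g-3$, and iterating the associated long exact sequences along the boundary strata refines the naive bound coming from $\dim \cM^{\cc}_g$.
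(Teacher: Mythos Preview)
Your proposal has two genuine gaps.

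First, the covers $\pi_n \colon \cM^{\cc}_g[p^n] \to \cM^{\cc}_g$ are finite \emph{\'etale}, not ramified: $\cM^{\cc}_g[p^n]$ is the fiber product $\cM^{\cc}_g \times_{\cA_g} \cA_g[p^n]$, and $\cA_g[p^n] \to \cA_g$ is \'etale. The ``comparison of towers in the presence of ramification'' from the introduction is used only for the compactification $\overline{\cM_g}[p^n]$ (Theorem~C), where the transition maps really are ramified over the non-compact-type boundary; it plays no role in Theorem~B. So your second key input is solving a nonexistent problem, and the Hodge--Tate period map is a red herring. What is actually needed is a compactification on the \emph{abelian-variety} side: one passes to toroidal compactifications $\cA_g[p^n] \hookrightarrow \overline{\cA_g}[p^n]$, observes that this open immersion is affine (complement of a Cartier divisor) so that $\R\iota_{n,*}$ preserves $\pD^{\le \ell}$, and then applies the perfectoid Artin vanishing (Corollary~\ref{cor:perfectoid-Artin-perverse}) to the proper perfectoid tower $\overline{\cA_g}[p^n]$ of Scholze and Pilloni--Stroh.

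Second, your derivation of (ii) from (i) cannot give the stated bound. A general constructible sheaf $F$ on the $(3g-3)$-dimensional stack $\cM^{\cc}_g$ satisfies $F \in \pD^{\le 3g-3}$ and no better (take $F = \F_p$); applying (i) then yields vanishing only above $(g-2)+(3g-3) = 4g-5$, not $\floor*{\tfrac{7g}{2}}-4$. The stratification of $\cM^{\cc}_g$ does not help here because the generic stratum already has full dimension. The paper instead bounds $\R t_{g,*}(F[3g-3])$ directly: it computes the \emph{defect of semi-smallness} of the Torelli morphism to be $r(t_g) = \floor*{\tfrac{g}{2}}-1$ (via the explicit stratification of the Torelli locus $\cT_g \subset \cA_g$ by partition type and the fiber dimensions over each stratum), so that $\R t_{g,*}(F[3g-3]) \in \pD(\cA_g)^{\le \floor{g/2}-1}$ by Lemma~\ref{lem:defect-exact}. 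This, together with the perfectoid vanishing on $\overline{\cA_g}[p^n]$, gives vanishing above $(3g-3) + \floor*{\tfrac{g}{2}} - 1 = \floor*{\tfrac{7g}{2}} - 4$. The bound $g-2$ in (i) arises analogously but from the maximal fiber dimension of $t_g$ (Proposition~\ref{prop:proper-exact}) rather than from anything about the Hodge--Tate image.
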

Here, $\D(\cM^{\cc}_g,\F_p)$ denotes the bounded derived category of constructible sheaves of $\F_p$-modules on $\cM^{\cc}_g$.
Theorem~\ref{thm:perverse} is shown in three steps.
\begin{enumerate}
  \item \emph{Analysis of the Torelli morphism}.\label{summary-Torelli}
    Let $\cA_g$ be the moduli space of principally polarized abelian varieties of dimension $g$ over $\C$.
    The Torelli morphism $t_g \colon \cM^{\cc}_g \to \cA_g$ sends a curve of compact type to the product of the Jacobians of its irreducible components.
    Analyzing the fiber dimensions of $t_g$, we produce bounds on its cohomological amplitude: in case \ref{thm:perverse-perverse}, $\R t_{g,*}K \in \pD(\cA_g,\F_p)^{\le g-2}$ and in case \ref{thm:perverse-constructible}, $\R t_{g,*}(F[3g-3]) \in \pD(\cA_g,\F_p)^{\le \floor*{\frac{g}{2}}-1}$.
    The same bounds hold in the presence of level structures.
    See \S~\ref{subsect:Torelli} for details.
  \item \emph{Passage to toroidal compactifications}.\label{summary-toroidal}
    Let $\rho_n \colon \cA_g[p^n] \to \cA_g$ be the natural covering maps.
    By Step~\ref{summary-Torelli}, it remains to prove that the functor
    \[ \D(\cA_g,\F_p) \to \D(\F_p), \quad K \mapsto \colim_n \R\Gamma(\cA_g[p^n],\rho^*_nK) \]
    takes $\pD(\cA_g,\F_p)^{\le m}$ to $\D(\F_p)^{\le m}$ for all $m \in \Z$.
    After fixing some additional auxiliary data, there are compatible toroidal compactifications $\cA_g[p^n] \subset \overline{\cA_g}[p^n]$ with forgetful maps $\bar{\rho}_{mn} \colon \overline{\cA_g}[p^m] \to \overline{\cA_g}[p^n]$.
    As in the case of $\cM_g[p^n]$, the boundaries are Cartier divisors, and we can reduce the statement to showing that
    \[ \D(\overline{\cA_g}[p^n],\F_p) \to \D(\F_p), \quad L \mapsto \colim_m \R\Gamma(\overline{\cA_g}[p^m],\bar{\rho}^*_{mn}L) \]
    takes $\pD(\overline{\cA_g}[p^n],\F_p)^{\le \ell}$ to $\D(\F_p)^{\le \ell}$ for all $n \in \Z_{\ge 0}$ and $\ell \in \Z$.
  \item \emph{Use of perfectoid covers}.\label{summary-perfectoid}
    By work of Scholze and Pilloni--Stroh, the inverse limit of the projective system $\overline{\cA_g}[p^n]$, $n \in \Z_{\ge 0}$, is similar (in the sense of \cite[Def.~2.4.1]{MR3272049}) to a perfectoid space; see Example~\ref{exmp:Ag-tor}.
    Using his ``primitive comparison theorem,'' Scholze \cite[\S~4.2]{MR3418533} deduced from this that $\colim_n\Hh^i(\overline{\cA_g}[p^n],\F_p) = 0$ for $i > \dim \cA_g$.
    In \S~\ref{sect:perfectoid-Artin}, we prove a more general statement for Zariski-constructible sheaves on cofiltered inverse systems of proper rigid spaces with finite transition maps whose inverse limit is similar to a perfectoid space;
    this yields the claim about perverse t-structures from Step~\ref{summary-toroidal}.
    After a standard d\'evissage, the key new case is the extension by zero of a non-trivial local system defined on a Zariski-open subset (Lemma~\ref{lem:perfectoid-Artin-key}), whose proof relies on \cite{bhatt2019prisms}.
\end{enumerate}

In order to reduce Theorem~\ref{thm:vanishing}.\ref{thm:vanishing-Mgbar} to a statement about $\cM^{\cc}_g$, we show the following assertion, which holds over any algebraically closed field $k$ of characteristic not equal to $p$.
\begin{restatable}{mainthm}{Mgbar}\label{thm:Mgbar}
  Let $\Lambda_0$ be an \'etale $\F_p$-local system on $\overline{\cM_g}$, with pullbacks $\Lambda_n$ to $\overline{\cM_g}[p^n]$.
  Then for all $i \ge 0$, the natural map
  \[ \colim_n \Hh^i_{\et}(\overline{\cM_g}[p^n],\Lambda_n) \to \colim_n \Hh^i_{\et}(\cM^{\cc}_g[p^n],\Lambda_n) \]
  is an isomorphism.
\end{restatable}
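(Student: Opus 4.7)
My plan is to recast Theorem~\ref{thm:Mgbar} via excision as the vanishing of a local cohomology colimit along the non-separating boundary divisor, and then to verify this by a direct stalkwise computation: the tower $\overline{\cM_g}[p^n] \to \overline{\cM_g}$ is Kummer-ramified of index $p^n$ along that divisor, so the transition maps on the cohomology of the boundary complement act as multiplication by powers of $p$ and hence die in $\F_p$-coefficients.

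Write $\rho_n \colon \overline{\cM_g}[p^n] \to \overline{\cM_g}$ for the forgetful map, let $i \colon \delta_0 \hookrightarrow \overline{\cM_g}$ be the closed immersion of the locus of stable curves with at least one non-separating node (so that $\cM^{\cc}_g = \overline{\cM_g}\smallsetminus\delta_0$), and set $D_n \coloneqq \rho_n^{-1}(\delta_0)$. Excision and proper base change for the finite map $\rho_n$ produce the triangle
\[ \R\Gamma(\delta_0,\R i^!\rho_{n,*}\Lambda_n) \to \R\Gamma(\overline{\cM_g}[p^n],\Lambda_n) \to \R\Gamma(\cM^{\cc}_g[p^n],\Lambda_n) \to +1, \]
so it is enough to show $\colim_n \R i^!\rho_{n,*}\Lambda_n = 0$ on $\delta_0$, which can be checked on stalks since the colimit is filtered.

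Fix a geometric point $\bar{x}$ of $\delta_0$ corresponding to a stable curve with $k \ge 1$ non-separating nodes. After pulling back along an auxiliary prime-to-$p$ rigidifying level structure, the strictly henselian local ring of $\overline{\cM_g}$ at $\bar{x}$ takes the form $k[[t_1,\ldots,t_{3g-3}]]$ with the non-separating boundary locally $t_1\cdots t_k = 0$. Each Picard--Lefschetz monodromy $T_i$ around a non-separating vanishing cycle is unipotent of the form $I+N_i$ with $N_i^2 = 0$, so $T_i^{p^n}\equiv I$ on $\Hh^1(C,\Z/p^n\Z)$ while $T_i^\ell \not\equiv I$ for $\ell < p^n$; thus the pre-level-$p^n$ tower of~\cite{MR2007376} is \'etale-locally the Kummer cover $s_i^{p^n} = t_i$ in each non-separating direction (and \'etale in each separating one). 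Therefore $\hat X_n \coloneqq \overline{\cM_g}[p^n] \times_{\overline{\cM_g}} \Spec\cO^{\sh}_{\overline{\cM_g},\bar{x}}$ is $\Spec k[[s_1,\ldots,s_k,t_{k+1},\ldots,t_{3g-3}]]$, its boundary is $\hat D_n = \{s_1\cdots s_k = 0\}$, and by the Kummer sequence the $\F_p$-algebra $\Hh^*_{\et}(\hat U_n,\F_p)$ on the complement $\hat U_n \coloneqq \hat X_n\smallsetminus\hat D_n$ is the exterior algebra on the $k$ degree-one classes $[s_1],\ldots,[s_k]$. Since $\Lambda_0$ is lisse on the strictly henselian $\hat X_0$ it is constant with some value $V$, so the excision triangle on $\hat X_n$ becomes
\[ \R\Gamma_{\hat D_n}(\hat X_n,\Lambda_n) \to V \to V \otimes_{\F_p} \Hh^*_{\et}(\hat U_n,\F_p) \to +1. \]
Under the transition $\hat U_m \to \hat U_n$ one has $s_i^{(n)} \mapsto (s_i^{(m)})^{p^{m-n}}$, so each generator $[s_i^{(n)}]$ of $\Hh^1$ pulls back to $p^{m-n}[s_i^{(m)}] = 0$ in $\F_p$-cohomology for $m > n$. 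The positive-degree part therefore collapses in the colimit, yielding $\colim_n \R\Gamma(\hat U_n,\Lambda_n) = V = \colim_n \R\Gamma(\hat X_n,\Lambda_n)$ and hence $\colim_n \R\Gamma_{\hat D_n}(\hat X_n,\Lambda_n) = 0$, as desired.

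The main technical obstacle is verifying the asserted local model: that $\rho_n$ is \'etale-locally the product of Kummer covers $s_i^{p^n} = t_i$ in the $k$ non-separating directions and \'etale in the separating ones. This should follow from the modular description of pre-level-$p^n$ curves in~\cite{MR2007376} combined with the standard identification of the local monodromy on the $p^n$-torsion of the generalized Jacobian with the Picard--Lefschetz transformation on vanishing cycles; the stacky structure of $\overline{\cM_g}$ is handled by the auxiliary rigidifying cover. Once the local picture is in place, the argument uses only the \'etale cohomology of $\G_m$, and so goes through over any algebraically closed field of characteristic different from $p$, as claimed.
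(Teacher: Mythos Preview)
Your overall strategy matches the paper's: reduce via the excision triangle to showing that the colimit of local cohomology along the non-separating boundary vanishes, then check this on stalks using the fact that the transition maps are Kummer-type in the non-separating directions so that the degree-one classes die under pullback. This is exactly the architecture of Theorem~\ref{thm:tower-cohomology}, and the core Kummer computation is Lemma~\ref{lem:m-ramified-vanishing}.

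The gap is in your local model. Your assertion that $\hat X_n = \overline{\cM_g}[p^n] \times_{\overline{\cM_g}} \Spec\cO^{\sh}_{\overline{\cM_g},\bar{x}}$ is (componentwise) the scheme $\Spec k\llbracket s_1,\dotsc,s_k,t_{k+1},\dotsc,t_{3g-3}\rrbracket$ is false: the stack $\overline{\cM_g}[p^n]$ has nontrivial inertia along the boundary, coming from the $\mu_{p^n}$-automorphisms of the twisted curve at each non-separating node. Each local piece is a quotient $[\Spec k\llbracket t_1,\dotsc,t_{3g-3}\rrbracket / H_{y_n}]$ by a $p$-group $H_{y_n}$ which is nontrivial precisely when the vanishing cycles $c_1,\dotsc,c_k$ are linearly dependent in $\Hh_1(C_\eta,\Z/p^n\Z)$; see Lemma~\ref{lem:description-automorphisms} and Example~\ref{exmp:description-Hy}. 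Your Picard--Lefschetz argument shows each $T_i$ individually has order $p^n$, but misses the relations among the $T_i$; when such relations exist, $\rho_n$ is \emph{not} \'etale-locally a product of Kummer covers, and no prime-to-$p$ auxiliary level can remove this $p$-power inertia.

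This is not a cosmetic issue. Because $H_{y_n}$ is a $p$-group, its cohomology with $\F_p$-coefficients contributes nontrivially to the stalk, so vanishing cannot be read off from the scheme-level exterior-algebra computation. Example~\ref{exmp:Mumford-compactification} treats the wheel of two genus-$1$ curves (where $c_1$ and $c_2$ are homologous, hence $H_{y_n}\simeq\mu_{p^n}$) and shows that on the \emph{coarse} space --- which is what your model actually describes --- the analogous local-cohomology colimit does \emph{not} vanish. The paper therefore keeps the stacky structure and argues via a hypercohomology spectral sequence (Lemma~\ref{lem:local-ramification}): the single-step transition kills the positive-degree cohomology of $U_m$ (your Kummer observation), hence kills one graded piece of the induced filtration on $\Hh^{\ell-1}\bigl(K_m,\tau^{>0}\R\Gamma(U_m,\F^{\oplus r}_p)\bigr)$, so the composite of $\ell$ transition maps vanishes on the degree-$\ell$ stalk.
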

The key observation in the proof is that the transition maps $\overline{\cM_g}[p^{n+1}] \to \overline{\cM_g}[p^n]$ are highly ramified over the boundary $\overline{\cM_g}[p^n] \smallsetminus \cM^{\cc}_g[p^n]$:
on complete local rings, they are given by 
\[ k\llbracket t_1,\dotsc,t_{3g-3}\rrbracket \to k\llbracket t_1,\dotsc,t_{3g-3}\rrbracket, \quad t_i \mapsto \begin{cases} t^p_i & \text{if } 1 \le i \le \gamma, \\ t_i & \text{if } \gamma < i < 3g-3 \end{cases}  \]
after a suitable choice of coordinates $t_i$ for which $\overline{\cM_g}[p^n] \smallsetminus \cM^{\cc}_g[p^n]$ corresponds to $\{ t_1 \dotsm t_\gamma = 0 \}$;
see Lemma~\ref{lem:Mg-p-ramified}.
The claim then follows from a careful analysis of the maps between the excision triangles for the inclusions $\cM^{\cc}_g[p^n] \subset \overline{\cM_g}[p^n]$.
We refer to \S~\ref{sect:boundary-vanishing} for details and a more general statement about projective systems of smooth Deligne--Mumford stacks with similar ramification over a system of normal crossings divisors.

In the argument, it really seems necessary to use the compactifictions via pre-level-$p^n$ curves from \cite{MR2007376}.
A ``na\"{i}ve'' compactification of $M_g[p^n]$ as the normalization of $\overline{M_g}$ in the function field of $M_g[p^n]$, which turns out to be the coarse moduli space of $\overline{\cM_g}[p^n]$, does not exhibit the same ramification behavior; see Example~\ref{exmp:Mumford-compactification}.
For the toroidal compactifications $\cA_g[p^n] \subset \overline{\cA_g}[p^n]$, such complications do not arise:
the $\overline{\cA_g}[p^n]$ are algebraic spaces for $p^n \geq 3$ and the maps $\overline{\cA_g}[p^{n+1}] \to \overline{\cA_g}[p^n]$ are sufficiently ramified over the boundary $\overline{\cA_g}[p^n] \smallsetminus \cA_g[p^n]$ so that
\[ \colim_n \Hh^i_{\et}(\overline{\cA_g}[p^n],\F_p) \to \colim_n \Hh^i_{\et}(\cA_g[p^n],\F_p) \]
is again an isomorphism.
Thus, the aforementioned results of Scholze \cite{MR3418533} and Pilloni--Stroh \cite[Thm.~0.4]{MR3512528} yield the analogous statement $\colim_n \Hh^i_{\et}(\cA_g[p^n],\F_p) = 0$ for $i > \dim \cA_g = \frac{g(g+1)}{2}$, as had been observed earlier by Scholze.

\subsection*{Notation}
Throughout this paper, we fix a prime $p \in \N$.
For any Deligne--Mumford stack $\cX$, we denote from now on the bounded derived category of constructible sheaves of $\F_p$-modules on the small \'etale site of $\cX$ by $\D(\cX)$;
for the algebraic stacks appearing in \S~\ref{sect:perverse-vanishing}, we use the lisse-\'etale site as developed in \cite{MR2312554} or \cite{liu2017enhanced}.

\section{Vanishing from perverse sheaves}\label{sect:perverse-vanishing}

Throughout this section, fix a separably closed field $k$ of characteristic not equal to $p$.
Recall the following definition from \cite[\S~4]{MR2480756}.
\begin{defn}\label{defn:perverse-t-structure}
  Let $\cX$ be an algebraic stack of finite type over $k$.
  Let $\pi \colon X \to \cX$ be a smooth surjection from a scheme $X$ of finite type over $k$.
  Denote the \emph{relative dimension} of $\pi$ at a point $x \in X$ by $d_\pi(x) \colonequals \dim_x(X_{\pi(x)})$ and set $\dim(x) \colonequals \trdeg(\kappa(x)/k)$.
  For any $n \in \Z$, the category of \emph{complexes bounded above $n$ for the perverse t-structure} is the full subcategory
  \[ \pD(\cX)^{\le n} \colonequals \{ K \in \D(\cX) \suchthat \cH^j\bigr((\pi^*K)_x\bigl) = 0 \text{ for all } x \in X \text{ and } j > n + d_\pi(x) - \dim(x) \} \subset \D(\cX), \]
  where $(\pi^*K)_x$ is the derived pullback of $\pi^*K$ under the inclusion of $x$ into $X$,
\end{defn}
By \cite[Lem.~4.1]{MR2480756}, Definition~\ref{defn:perverse-t-structure} is independent of the choice of $\pi \colon X \to \cX$.
Moreover, since the restriction functor induces an equivalence between the categories of constructible sheaves on $X_{\liset}$ and $X_{\et}$, we identify $\pi^*K$ with a sheaf on the small \'etale site of $X$.

In this section, we collect some known foundational results about the derived direct image of $\pD(\cX)^{\le n}$ under certain morphisms for later reference.
Although not strictly necessary, it will be convenient later on to work in the generality of stacks.
We use the dimension theory for stacks developed in \cite[\href{https://stacks.math.columbia.edu/tag/0DRE}{\S~0DRE}]{stacks-project}.
\begin{rem}\label{rem:constant-fiber-dimension}
  The relative dimension of any smooth morphism is locally constant on the domain \cite[\href{https://stacks.math.columbia.edu/tag/0DRQ}{Lem.~0DRQ}]{stacks-project}.
  In order to show that a given $K \in \D(\cX)$ is contained in $\pD(\cX)^{\le n}$, it therefore suffices to check that $\pi^*K \in \pD(X)^{\le n + d_\pi}$ for every smooth (not necessarily surjective) morphism $\pi \colon X \to \cX$ from a connected scheme $X$ of finite type with constant relative dimension $d_\pi$ at every $x \in X$.
\end{rem}
\begin{thm}\label{thm:affine-exact}
  Let $f \colon \cX \to \cY$ be an affine morphism between algebraic stacks of finite type over $k$.
  Then $\R f_*\bigl(\pD(\cX)^{\le n}\bigr) \subseteq \pD(\cY)^{\le n}$ for all $n \in \Z$.
\end{thm}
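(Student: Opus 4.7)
The plan is to reduce to the classical Artin vanishing theorem for affine morphisms between finite-type $k$-schemes, using that the perverse t-structure on a stack is determined stalkwise on any smooth cover via Definition~\ref{defn:perverse-t-structure}.

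First I would record a preliminary shift property for smooth pullback: for any smooth morphism $g\colon \cW \to \cZ$ between algebraic stacks of finite type over $k$ of locally constant relative dimension $d$, one has $g^*\bigl(\pD(\cZ)^{\le n}\bigr) \subseteq \pD(\cW)^{\le n+d}$. To verify this, pick a smooth surjection $\pi\colon W \to \cW$ from a scheme with constant $d_\pi$, observe that $g \circ \pi$ is smooth of relative dimension $d_\pi + d$ at every point of $W$, and read off the required stalk vanishing for $\pi^*g^*K = (g\pi)^*K$ from the hypothesis $K \in \pD(\cZ)^{\le n}$ applied to the (not necessarily surjective) smooth morphism $g \circ \pi$, as permitted by Remark~\ref{rem:constant-fiber-dimension}.

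Next, fix $K \in \pD(\cX)^{\le n}$ and an arbitrary smooth morphism $\rho\colon Y \to \cY$ from a connected $k$-scheme of finite type with constant relative dimension $d_\rho$; by Remark~\ref{rem:constant-fiber-dimension} it is enough to prove $\rho^*\R f_*K \in \pD(Y)^{\le n+d_\rho}$. Form the Cartesian square
\begin{equation*}
\begin{tikzcd}
X \ar[r, "\rho'"] \ar[d, "f'"'] & \cX \ar[d, "f"] \\
Y \ar[r, "\rho"'] & \cY.
\end{tikzcd}
\end{equation*}
Since $f$ is affine, $X$ is a scheme and $f'$ is an affine morphism of finite-type $k$-schemes, while $\rho'$ inherits smoothness of relative dimension $d_\rho$. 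Smooth base change in the six-functor formalism for constructible \'etale sheaves on algebraic stacks (as developed in \cite{MR2312554} or \cite{liu2017enhanced}) identifies $\rho^*\R f_*K$ with $\R f'_*\rho'^*K$. The shift property from the first step gives $\rho'^*K \in \pD(X)^{\le n+d_\rho}$, and so the classical Artin vanishing theorem for affine morphisms between schemes yields $\R f'_*\rho'^*K \in \pD(Y)^{\le n+d_\rho}$, as required.

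I do not expect a genuine obstacle: the deep input (scheme-level affine Lefschetz) is imported as a black box, and the remaining work is purely formal — carefully unpacking Definition~\ref{defn:perverse-t-structure}, using Remark~\ref{rem:constant-fiber-dimension} to reduce both directions to stalkwise bounds on smooth scheme covers, and applying smooth base change in the setting of representable affine $f$ and smooth $\rho$.
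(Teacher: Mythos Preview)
Your proposal is correct and follows essentially the same route as the paper's own proof: reduce to a smooth chart $Y \to \cY$ via Remark~\ref{rem:constant-fiber-dimension}, use affineness of $f$ to get that the fibre product $X$ is a scheme, invoke smooth base change (the paper cites \cite[Prop.~9.8.(i)]{MR2312554}) to identify $\rho^*\R f_*K \simeq \R f'_*\rho'^*K$, and then apply the scheme-level affine Artin vanishing from \cite[Thm.~XIV.3.1]{SGA4} after noting that $\rho'^*K \in \pD(X)^{\le n+d_\rho}$. The only cosmetic difference is that you isolate the smooth-pullback shift as a separate preliminary, whereas the paper absorbs it into a one-line appeal to \cite[\href{https://stacks.math.columbia.edu/tag/0DRN}{Lem.~0DRN}]{stacks-project} and Definition~\ref{defn:perverse-t-structure}.
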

\begin{proof}
  Let $K \in \pD(\cX)^{\le n}$.
  By Remark~\ref{rem:constant-fiber-dimension}, it suffices to check that $\pi^*\R f_*K \in \pD(Y)^{\le n + d_\pi}$ for every smooth morphism $\pi \colon Y \to \cY$ from a connected scheme $Y$ of finite type over $k$ of relative dimension $d_\pi$.
  Since $f$ is affine, $X \colonequals \cX \times_{\cY} Y$ is a scheme as well.
  Denote the projection to the first and second factor by $\pi'$ and $f'$, respectively.
  By \cite[\href{https://stacks.math.columbia.edu/tag/0DRN}{Lem.~0DRN}]{stacks-project}, $d_{\pi'}$ is constant and equals $d_\pi$.
  In particular, $\pi^{\prime,*}K \in \pD(X)^{\le n + d_\pi}$.
  By \cite[Prop.~9.8.(i)]{MR2312554}, $\pi^*\R f_*K \simeq \R f'_*\pi^{\prime,*}K$ as complexes on $Y_{\et}$, so the assertion follows from \cite[Thm.~XIV.3.1]{SGA4}.
\end{proof}
We will mainly use Theorem~\ref{thm:affine-exact} when $f$ is the inclusion of the complement of a Cartier divisor.
\begin{prop}\label{prop:proper-exact}
  Let $f \colon \cX \to \cY$ be a proper morphism between algebraic stacks of finite type over $k$ which is representable by algebraic spaces.
  Assume that for any $y \in \abs{\cY}$, the fiber dimension $\dim(\cX_y)$ is at most $d$.
  Then $\R f_*\bigl(\pD(\cX)^{\le n}\bigr) \subseteq \pD(\cY)^{\le n+d}$ for all $n \in \Z$.
\end{prop}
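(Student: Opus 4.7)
The plan is to imitate the proof of Theorem~\ref{thm:affine-exact}, replacing the affine vanishing of SGA~4 by its analog for proper morphisms of bounded fibre dimension. Fix $K \in \pD(\cX)^{\le n}$. By Remark~\ref{rem:constant-fiber-dimension}, it suffices to check that $\pi^*\R f_*K$ lies in $\pD(Y)^{\le n+d+d_\pi}$ for each smooth morphism $\pi \colon Y \to \cY$ from a connected scheme $Y$ of finite type over $k$ of constant relative dimension $d_\pi$.

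Next I would form the cartesian square with projections $\pi' \colon X \to \cX$ and $f' \colon X \to Y$, where $X \colonequals \cX \times_{\cY} Y$. Since $f$ is representable by algebraic spaces, $X$ is again a finite-type $k$-algebraic space; by \cite[\href{https://stacks.math.columbia.edu/tag/0DRN}{Lem.~0DRN}]{stacks-project} the map $\pi'$ is smooth of constant relative dimension $d_\pi$, so $\pi^{\prime,*}K \in \pD(X)^{\le n+d_\pi}$, and $f'$ is proper with fibre dimensions still bounded by $d$. Smooth base change \cite[Prop.~9.8.(i)]{MR2312554} identifies $\pi^*\R f_*K$ with $\R f'_*\pi^{\prime,*}K$, so the statement reduces to the following assertion: for a proper morphism $f'$ of finite-type $k$-algebraic spaces with fibres of dimension at most $d$, the functor $\R f'_*$ carries $\pD(X)^{\le m}$ into $\pD(Y)^{\le m+d}$ for every $m \in \Z$.

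This last assertion is classical for schemes---it follows from proper base change together with Artin's bound that the cohomology of a constructible torsion sheaf on a proper $k$-scheme of dimension $\le d$ vanishes above degree $2d$---and passes to algebraic spaces via an \'etale presentation of $Y$, since the membership condition in Definition~\ref{defn:perverse-t-structure} is tested stalkwise on \'etale covers. The main obstacle is purely organizational; no substantively new ingredient beyond those already used in Theorem~\ref{thm:affine-exact} is required.
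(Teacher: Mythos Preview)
Your reduction to the case ``$Y$ a scheme, $X$ an algebraic space'' is correct and matches the paper. The gap is in the last paragraph. After this reduction $Y$ is \emph{already} a scheme, so ``passing to an \'etale presentation of $Y$'' buys you nothing: the obstruction is that $X$ need not be a scheme, and covering $Y$ does not change that. The scheme statement you cite (essentially \cite[\S~4.2.4]{MR751966}) therefore does not apply directly, and your proposed bridge from schemes to algebraic spaces is not a bridge.

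The paper closes this gap by an induction on $\dim\cX$. Working locally on $\cY$ one may assume $\cY$, hence $\cX$, is separated; a separated finite-type algebraic space then has dense open schematic locus. Choosing a dense open $U\subseteq\cY$ with $f^{-1}(U)$ a scheme, one applies excision: the open piece $j_!j^*\R f_*K\simeq j_!\R f_*j^{\prime,*}K$ is handled by \cite[\S~4.2.4]{MR751966} (now between schemes), and the closed piece $i_*i^*\R f_*K\simeq i_*\R f_*i^{\prime,*}K$ falls to the induction hypothesis since $\dim f^{-1}(\cY\smallsetminus U)<\dim\cX$. Your direct stalkwise approach can be salvaged, but only by independently proving the cohomological-dimension bound $\Hh^r(W,F)=0$ for $r>2\dim\supp F$ when $W$ is a proper algebraic space over a separably closed field; that in turn typically goes through Chow's lemma or the same dense-schematic-locus trick, so you end up doing essentially the paper's work anyway.
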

\begin{proof}
  As in the proof of Theorem~\ref{thm:affine-exact}, choose a smooth morphism $\pi \colon Y \to \cY$ from a connected scheme of finite type over $k$ with relative dimension $d_\pi$ at all $y \in Y$.
  The fiber product $X \colonequals \cX \times_{\cY} Y$ is an algebraic space, again of constant relative dimension $d_\pi$ over $\cX$. 
  By \cite[Prop.~9.8.(i)]{MR2312554}, it therefore suffices to prove the statement in case $\cY$ is a scheme and $\cX$ an algebraic space.

  We proceed by induction on $\dim \cX$, the case $\dim \cX = 0$ being trivial.
  For the inductive step, since the assertion is local on $\cY$, we may assume that $\cY$, and hence $\cX$, is separated.
  In particular, $\cX$ has dense, open schematic locus;
  we can choose a dense, open subscheme $U \subseteq \cY$ such that $f^{-1}(U) \subset \cX$ is represented by a scheme because $f$ is closed.

  Denote by $j \colon U \hookrightarrow \cY$ and $j' \colon f^{-1}(U) \hookrightarrow \cX$ the open immersions and by $i$ and $i'$ the closed immersions of their complements with their reduced subspace structures, respectively.
  For any $K \in \pD(\cX)^{\le n}$, the induction hypothesis ensures $i_*i^*\R f_*K \simeq i_*\R f_*i^{\prime,*}K \in \pD^{\le n+d}(\cY)$.
  On the other hand, $j_!j^*\R f_*K \simeq j_!\R f_*j^{\prime,*}K \in \pD^{\le n+d}(\cY)$ by \cite[\S~4.2.4]{MR751966}.
  The assertion then follows from the long exact excision sequence for $\R f_*K$.
\end{proof}
Under stricter assumptions on the allowable loci with a given fiber dimension, more can be said about the direct image of $\F_p[\dim \cX]$.
\begin{defn}\label{defn:defect}
  Let $f \colon \cX \to \cY$ be a proper morphism between algebraic stacks of finite type over $k$ which is representable by algebraic spaces.
  Let $\abs{\cY} = \bigsqcup_{i \in I} S_i$ be a finite stratification of $\abs{\cY}$ into locally closed subspaces $S_i$.
  Let $\cY_i$ be the corresponding locally closed substacks with their reduced induced substack structure (cf.\ \cite[\href{https://stacks.math.columbia.edu/tag/06FK}{Remark~06FK}]{stacks-project}).
  Assume that $f$ has constant relative dimension $d_i$ over $\cY_i$.
  Then the \emph{defect of semi-smallness of $f$} is
  \[ r(f) \colonequals \max_{i \in I} \{ 2d_i + \dim \cY_i - \dim \cX \}. \]
\end{defn} 
\begin{exmp}
  If $f$ is the blowup of a closed subvariety of codimension $c \ge 2$ in a variety $Y$, we have $r(f) = c-2$.
\end{exmp}
\begin{lem}\label{lem:defect-exact}
  Let $f \colon \cX \to \cY$ and $\abs{\cY} = \bigsqcup_{i \in I} S_i$ be as in Definition~\ref{defn:defect}.
  Let $F$ be a constructible sheaf of $\F_p$-modules on $\cX$.
  Then $\R f_*(F[\dim \cX]) \in \pD(\cY)^{\le r(f)}$.
\end{lem}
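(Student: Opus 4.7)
The plan is to verify the stalk condition of Definition~\ref{defn:perverse-t-structure} at geometric points of a smooth cover of $\cY$, using proper base change and Artin's bound on the \'etale cohomological dimension of proper varieties.

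First I would reduce to the case of algebraic spaces. Since $f$ is representable by algebraic spaces, any smooth morphism $\pi\colon Y\to \cY$ from a connected scheme of finite type over $k$ with constant relative dimension $d_\pi$ produces a Cartesian square
\[
\begin{tikzcd}
X \arrow[r, "\pi'"] \arrow[d, "f'"] & \cX \arrow[d, "f"] \\
Y \arrow[r, "\pi"] & \cY,
\end{tikzcd}
\]
with $X$ an algebraic space and $f'$ proper. Combining Remark~\ref{rem:constant-fiber-dimension} with the proper base change isomorphism $\pi^*\R f_* \simeq \R f'_*\pi^{\prime,*}$ from \cite[Prop.~9.8.(i)]{MR2312554}, it suffices to show $\R f'_*(\pi^{\prime,*}F[\dim\cX])\in\pD(Y)^{\le r(f)+d_\pi}$. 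The pulled back strata $Y_i\colonequals\pi^{-1}(\cY_i)$ satisfy $\dim Y_i=\dim\cY_i+d_\pi$, $f'$ retains constant fiber dimension $d_i$ over $Y_i$, and $\dim X=\dim\cX+d_\pi$, so the defect invariant $r(f')$ for the stratification $\{Y_i\}$ equals $r(f)$; rewriting $F[\dim\cX]=F[\dim X][-d_\pi]$ then reduces the lemma to its analogue for $f'$ with shift $\dim X$ and target bound $r(f')$.

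For the algebraic space case I would check the stalk condition directly. Fix a geometric point $\bar y$ over $y\in Y$, and let $i$ be the index with $y\in Y_i$. By proper base change for algebraic spaces,
\[ \bigl(R^j f'_*\pi^{\prime,*}F\bigr)_{\bar y}\simeq \Hh^j\bigl(X_{\bar y},\pi^{\prime,*}F|_{X_{\bar y}}\bigr), \]
which vanishes for $j>2d_i$ by Artin's bound on the proper algebraic space $X_{\bar y}$ of dimension $d_i$. Hence the stalks of $K\colonequals \R f'_*(\pi^{\prime,*}F[\dim X])$ are concentrated in degrees $\le 2d_i-\dim X$. Since $Y_i$ is dense in its closure, $\dim(y)=\dim\overline{\{y\}}\le \dim Y_i$, whence
\[ 2d_i-\dim X \le (2d_i+\dim Y_i-\dim X)-\dim(y)\le r(f')-\dim(y), \]
which is precisely the inequality needed to place $K$ in $\pD(Y)^{\le r(f')}$.

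The main point requiring care is the bookkeeping in the reduction step, namely the identities $\dim Y_i = \dim\cY_i + d_\pi$, $\dim X = \dim\cX + d_\pi$, and $r(f')=r(f)$, together with the sign of the shift by $d_\pi$ relating $F[\dim\cX]$ and $F[\dim X]$; once these are in hand, the stalk computation is straightforward. As an alternative route one could induct on the number of strata using the excision triangle associated to an open dense stratum $U=S_{i_0}$, invoking Proposition~\ref{prop:proper-exact} to obtain $\R f_{U,*}(F[\dim\cX]|_{\cX_U})\in\pD(U)^{\le 2d_{i_0}+\dim\cY_{i_0}-\dim\cX}\subseteq\pD(U)^{\le r(f)}$ on the open part and the t-exactness of $j_!$ and $i_*$ on the closed complement; but the direct stalk argument above is shorter and I would present it.
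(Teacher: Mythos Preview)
Your proof is correct and follows essentially the same route as the paper: pull back along a smooth chart, invoke proper base change to identify the stalks of $\R f_*(F[\dim\cX])$ with cohomology of the fibers, and then use the $2d_i$ bound on the cohomological dimension of a proper algebraic space together with the defining inequality for $r(f)$. The only cosmetic difference is that you repackage the pulled-back situation as a new instance of the lemma for $f'$ (tracking $\dim X=\dim\cX+d_\pi$, $\dim Y_i=\dim\cY_i+d_\pi$, $r(f')=r(f)$), whereas the paper skips this reformulation and writes the single chain of inequalities $2d_i-\dim\cX\le r(f)-\dim\cY_i\le r(f)+d_\pi(y)-\dim(y)$ directly; your extra bookkeeping is harmless but unnecessary.
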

\begin{proof}
  Choose a smooth surjection $Y \to \cY$ from a scheme $Y$ of finite type over $k$, giving rise to a fiber square
  \[ \begin{tikzcd}
      X \colonequals \cX \times_\cY Y \arrow[r,"f'"] \arrow[d,"\pi'"] & Y \arrow[d,"\pi"] \\
      \cX \arrow[r,"f"] & \cY.
  \end{tikzcd} \]
  Let $y \in Y$.
  Then
  \[ \bigl(\pi^*\R f_*(F[\dim\cX])\bigr)_y \simeq \bigl(\R f'_*\pi^{\prime,*}(F[\dim\cX])\bigr)_y \simeq \R\Gamma(X_y,\pi^{\prime,*}F[\dim\cX]) \]
  by proper base change.
  If $\pi(y) \in S_i$, we have $\dim X_y = d_i$ and hence $\cH^j\bigl(\bigl(\pi^*\R f_*(F[\dim\cX])\bigr)_y\bigr) \simeq \Hh^{j+\dim\cX}(X_y,\pi^{\prime,*}F) = 0$ for all $j > 2d_i - \dim\cX$.
  The assertion thus follows from the inequality
  \[ 2d_i - \dim\cX \le r(f) - \dim\cY_i \le r(f) + d_\pi(y) - \dim(y). \qedhere \]
\end{proof}
\begin{rem}
  In fact, a combination of Lemma~\ref{lem:defect-exact} and Proposition~\ref{prop:proper-exact} shows that $\R f_*(F[\dim\cX]) \in \pD(\cY)^{\le r}$, where $r \colonequals \min \{ r(f), 2 \dim \supp F - \dim\cX \}$.
  On the other hand, the bound of Lemma~\ref{lem:defect-exact} is sharp in general.
  For example, choose $i \in I$ such that $\dim \cY_i = r(f) + \dim\cX - 2d_i$.
  Let $y \in \pi^{-1}(S_i)$ be the generic point of an irreducible component such that $\dim \cY_i = \dim_{\pi(y)}\cY_i$.
  Then equality holds at every step of the last inequality in the proof and thus $\R f_*(\F_p[\dim \cX]) \notin \pD(\cY)^{\le r(f)-1}$.
\end{rem}

\section{Perfectoid Artin vanishing}\label{sect:perfectoid-Artin}

The results of this section are inspired by the following statement due to Artin and Grothendieck.
\begin{thm}[{\cite[Cor.~XIV.3.2]{SGA4}}]\label{thm:Artin-vanishing}
  Suppose $X$ is an affine algebraic variety over a separably closed field.
  Let $F$ be a torsion abelian \'etale sheaf on $X$.
  Then $\Hh^n_{\et}(X,F) = 0$ for all $n > \dim X$.
\end{thm}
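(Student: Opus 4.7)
The plan is to follow the classical Artin--Grothendieck argument by induction on $d \colonequals \dim X$, after some standard reductions.

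First, I would reduce to constructible sheaves. Since $X$ is quasi-compact and quasi-separated, $\Hh^n_{\et}(X, -)$ commutes with filtered colimits, and every torsion abelian sheaf is the filtered colimit of its constructible subsheaves. Then, by Noetherian induction on $\supp(F)$ and devissage using the short exact sequences $0 \to j_!j^*F \to F \to i_*i^*F \to 0$ attached to immersions of reduced closed subschemes and their open complements, I would reduce to the case $F = j_!\Lambda$ where $j \colon U \hookrightarrow X$ is an open immersion with $U$ smooth and equidimensional of some dimension $e \le d$, and $\Lambda$ is a locally constant constructible sheaf of $\Z/n\Z$-modules on $U$; the pushforward step along closed immersions is harmless since closed subschemes of affine schemes are again affine.

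The base case $d = 0$ is immediate. The base case $d = 1$ is the crucial input: for an affine curve $X$ over a separably closed field one has $\Hh^n_{\et}(X,F) = 0$ for $n \ge 2$ and any torsion $F$. This follows from Tsen's theorem, which asserts that the function field of a curve over a separably closed field has cohomological dimension $\le 1$, combined with the Leray spectral sequence for the inclusion of the generic point and a local analysis at the closed points via the strict henselizations.

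For the inductive step, I would use Artin's existence theorem for elementary (good) neighborhoods: after possibly replacing $X$ by an affine \'etale cover, one can construct a morphism $f \colon X \to S$ to an affine variety $S$ of dimension $d-1$ whose fibers are smooth affine curves. Applying the Leray spectral sequence
\[ E^{p,q}_2 = \Hh^p_{\et}(S, \Rr^q f_*F) \Rightarrow \Hh^{p+q}_{\et}(X,F), \]
the base case (applied fiber-by-fiber, after verifying that $\Rr^q f_*F$ is constructible and computing its stalks via proper base change on a compactification of $f$) gives $\Rr^q f_*F = 0$ for $q > 1$, while the inductive hypothesis applied to the affine $(d-1)$-dimensional $S$ gives $\Hh^p_{\et}(S, \Rr^q f_*F) = 0$ for $p > d-1$. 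Together these force $\Hh^n_{\et}(X,F) = 0$ for $n > d$.

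The main obstacles are precisely the two geometric/arithmetic inputs: Tsen's theorem for the base case (which is where the hypothesis that the ground field be separably closed is essential), and the existence of elementary fibrations for the inductive step, which requires replacing $X$ by a suitable \'etale cover and a Bertini-type argument on a projective compactification. Once these are granted, the spectral sequence bookkeeping and the reductions to constructible, then to $j_!$-extensions of local systems, are formal.
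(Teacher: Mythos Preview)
The paper does not prove this statement; it is simply quoted with a citation to SGA4 as motivation for the perfectoid analogue that follows, so there is no argument in the paper to compare against. Your outline is essentially the classical Artin--Grothendieck strategy from SGA4~XIV.

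One step deserves more care: ``after possibly replacing $X$ by an affine \'etale cover'' is not innocent, since passing to an \'etale cover $X' \to X$ does not directly control $\Hh^n_{\et}(X,F)$ --- the \v{C}ech spectral sequence for the cover shifts degrees. The usual resolution is to prove instead the \emph{relative} statement (SGA4~XIV.3.1: for an affine morphism $f \colon X \to S$ of finite type with fibers of dimension $\le d$ and $F$ torsion, one has $\Rr^q f_* F = 0$ for $q > d$), which genuinely is \'etale-local on the base $S$; one then reduces to $S$ strictly henselian before producing the elementary fibration. Alternatively, after your d\'evissage to $F = j_! \Lambda$ with $j \colon U \hookrightarrow X$ and $U$ smooth, you may keep shrinking $U$ --- its complement in $X$ is closed, affine, of strictly smaller dimension, and hence handled by the inductive hypothesis --- until $U$ itself carries a global elementary fibration, which avoids covers altogether. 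Either route closes the gap; the rest of your outline is sound.
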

When $X$ is proper, Theorem~\ref{thm:Artin-vanishing} fails; nonetheless, we can prove a similar version for certain inverse systems of varieties.
From here on, fix a complete, algebraically closed extension $C$ of $\Q_p$.
Let $\cO_C$ be its ring of integers and $\varpi$ be a pseudouniformizer.
We will work in the following setting.
\begin{sit}\label{sit:inverse-system}
  Let $I$ be a cofiltered category with final object $0 \in I$.
  Let $X_i$, $i \in I$, be a cofiltered inverse system of quasi-compact and quasi-separated rigid spaces over $C$ of dimension $d$ with finite transition maps $\pi_{ji} \colon X_j \to X_i$.
  Assume that the diamond $\lim_i X^\lozenge_i$ is representable by a perfectoid space $X$.
\end{sit}
In order to have a good theory of \'etale cohomology, we will always identify rigid spaces with their associated adic spaces \cite[Prop.~4.3]{MR1306024}.
In the setting of this paper, where we work exclusively over the fixed perfectoid ground field $C$, a diamond is, in short, a pro-\'etale sheaf on the category of perfectoid spaces over $C$ which is a quotient of a perfectoid space by a pro-\'etale equivalence relation (akin to the definition of algebraic spaces).
There is a ``diamondification'' functor which associates to any analytic adic space $Y$ over $C$ a diamond $Y^\lozenge$.
We refer to \cite{scholze2018berkeley} for an introduction to these ideas and to \cite{scholze2017etale} for a comprehensive reference.
Later in this paper, we will mostly be interested in the special case where the $X_i$ are algebraic varieties and there exists a perfectoid space $X$ such that $X \sim \lim_i X_i$ in the sense of \cite[Def.~2.4.1]{MR3272049}.
\begin{thm}[Perfectoid Artin vanishing]\label{thm:perfectoid-Artin}
  In Situation~\ref{sit:inverse-system}, let $F_0$ be a Zariski-constructible sheaf of $\F_p$-modules on $X_0$ and $F_i \colonequals \pi^*_{i0}F_0$ for all $i \in I$.
  Assume that $X_0$ is proper.
  Then for all $n > d$
  \[ \colim_i \Hh^n_{\et}(X_i,F_i) = 0. \]
\end{thm}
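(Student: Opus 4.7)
The plan is to prove the theorem by induction on $\dim \supp F_0$, performing a standard dévissage to isolate the genuinely ``perfectoid'' case: the extension by zero of a local system from a proper Zariski-open subspace.

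For the dévissage, by Zariski-constructibility there is a Zariski-dense open $j \colon U \hookrightarrow X_0$ on which $F_0|_U$ is a local system $L$, with closed complement $\iota \colon Z \hookrightarrow X_0$ of strictly smaller dimension than $\supp F_0$. The excision triangle
\[ j_!L \to F_0 \to \iota_*(F_0|_Z) \to \]
pulls back along the whole tower, and since filtered colimits are exact, passing to $\colim_i \Hh^n_{\et}(X_i,-)$ splits the problem into two pieces. For the $\iota_*$-term, set $Z_i \colonequals \pi_{i0}^{-1}(Z)$; these form a cofiltered inverse system of proper rigid subspaces with finite transition maps whose diamond limit $\lim_i Z^\lozenge_i$ is the Zariski-closed subdiamond of $X$ cut out by the ideal of $Z$, which is again representable by a perfectoid space. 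Since $\dim Z<d$ and $\Hh^n_{\et}(X_i,\iota_*(F_0|_Z))=\Hh^n_{\et}(Z_i,F_0|_{Z_i})$, the induction hypothesis disposes of this piece, reducing the claim to the case $F_0 = j_!L$.

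When $U = X_0$ the sheaf $j_!L = L$ is a local system on a proper rigid space and the perfectoid limit $X$ is itself proper; here I would invoke Scholze's primitive comparison theorem (\cite[\S~4]{MR3418533}) together with almost mathematics on the pair $(X,\cO^+_X/\varpi)$ to deduce $\colim_i \Hh^n_{\et}(X_i,L) = 0$ for $n > d$. The remaining — and genuinely new — case is $U \subsetneq X_0$, which is precisely the assertion of the key Lemma~\ref{lem:perfectoid-Artin-key}.

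To handle this last case, I would first trivialize $L$ by passing to a finite étale cover $\widetilde{U}\to U$ and to the normalization $\widetilde{X}_0$ of $X_0$ in $\widetilde{U}$, reducing via a trace/summand argument to a vanishing statement for $j_!\F_p$ on a new inverse system satisfying the hypotheses of the theorem. One then tries to read this off the perfectoid limit by computing integrally: show that the almost $\cO^+_X/\varpi$-cohomology of the extension by zero of $\cO^+_X/\varpi$ from the perfectoid preimage of $U$ vanishes in degrees $> d$, via an almost purity / Riemann-extension analysis along the Zariski-closed complement $X\smallsetminus U^\lozenge$. This boundary analysis — controlling sections of $\cO^+$ with prescribed vanishing along a Zariski-closed subset of a perfectoid space — is what I expect to be the decisive obstacle, and it is here that the perfectoid/prismatic input from \cite{bhatt2019prisms} should enter, supplying both the needed almost purity along the boundary and the vanishing of the resulting almost-coherent cohomology above the dimension.
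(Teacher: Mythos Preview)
Your d\'evissage is broadly right, but there are two genuine gaps.

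First, your induction scheme is incomplete. You induct only on $\dim\supp F_0$, whereas the paper runs a \emph{double} induction: ascending on $d$ and descending on $n$, with base case $n>2d$ from Huber's dimension bound. The descending leg is essential at exactly the step you gloss over as a ``trace/summand argument'': with $\F_p$-coefficients the trace map $j_{0,!}f_{0,*}f_0^*L \twoheadrightarrow j_{0,!}L$ is only a surjection, not a splitting, and its kernel $K$ is Zariski-constructible with the \emph{same} support as $j_{0,!}L$. Your dimension induction cannot touch it; the paper instead kills $\colim_k\Hh^{n+1}_{\et}(X_k,\pi_{k0}^*K)$ by invoking the already-established case $(V_{d,n+1})$ one degree higher.

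Second, you misread both the key lemma and where the prismatic input enters. Lemma~\ref{lem:perfectoid-Artin-key} does \emph{not} treat $j_!L$ for $U\subsetneq X_0$; it asserts that for any finite $Y_0\to X_0$ one has $\colim_i\Hh^n_{\et}(Y_i,\F_p)=0$ for $n>d$, with constant coefficients. The Bhatt--Scholze perfectoidization from \cite{bhatt2019prisms} is used precisely there, via Proposition~\ref{prop:perfectoidization}, to guarantee that $Y_0^\lozenge\times_{X_0^\lozenge}X^\lozenge$ is representable by a perfectoid space, after which primitive comparison plus the analytic cohomological-dimension bound finish. The paper then handles $\bar\jmath_{k,!}\F_p^{\oplus r}$ on the cover $Y_k$ not by your proposed direct analysis of $j_!(\cO^+/\varpi)$, but by one further excision $0\to\bar\jmath_{k,!}\F_p^{\oplus r}\to\F_p^{\oplus r}\to\bar\imath_{k,*}\F_p^{\oplus r}\to 0$ and two applications of Lemma~\ref{lem:perfectoid-Artin-key}: once to $Y_k\to X_k$ in degree $n$, and once to $A_k\to Z_k$ over the lower-dimensional system $Z_k$ in degree $n-1$. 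This sidesteps entirely the ``almost purity / Riemann-extension'' boundary analysis you flag as the decisive obstacle.
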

An \'etale sheaf $F_0$ of $\F_p$-modules on $X_0$ is Zariski-constructible if there is a finite stratification $X_0 = \bigsqcup X_{0,\alpha}$ such that each stratum is locally closed in the Zariski topology of $X_0$ and the restrictions $\restr{F_0}{X_{0,\alpha}}$ are finite locally constant.
For example, every constructible sheaf of $\F_p$-modules on a scheme of finite type over $C$ (in the usual algebro-geometric sense) gives rise to a Zariski-constructible sheaf on its analytification.
We refer to \cite{hansen-vanishing} for a detailed account in the general rigid-analytic setting.
When $F_0 = j_{0,!}\F_p$ for some dense, Zariski-open $j_0 \colon U_0 \hookrightarrow X_0$, Theorem~\ref{thm:perfectoid-Artin} was proven in \cite[\S~4.2]{MR3418533}.

\subsection{Examples and counterexamples}

Before delving into the proof of Theorem~\ref{thm:perfectoid-Artin}, we give some examples and applications.
\begin{exmp}\label{exmp:proj-perfectoid}
  When $X_0 \subseteq \PP^N_C$ is a projective variety, we obtain a natural inverse system with the required properties by putting $X_i \colonequals X_0 \times_{\PP^N_C,\Phi^i} \PP^N_C$, $i \in \Z_{\ge 0}$, where $\Phi$ is the ``mock Frobenius''
  \[ \Phi \colon \PP^N_C \to \PP^N_C, \quad [x_0:\dotsb:x_N] \mapsto [x^p_0:\dotsb:x^p_N]. \]
  In this setting, Esnault \cite[Thm.~5.1]{esnault2018cohomological} gave a proof of Theorem~\ref{thm:perfectoid-Artin} that does not require perfectoid techniques.
  In fact, her argument establishes this special case over any algebraically closed field of characteristic not equal to $p$.
  It seems interesting to investigate whether Theorem~\ref{thm:perfectoid-Artin} holds true over more general algebraically closed, non-archimedean fields as well.
\end{exmp}
\begin{exmp}
  Let $A$ be an abelian variety, or more generally an abeloid variety, over $C$.
  Consider the cofiltered inverse system
  \[ \dotsb \xrightarrow{[p]} A \xrightarrow{[p]} A \xrightarrow{[p]} A \]
  in which the transition maps are multiplication by $p$.
  In \cite{blakestad2018perfectoid}, the authors construct a perfectoid space $A_\infty$ with $A_\infty \sim \lim_{[p]} A$ and thus $A^\lozenge_\infty \simeq \lim_{[p]} A^\lozenge$.
  In particular, this produces interesting non-algebraic examples.
\end{exmp}
\begin{exmp}\label{exmp:Ag-tor}
  Fix, once and for all, a smooth, $\GL_g(\Z)$-admissible polyhedral decomposition of the cone of positive semi-definite quadratic forms on $\R^g$ whose null space is defined over $\Q$; see \cite[Def.~IV.2.2, IV.2.3]{MR1083353}.
  Let $\overline{\cA_g}[m]$ be the toroidal compactification of the moduli space $\cA_g[m]$ of principally polarized abelian varieties of dimension $g$ over $C$ with full level-$m$ structure which is determined by this decomposition.
  By \cite[Thm.~IV.6.7]{MR1083353}, $\overline{\cA_g}[m]$ is a smooth and proper algebraic stack in which $\overline{\cA_g}[m] \smallsetminus \cA_g[m]$ is a normal crossings divisor (see Definition~\ref{defn:nc-divisor}).
  If $m \ge 3$, it is an algebraic space \cite[Cor.~IV.6.9]{MR1083353}, and even a projective variety under certain convexity conditions on the decomposition \cite[\S~V.5]{MR1083353}.
  
  We obtain a cofiltered inverse system $X_i \colonequals \overline{\cA_g}[p^i]$, $i \in \Z_{\ge 0}$, of projective varieties with finite transition maps ``forgetting level structure.''
  Work of Scholze \cite{MR3418533} and Pilloni--Stroh \cite[Thm.~0.4]{MR3512528} constructs a perfectoid space $X \sim \lim_i X^{\ad}_i$.
  Thus, Theorem~\ref{thm:perfectoid-Artin} applies to this inverse system.
\end{exmp}
For later purposes, it will be beneficial to formulate Theorem~\ref{thm:perfectoid-Artin} for arbitrary systems of semiperverse sheaves.
We only consider the case where the $X_i$ are (the analytifications of) proper schemes over $C$, in which we can use the notation set up in \S~\ref{sect:perverse-vanishing}.
\begin{cor}\label{cor:perfectoid-Artin-perverse}
  Let $m \in \Z$ and $K_i \in \pD(X_i)^{\le m}$ for all $i \in I$.
  Assume that for every morphism $i \to j$ of $I$, there exists $\varphi_{ij} \colon \pi^*_{ij}K_j \to K_i$ such that $\varphi_{ik} = \varphi_{ij} \circ \pi^*_{ij}\varphi_{jk}$ for all $i \to j \to k$.
  Then
  \[ \colim_i \Hh^n_{\et}(X_i,K_i) = 0 \]
  for all $n > m$.
\end{cor}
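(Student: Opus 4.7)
The plan is to reduce to Theorem~\ref{thm:perfectoid-Artin} via the standard hypercohomology spectral sequence together with the characterization of semiperversity in terms of support dimensions. Since pullback of \'etale sheaves is exact, the maps $\varphi_{ij}$ descend to a compatible system of maps $\pi_{ij}^*\cH^q(K_j) \to \cH^q(K_i)$ on ordinary cohomology sheaves, yielding a morphism of hypercohomology spectral sequences
\[ E_2^{p,q}(i) = \Hh^p_{\et}(X_i, \cH^q(K_i)) \Rightarrow \Hh^{p+q}_{\et}(X_i, K_i). \]
Taking the filtered colimit over $i$ (which is exact) reduces the claim to showing $\colim_i \Hh^p_{\et}(X_i, \cH^q(K_i)) = 0$ for all $p + q > m$.

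Unwinding Definition~\ref{defn:perverse-t-structure} with $\pi = \id$ shows that $K_i \in \pD(X_i)^{\le m}$ is equivalent to $\dim \supp \cH^q(K_i) \le m - q$ for every $q$, so each $\cH^q(K_i)$ is extended by zero from a Zariski-closed subspace of $X_i$ of dimension at most $m - q$.

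To conclude, I would use a cofinality argument, since the supports $\supp \cH^q(K_i) \subseteq X_i$ need not form a pulled-back subsystem (the $\varphi_{ij}$ are arbitrary morphisms, not isomorphisms). Every class in $\colim_i \Hh^p_{\et}(X_i, \cH^q(K_i))$ is represented by some $\alpha \in \Hh^p_{\et}(X_{i_0}, \cH^q(K_{i_0}))$ at a finite stage $i_0 \in I$, and its transition to $\Hh^p_{\et}(X_i, \cH^q(K_i))$ factors through the pullback $\Hh^p_{\et}(X_i, \pi_{i,i_0}^*\cH^q(K_{i_0}))$ via the map $\pi_{i,i_0}^*\cH^q(K_{i_0}) \to \cH^q(K_i)$. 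It therefore suffices to show $\colim_{i \in I_{/i_0}}\Hh^p_{\et}(X_i, \pi_{i,i_0}^*\cH^q(K_{i_0})) = 0$ for $p > m - q$. Letting $Z \subseteq X_{i_0}$ denote the Zariski-closed support of $\cH^q(K_{i_0})$ and $Z_i \colonequals \pi_{i,i_0}^{-1}(Z)$, this equals $\colim_{i \in I_{/i_0}} \Hh^p_{\et}(Z_i, \cH^q(K_{i_0})|_Z)$, to which Theorem~\ref{thm:perfectoid-Artin} applies: the coslice $I_{/i_0}$ is cofiltered with final object $\id_{i_0}$, the $Z_i$ are proper of dimension at most $m - q$, and their inverse limit is the Zariski-closed subspace $\pi_{i_0}^{-1}(Z)$ of the perfectoid space $X$, and hence again similar to a perfectoid space.

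I expect the last step to require the most care: verifying that the pulled-back subsystem $\{Z_i\}_{i \in I_{/i_0}}$ of Zariski-closed subspaces really satisfies the hypotheses of Situation~\ref{sit:inverse-system}, in particular that its inverse limit is representable by a perfectoid space. This should follow from standard stability results for perfectoid spaces under Zariski-closed immersions cut out by coherent ideals pulled back from a finite-type scheme.
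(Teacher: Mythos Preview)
Your proposal is correct and takes essentially the same approach as the paper: hypercohomology spectral sequence plus a cofinality reduction to sheaves pulled back from a fixed stage, then Theorem~\ref{thm:perfectoid-Artin} applied on the closed support. The paper packages your cofinality step as Lemma~\ref{lem:staircase-colimit} (applied at the level of hypercohomology before passing to the spectral sequence rather than after), and the perfectoidness of the Zariski-closed subsystem you flag at the end is exactly Proposition~\ref{prop:perfectoidization}.
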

\begin{proof}
  By Lemma~\ref{lem:staircase-colimit} below applied to the $\Arr(I^{\op})$-shaped diagram $H_{j \to i} \colonequals \Hh^n_{\et}(X_i,\pi^*_{ij}K_j)$ with transition maps induced by the natural pullback morphisms and the $\varphi_{ij}$, we may assume that $K_i \simeq \pi^*_{i0}K_0$ for all $i \in I$.
  Consider the hypercohomology spectral sequences
  \[ \Hh^r_{\et}\bigl(X_i,\cH^s(K_i)\bigr) \Longrightarrow \Hh^{r+s}_{\et}(X_i,K_i), \]
  which are compatible in $i \in I$.
  By the ``colimit lemma'' (cf.\ e.g.\ \cite[Prop.~3.3]{MR1466977}), it suffices to check that
  \[ \colim_i \Hh^r_{\et}\bigl(X_i,\cH^s(K_i)\bigr) = 0 \]
  whenever $r+s > m$.
  By Theorem~\ref{thm:perfectoid-Artin} and Proposition~\ref{prop:perfectoidization} below, the colimit is $0$ when $r > \dim\supp\cH^s(K_0)$, so the statement follows from the semiperversity condition $\dim\supp\cH^s(K_0) \le m-s$.
\end{proof}
It remains to prove Lemma~\ref{lem:staircase-colimit}.
Recall that the arrow category of a category $J$ is the category $\Fun(\{0 \to 1\},J)$ of functors from the interval category $\{0 \to 1\}$ to $J$ whose objects are the morphisms of $J$.
In \S~\ref{sect:moduli-curves-vanishing}, we will take $J = \Z_{\ge 0}$;
in that case, $\Arr(J)$ can be identified with the ``staircase'' diagram $\bigl\{ (j,k) \in (\Z_{\ge 0})^2 \suchthat j \le k \bigr\}$, considered as a partially ordered set via the product order.
\begin{lem}\label{lem:staircase-colimit}
  Let $\cC$ be a category that admits all filtered colimits.
  Let $J$ be a filtered category with associated arrow category $\Arr(J)$ and $H \colon \Arr(J) \to \cC$ be an $\Arr(J)$-shaped diagram of $\cC$.
  Then
  \[ \colim_{j \in J} H_{(\id \colon j \to j)} \simeq \colim_{j \in J} \colim_{(j \to k) \in \Arr(J)} H_{(j \to k)}. \]
\end{lem}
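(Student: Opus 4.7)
The plan is to show that both the left- and right-hand sides are canonically isomorphic to $\colim_{\Arr(J)} H$, from which the asserted identity follows. Both identifications will rest on cofinality statements that use the filteredness of $J$ in an essential way.

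For the left-hand side, I would consider the diagonal functor $\Delta \colon J \to \Arr(J)$, $j \mapsto \id_j$. Given an object $f \colon a \to b$ of $\Arr(J)$, a morphism $f \to \id_j$ in $\Arr(J)$ is a commutative square whose bottom edge is $\id_j$, and such a square is uniquely determined by its right edge $b \to j$; hence the comma category $(f \downarrow \Delta)$ is equivalent to the coslice $b/J$. Since $J$ is filtered, so is $b/J$, and in particular it is non-empty and connected. The standard cofinality criterion then shows that $\Delta$ is cofinal, which gives $\colim_{j \in J} H(\id_j) \simeq \colim_{\Arr(J)} H$.

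For the right-hand side, I would factor the computation through the source projection $s \colon \Arr(J) \to J$, $(f \colon a \to b) \mapsto a$, using the general identity $\colim_{\Arr(J)} H \simeq \colim_{j \in J} (\Lan_s H)(j)$ together with the pointwise formula $(\Lan_s H)(j) \simeq \colim_{(s \downarrow j)} H$. The comma category $(s \downarrow j)$ has objects $(f \colon a \to b, \alpha \colon a \to j)$, and it contains the coslice $j/J$ as a full subcategory via $(g \colon j \to k) \mapsto (g, \id_j)$. A morphism from $(f, \alpha)$ to $(g, \id_j)$ in $(s \downarrow j)$ unpacks to a cocone on the span $j \xleftarrow{\alpha} a \xrightarrow{f} b$ in $J$, and such cocones form a filtered category because $J$ is. Thus $j/J \hookrightarrow (s \downarrow j)$ is cofinal, $(\Lan_s H)(j) \simeq \colim_{g \in j/J} H(g)$, and the outer colimit identifies the right-hand side with $\colim_{\Arr(J)} H$.

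The main technical point is the second cofinality statement: the source projection $s$ is not in general a coCartesian fibration, so the Fubini-type iteration of the colimit is not structural but depends on filteredness to supply cocones inside $(s \downarrow j)$. Once both cofinalities are verified, the lemma is immediate from the two identifications with $\colim_{\Arr(J)} H$.
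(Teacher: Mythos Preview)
Your proof is correct and follows essentially the same route as the paper's: both identify each side with $\colim_{\Arr(J)} H$, the left via cofinality of the diagonal $J \hookrightarrow \Arr(J)$ and the right via the factorization $\Lan_{G \circ s} \simeq \Lan_G \circ \Lan_s$ through the source projection. You are in fact more careful than the paper on the second step: the paper simply asserts that $\Lan_s H$ is the diagram of fiber-wise colimits ``by the universal property of left Kan extensions,'' whereas you correctly isolate and verify the cofinality of $j/J \hookrightarrow (s \downarrow j)$ that justifies this identification when $J$ is filtered (the source projection not being an opfibration in general).
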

\begin{proof}
  Let $F \colon \Arr(J) \to J,\, (j \to k) \mapsto j$ be the projection onto the source and $G \colon J \to *$ be the functor to the terminal category.
  Since the fully faithful ``diagonal'' subcategory $J \subset \Arr(J)$ consisting of all identity morphisms $(\id \colon j \to j)$ is cofinal, we have $\colim_{j \in J} H_{(\id \colon j \to j)} \simeq \colim_{\Arr(J)} H$.
  However, we can compute $\colim_{\Arr(J)} H$ another way: it is the left Kan extension $\Lan_{G \circ F} H$ of $H$ along $G \circ F$, which is naturally isomorphic to $\Lan_G \Lan_F H$.
  By the universal property of left Kan extensions, the functor $\Lan_F H$ is the $J$-shaped diagram $\bigl(\colim_{(j \to k) \in \Arr(J)} H_{(j \to k)}\bigr)_{j \in J}$ of colimits with fixed source, hence
  \[ \Lan_G \Lan_F H \simeq \Lan_G \Bigl(\colim_{(j \to k) \in \Arr(J)} H_{(j \to k)}\Bigr)_{j \in J} \simeq \colim_{j \in J} \colim_{(j \to k) \in \Arr(J)} H_{(j \to k)}. \qedhere \]
\end{proof}
We conclude with two further examples to indicate the limits of our methods.
\begin{exmp}\label{exmp:no-vanishing-generically-finite}
  We work in Situation~\ref{sit:inverse-system} and assume that $X_0$ is proper.
  If $f_0 \colon Y_0 \to X_0$ is a finite morphism and $Y_i \colonequals Y_0 \times_{X_0} X_i$, Theorem~\ref{thm:perfectoid-Artin} applied to $f_{0,*}\F_p$ on $X_0$ (which is Zariski-constructible by \cite[Prop.~2.3]{hansen-vanishing}) shows that $\colim_i \Hh^n_{\et}(Y_i,\F_p) = 0$ for all $n > d$.
  In fact, this will be established independently in Lemma~\ref{lem:perfectoid-Artin-key} as a key step toward the proof of Theorem~\ref{thm:perfectoid-Artin}.
  When $f_0$ is only required to be generically finite, the statement is already false for the blowup of $\PP^d_C$ in a point.
  
  More precisely, let $X_i \colonequals \PP^d_C$, $i \in \Z_{\ge 0}$, be the inverse system whose transition maps are the mock Frobenii from Example~\ref{exmp:proj-perfectoid}.
  Let $Q \colonequals [1:1:\dotsc:1]$ and $Y_0 \colonequals \Bl_Q \PP^d_C \xrightarrow{f_0} \PP^d_C$.
  Since blowing up commutes with flat base change \cite[\href{https://stacks.math.columbia.edu/tag/0805}{Lem.~0805}]{stacks-project}, $Y_i$ is the blowup of $\PP^d_C$ in the points $Q_{i1},\dotsc,Q_{ip^{id}}$ whose coordinates are all $p^i$-th roots of unity.
  Let $E$ be the exceptional divisor in $Y_0$ and $E_{i1},\dotsc,E_{ip^{id}}$ the exceptional divisors in $Y_i$.

  On cohomology, we have
  \[ \Hh^{2d-2}_{\et}(Y_i,\F_p) = \Hh^{2d-2}_{\et}(\PP^d_C,\F_p) \oplus \bigoplus_j \Hh^{2d-4}_{\et}(E_{ij},\F_p), \]
  where the summands coming from the $E_{ij}$ are generated by the pushforwards of $\cc_1\bigl(\cO_{E_{ij}}(1)\bigr)^{d-2}$ along the inclusions $E_{ij} \hookrightarrow X_i$.
  Moreover, the pullback of $\cc_1\bigl(\cO_E(1)\bigr)$ along $Y_i \to Y_0$ is $\sum_j \cc_1\bigl(\cO_{E_{ij}}(1)\bigr)$ and different Chern classes in the sum intersect to $0$, so $\cc_1\bigl(\cO_E(1)\bigr)^{d-2}$ pulls back to $\sum_j \cc_1\bigl(\cO_{E_{ij}}(1)\bigr)^{d-2}$.
  In other words, the transition map
  \[ \Hh^{2d-2}_{\et}(\PP^d_C,\F_p) \oplus \Hh^{2d-4}_{\et}(E,\F_p) \to \Hh^{2d-2}(\PP^d_C,\F_p) \oplus \bigoplus_j \Hh^{2d-4}_{\et}(E_{ij},\F_p) \]
  is multiplication by $p^i$ on the summand $H^{2d-2}_{\et}(\PP^d_C,\F_p)$, but the diagonal map on $\Hh^{2d-4}_{\et}(E,\F_p)$.
  In particular, $\colim_i \Hh^{2d-2}_{\et}(Y_i,\F_p) \neq 0$.
\end{exmp}
The next example shows why Theorem~\ref{thm:perfectoid-Artin} does not hold for more general classes of constructible sheaves which can be found in \cite{MR1734903}.
\begin{exmp}
  Let $X_i \colonequals \PP^1_{\C_p}$, $i \in \Z_{\ge 0}$, with the transition maps from Example~\ref{exmp:proj-perfectoid}.
  Let $j_i \colonequals U_i \hookrightarrow X_i$ be the closed unit disc;
  in particular, $U_i \simeq U_0 \times_{X_0} X_i$.
  We claim that for $F_0 \colonequals j_{0,!}\F_p$, we have $\colim_i \Hh^2_{\et}(X_i,F_i) \neq 0$.

  Let $X = \PP^{1,\perf}_{\C_p}$ and $j \colon U \hookrightarrow X$ be the perfectoid inverse limit of the system $X_i$ and $U_i$, respectively.
  Likewise, let $Y_i \colonequals \PP^1_{\C^\flat_p}$, $i \in \Z_{\ge 0}$, be the inverse system of projective lines over $\C^\flat_p$ with relative Frobenius as transition maps, $k_i \colon V_i \hookrightarrow Y_i$ the closed unit discs, and $Y \simeq \PP^{1,\perf}_{\C^\flat_p}$ and $k \colon V \hookrightarrow Y$ their perfectoid inverse limits.
  The tilting equivalence \cite[Thm.~7.12]{MR3090258} \cite[Cor.~8.3.6]{MR3379653} and \cite[Thm.~7.17, Cor.~7.19]{MR3090258} yield a commutative diagram of topoi
  \[ \begin{tikzcd}
      V^\sim_{0,\et} \arrow[d,hook,"k_0"] & V^\sim_{\et} \arrow[l,"\sim"'] \arrow[r,phantom,"\simeq"] \arrow[d,hook,"k"] &[-1.2em] U^\sim_{\et} \arrow[r,phantom,"\simeq"] \arrow[d,hook,"j"] &[-1.2em] \lim_i U^\sim_{i,\et} \arrow[r] & U^\sim_{0,\et} \arrow[d,hook,"j_0"] \\
      Y^\sim_{0,\et} & Y^\sim_{\et} \arrow[l,"\sim"'] \arrow[r,phantom,"\simeq"] & X^\sim_{\et} \arrow[r,phantom,"\simeq"] & \lim_i X^\sim_{i,\et} \arrow[r] & X^\sim_{0,\et}
  \end{tikzcd} \]
  and thus via base change \cite[Lem.~XVII.5.1.2]{SGA4} the identification
  \[ \colim_i \Hh^2_{\et}(X_i,F_i) \simeq \Hh^2_{\et}(X,j_!\F_p) \simeq \Hh^2_{\et}(Y,k_!\F_p) \simeq \Hh^2_{\et}(Y_0,k_{0,!}\F_p). \]

  Now we can proceed similarly to \cite[\S~0.2]{MR1734903}.
  We have a short exact sequence of \'etale sheaves on $Y_0 = \PP^1_{\C^\flat_p}$
  \begin{equation}\label{eqn:cohomology-disc-projective-line}
    0 \to k_{0,!}\F_p \to \F_p \to \iota_*\F_p \to 0,
  \end{equation}
  where $\iota \colon W^\sim_{\et,Y_0} \hookrightarrow Y^\sim_{0,\et}$ is the inclusion of the closed subtopos complementary to $V^\sim_{0,\et}$ \cite[\S~IV.9.3]{SGA4}, which corresponds to $W \colonequals \{ x \in Y_0 \suchthat \abs{x} > 1 \}$ (in the language of \cite{MR1734903}, the \'etale topos of the pseudo-adic space $(Y_0,W)$).
  By GAGA for rigid \'etale cohomology in the proper case (cf. \cite[Cor.~6.18]{bhatt2018excision} or \cite[Thm.~3.2.10]{MR1734903}), the Artin--Schreier sequence, and \cite[Cor.~X.5.2]{SGA4}, $\Hh^1_{\et}(Y_0,\F_p) \simeq \Hh^2_{\et}(Y_0,\F_p) \simeq 0$.
  Thus, the long exact sequence in cohomology for (\ref{eqn:cohomology-disc-projective-line}) gives an isomorphism
  \[ \Hh^1(W_{\et,Y_0},\F_p) \xrightarrow{\sim} \Hh^2_{\et}(Y_0,k_{0,!}\F_p). \]

  Let $V \colonequals \cO_{\C^\flat_p}$ and $V\{T\}$ be the henselization of $V[T]_{(\fm_V,T)}$.
  Choose a pseudouniformizer $\varpi^\flat$ of $V$.
  As in \cite[Ex.~0.2.5]{MR1734903}, we can conclude from \cite[Thm.~0.2.4/Thm.~3.2.1]{MR1734903} that
  \[ \Hh^1(W_{\et,Y_0},\F_p) \simeq \Hh^1_{\et}\bigl(\Spec V\{T\}\bigl[\tfrac{1}{\varpi^\flat}\bigr],\F_p\bigr). \]
  Next, we show that the \'etale $\F_p$-torsor $V\{T\}\bigl[\tfrac{1}{\varpi^\flat}\bigr] \to \bigl(V\{T\}\bigl[\tfrac{1}{\varpi^\flat}\bigr]\bigr)[S]/\bigl(S^p - S - \tfrac{T}{\varpi^\flat}\bigr)$ is non-trivial;
  this implies $\Hh^1_{\et}\bigl(\Spec V\{T\}\bigl[\tfrac{1}{\varpi^\flat}\bigr],\F_p\bigr) \neq 0$ and hence finishes the proof of the claim.
  
  Note that the map of pairs
  \[ \bigl(V[T]_{(\fm_V,T)},(\varpi^\flat,T)\bigr) \to \bigl(V[T]_{(\fm_V,T)},(\fm_V,T)\bigr) \]
  induces an isomorphism on henselizations by \cite[\href{https://stacks.math.columbia.edu/tag/0F0L}{Lem.~0F0L}]{stacks-project}.
  Therefore, $V[T]_{(\fm_V,T)}$ and $V\{T\}$ have isomorphic $(\varpi^\flat,T)$-adic completions \cite[\href{https://stacks.math.columbia.edu/tag/0AGU}{Lem.~0AGU}]{stacks-project}.
  On the other hand, the $(\varpi^\flat,T)$-adic completion of $V[T]_{(\fm_V,T)}$ is identified via the string of isomorphisms
  \begin{IEEEeqnarray*}{rCl}
    \lim_n V[T]_{(\fm_V,T)}/(\varpi^\flat,T)^n & \simeq & \lim_{a,b} V[T]_{(\fm_V,T)}/(\varpi^{\flat,a},T^b) \simeq \lim_{a,b} \bigl(V[T]/(\varpi^{\flat,a},T^b)\bigr)_{(\fm_V,T)} \\
    & \simeq & \lim_{a,b} V[T]/(\varpi^{\flat,a},T^b) \simeq \lim_b \bigl( \lim_a V/(\varpi^{\flat,a}) \bigr) [T]/(T^b) \\
    & \simeq & \lim_b V[T]/(T^b) \simeq V \llbracket T \rrbracket
  \end{IEEEeqnarray*}
  because the chain of ideals $(\varpi^{\flat,n},T^n)$ is cofinal with $(\varpi^\flat,T)^n$, localization is exact, $(\fm_V,T)$ is maximal and $\rad\bigl((\varpi^{\flat,a},T^b)\bigr) = (\fm_V,T)$, completion commutes with finite sums, and $V$ is $\varpi^\flat$-adically complete, respectively.
  We obtain an induced map $\theta \colon \bigl(V\{T\}\bigl[\tfrac{1}{\varpi^\flat}\bigr]\bigr)[S] \to \bigl(V \llbracket T \rrbracket\bigl[\tfrac{1}{\varpi^\flat}\bigr]\bigr)[S] \subset \bigl(\C^\flat_p \llbracket T \rrbracket\bigr)[S]$.

  Assume that $S^p - S - \tfrac{T}{\varpi^\flat} = g \cdot h$ in $\bigl(V\{T\}\bigl[\tfrac{1}{\varpi^\flat}\bigr]\bigr)[S]$.
  Set $d \colonequals \deg(g)$;
  this is also the degree of $\theta(g)$ because the degree of $S^p - S - \tfrac{T}{\varpi^\flat}$ does not decrease under $\theta$.
  Since the roots of $S^p - S - \tfrac{T}{\varpi^\flat}$ in $\bigl(\C^\flat_p \llbracket T \rrbracket\bigr)[S]$ are given by $\alpha + \sum^{\infty}_{\nu = 0} \bigl(\tfrac{T}{\varpi^\flat}\bigr)^{p^\nu}$, where $\alpha$ ranges over all elements of $\F_p$, the $(d-1)$-st coefficient of $g$ is $-\sum^d_{\mu=1} \alpha_\mu - d \cdot \sum^\infty_{\nu = 0} \bigl(\tfrac{T}{\varpi^\flat}\bigr)^{p^\nu}$ for some (pairwise different) $\alpha_1,\dotsc,\alpha_d \in \F_p$.
  As the image of $\theta$ is contained in $\bigl(V \llbracket T \rrbracket\bigl[\tfrac{1}{\varpi^\flat}\bigr]\bigr)[S]$, where the coefficients in all appearing power series have bounded denominators, $d \equiv 0 \mod p$ and therefore $\deg(g) = 0$ or $\deg(h) = 0$.
  However, $S^p - S - \tfrac{T}{\varpi^\flat}$ is not divisible by any non-unit of $V\{T\}\bigl[\tfrac{1}{\varpi^\flat}\bigr]$, so either $g$ or $h$ must be a unit.
  In other words, $S^p - S - \tfrac{T}{\varpi^\flat}$ is irreducible and the torsor $V\{T\}\bigl[\tfrac{1}{\varpi^\flat}\bigr] \to \bigl(V\{T\}\bigl[\tfrac{1}{\varpi^\flat}\bigr]\bigr)[S]/\bigl(S^p - S - \tfrac{T}{\varpi^\flat}\bigr)$ is non-trivial.
\end{exmp}

\subsection{Base changing perfectoid covers along finite morphisms}

In this subsection, we record some results about certain finite covers of perfectoid spaces which will be crucial in the proof of Theorem~\ref{thm:perfectoid-Artin} and might be of independent interest.
We refer to \cite[\S~1.4]{MR1734903} for the definition and basic properties of finite morphisms in the generality of locally noetherian adic spaces.
If there is a finite morphism $Y_0 \to X_0$ in Situation~\ref{sit:inverse-system}, we do not know whether the fiber product $Y_0 \times_{X_0} X$ exists as an adic space:
it is unclear whether its structure presheaf is a sheaf.
Therefore, here it is advantageous to work in the category of diamonds in which this fiber product always exists and is in fact represented by another perfectoid space (Proposition~\ref{prop:perfectoidization}).

The next result, which holds more generally for any noetherian analytic affinoid adic space $X_0$ over $\Z_p$, is a direct consequence of Bhatt--Scholze's construction of perfectoidizations of mixed characteristic rings \cite{bhatt2019prisms}.
\begin{prop}\label{prop:perfectoidization-affinoid}
  Let $X_0 = \Spa(A_0,A^\circ_0)$ be an affinoid rigid space over $C$.
  Let $X = \Spa(A,A^+) \to X_0$ be an affinoid perfectoid space over $X_0$ and $Y_0 = \Spa(B_0,B^\circ_0) \to X_0$ be a finite morphism.
  Then the diamond $Y^\lozenge_0 \times_{X^\lozenge_0} X^\lozenge$ is the diamondification of an affinoid perfectoid space $Y$.
\end{prop}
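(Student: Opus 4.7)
The strategy is to construct $Y$ directly as the adic spectrum of a perfectoidization, in the sense of Bhatt--Scholze \cite{bhatt2019prisms}, of the naive base change $B_0 \widehat\otimes_{A_0} A$. The point is that while this base change is a finite $A$-algebra, it need not itself be perfectoid; however, Bhatt--Scholze's functor produces a universal perfectoid replacement which still recovers the desired fiber product after passing to diamonds.

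First, since $A_0 \to B_0$ is module-finite, the completed tensor product $B' \colonequals B_0 \widehat\otimes_{A_0} A$ is a finite $A$-algebra carrying a natural Huber ring structure; a ring of integral elements $B'^\circ$ can be chosen so as to be integral over $A^+$. Because $A^+$ is an integral perfectoid ring, $B'^\circ$ is a $\varpi$-adically complete $A^+$-algebra, and I would apply the perfectoidization functor of \cite{bhatt2019prisms} to obtain an integral perfectoid $A^+$-algebra $B^+ \colonequals (B'^\circ)_{\perfd}$ together with a universal map $B'^\circ \to B^+$ among morphisms to integral perfectoid $A^+$-algebras. Setting $B \colonequals B^+\bigl[\tfrac{1}{\varpi}\bigr]$, the pair $(B, B^+)$ defines an affinoid perfectoid space $Y \colonequals \Spa(B, B^+)$ over $X$.

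Next, I would verify that $Y^\lozenge$ represents the diamond $Y_0^\lozenge \times_{X_0^\lozenge} X^\lozenge$. A map from a test affinoid perfectoid space $Z = \Spa(R, R^+)$ into the fiber product corresponds to a pair of compatible morphisms of Huber pairs $(A_0, A_0^\circ) \to (R, R^+)$ which factor through both $(B_0, B_0^\circ)$ and $(A, A^+)$; this is equivalent to a continuous $A^+$-algebra map $B'^\circ \to R^+$. Since $R^+$ is itself an integral perfectoid $A^+$-algebra, the universal property of perfectoidization provides a unique factorization $B'^\circ \to B^+ \to R^+$, which is precisely the universal property characterizing $Y^\lozenge$ as the claimed fiber product of diamonds.

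The main obstacle is to ensure that Bhatt--Scholze's perfectoidization, which is phrased in the language of $p$-complete integral algebras, interacts correctly with the analytic structures appearing here: one needs to check that the resulting $B = B^+\bigl[\tfrac{1}{\varpi}\bigr]$ is indeed Tate perfectoid over $C$, that the integral closure of the image of $B'^\circ$ in $B$ agrees with $B^+$ up to the usual $+$-algebra ambiguity, and that the universal property against all integral perfectoid $A^+$-algebras is strong enough to test morphisms of diamonds (where test objects live over the perfectoid site of $C$). Once these technical points are handled, which the cited work of Bhatt--Scholze is designed to make routine, the identification $Y^\lozenge \simeq Y_0^\lozenge \times_{X_0^\lozenge} X^\lozenge$ is formal.
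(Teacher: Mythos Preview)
Your approach is essentially the paper's: apply Bhatt--Scholze perfectoidization to (the integral structure of) the naive base change and use its universal property to identify the resulting perfectoid space with the fiber product of diamonds. The paper organizes the argument slightly differently: it first invokes the fact that diamondification commutes with fiber products to identify $Y_0^\lozenge \times_{X_0^\lozenge} X^\lozenge$ directly with $\Spd(B,B^+)$, where $B = B_0 \otimes_{A_0} A$ and $B^+$ is the integral closure of $A^+$ in $B$; the remaining task is then purely to show that this $\Spd$ is represented by a perfectoid space, which perfectoidization delivers. This reorganization also dissolves the obstacles you flag: the explicit choice of $B^+$ makes $A^+ \to B^+$ integral by construction (so \cite[Thm.~1.16.(1)]{bhatt2019prisms} applies on the nose), and the $\Spd$ formalism absorbs the bookkeeping about analytic structures and $+$-rings. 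Your direct verification on test objects is a legitimate alternative, but you should be precise about which ring of integral elements you take---your $B'^\circ$ is ambiguous, and the power-bounded subring need not be integral over $A^+$; the integral closure of $A^+$ in $B'$ is the correct choice.
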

\begin{proof}
  Since the diamondification functor from \cite[Lem.~15.1]{scholze2017etale} commutes with limits, $Y^\lozenge_0 \times_{X^\lozenge_0} X^\lozenge \simeq \Spd(B,B^+)$, where $(B,B^+)$ is the finite $(A,A^+)$-algebra with $B = B_0 \otimes_{A_0} A$ and $B^+$ the integral closure of $A^+$ in $B$ (cf.\ \cite[\S~1.4]{MR1734903}).
  As $A^+$ is perfectoid (in the sense of \cite[Def.~3.5]{MR3905467}) and $A^+ \to B^+$ is integral, there exists a universal map $B^+ \to B^+_\perfd$ from $B^+$ to a perfectoid ring \cite[Thm.~1.16.(1)]{bhatt2019prisms}.
  Consequently, the space $Y \colonequals \Spa\bigl(\widetilde{B},\widetilde{B}^+\bigr)$, where $\widetilde{B} \colonequals B^+_\perfd\bigl[\frac{1}{\varpi}\bigr]$ and $\widetilde{B}^+$ is the integral closure of $B^+_\perfd$ in $\widetilde{B}$, is perfectoid and the morphisms $Y \to Y_0$ and $Y \to X$ coming from the morphisms of affinoid Tate rings $(B_0,B^\circ_0) \to \bigl(\widetilde{B},\widetilde{B}^+\bigr)$ and $(A,A^+) \to \bigl(\widetilde{B},\widetilde{B}^+\bigr)$ induce an isomorphism of diamonds $Y^\lozenge \xrightarrow{\sim} Y^\lozenge_0 \times_{X^\lozenge_0} X^\lozenge$.
\end{proof}
The affinoid statement of Proposition~\ref{prop:perfectoidization-affinoid} globalizes by a standard gluing argument.
\begin{prop}\label{prop:perfectoidization}
  Let $X_0$ be a rigid space over $C$.
  Let $X \to X_0$ be a perfectoid space over $X_0$ and $f_0 \colon Y_0 \to X_0$ be a finite morphism.
  Then the diamond $Y^\lozenge_0 \times_{X^\lozenge_0} X^\lozenge$ is the diamondification of a perfectoid space $Y$.
\end{prop}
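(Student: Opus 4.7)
The plan is a standard gluing argument that reduces Proposition~\ref{prop:perfectoidization} to the affinoid case already handled in Proposition~\ref{prop:perfectoidization-affinoid}.

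First, I would cover $X_0$ by open affinoid rigid subspaces $\{U_{0,\alpha}\}_\alpha$. Because $f_0$ is finite, each preimage $V_{0,\alpha} \colonequals f_0^{-1}(U_{0,\alpha})$ is again affinoid (cf.\ \cite[\S~1.4]{MR1734903}). Pulling back this cover along $X \to X_0$ and refining yields a cover of $X$ by affinoid perfectoid open subspaces $\{W_\beta\}_\beta$ such that each $W_\beta$ maps into some chosen $U_{0,\alpha(\beta)}$; such a refinement exists since affinoid perfectoid subspaces form a basis of the topology on any perfectoid space.

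Second, for each $\beta$, Proposition~\ref{prop:perfectoidization-affinoid} supplies an affinoid perfectoid space $Y_\beta$ together with an isomorphism of diamonds
\[ Y_\beta^\lozenge \xrightarrow{\sim} V_{0,\alpha(\beta)}^\lozenge \times_{U_{0,\alpha(\beta)}^\lozenge} W_\beta^\lozenge \simeq Y_0^\lozenge \times_{X_0^\lozenge} W_\beta^\lozenge, \]
where the second identification uses that diamondification preserves fiber products of rigid spaces and that $V_{0,\alpha(\beta)} = Y_0 \times_{X_0} U_{0,\alpha(\beta)}$. Since open immersions of diamonds are stable under base change, each $Y_\beta^\lozenge$ sits as an open subdiamond of the global fiber product $Y_0^\lozenge \times_{X_0^\lozenge} X^\lozenge$, and these open subdiamonds cover it because the $W_\beta$ cover $X$.

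Finally, I would invoke the general fact from the theory of diamonds that the property of being representable by a perfectoid space is local in the analytic topology: if a diamond $D$ admits an open cover by subdiamonds each representable by a perfectoid space, then $D$ itself is representable, and the local representations glue to give the global one. Applied to $D = Y_0^\lozenge \times_{X_0^\lozenge} X^\lozenge$ with the cover $\{Y_\beta^\lozenge\}_\beta$, this produces the desired perfectoid space $Y$. The only substantive input is Proposition~\ref{prop:perfectoidization-affinoid}; the remaining work is essentially bookkeeping, and the main (minor) obstacle is pinning down the local-to-global principle for perfectoid representability, which is standard within the framework of \cite{scholze2017etale}.
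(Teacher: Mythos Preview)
Your proposal is correct and follows essentially the same approach as the paper: cover $X$ by affinoid perfectoids mapping into affinoid opens of $X_0$, apply Proposition~\ref{prop:perfectoidization-affinoid} locally, and glue. The only difference is presentational: where you invoke a black-box local-to-global principle for perfectoid representability of diamonds, the paper carries out that gluing by hand---using full faithfulness of diamondification (via Yoneda and the tilting equivalence) to extract isomorphisms $\varphi_{\alpha\beta}$ on overlaps and to verify the cocycle condition explicitly---presumably because the precise statement you cite is not isolated as a citable lemma in \cite{scholze2017etale}.
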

\begin{proof}
  Choose an open cover $X = \bigcup_{\alpha \in J} \widetilde{U}_\alpha$ by affinoid perfectoid spaces $\widetilde{U}_\alpha$ for which the compositions $\widetilde{U}_\alpha \to X_0$ factor through affinoid open subspaces $U_\alpha \subseteq X_0$.
  By Proposition~\ref{prop:perfectoidization-affinoid}, there are perfectoid spaces $f_\alpha \colon \widetilde{V}_\alpha \to \widetilde{U}_\alpha$ together with isomorphisms $\xi_\alpha \colon \widetilde{V}^\lozenge_\alpha \xrightarrow{\sim} \bigl(f^{-1}_0(U_\alpha)\bigr)^\lozenge \times_{U^\lozenge_\alpha} \widetilde{U}^\lozenge_\alpha \simeq Y^\lozenge_0 \times_{X^\lozenge_0} \widetilde{U}^\lozenge_\alpha$;
  as we work over a fixed perfectoid ground field, this simply means that $\widetilde{V}^\flat_\alpha$ represents the latter sheaf.

  For all $\alpha, \beta \in J$, set $\widetilde{U}_{\alpha\beta} \colonequals \widetilde{U}_\alpha \cap \widetilde{U}_\beta \subseteq X$, and similarly for triple intersections $\widetilde{U}_{\alpha\beta\gamma}$.
  Let $\widetilde{V}_{\alpha\beta} \colonequals f^{-1}_\alpha\bigl(\widetilde{U}_{\alpha\beta}\bigr) \subseteq \widetilde{V}_\alpha$.
  Since $\widetilde{V}_{\alpha\beta} \simeq \widetilde{V}_\alpha \times_{\widetilde{U}_\alpha} \widetilde{U}_{\alpha\beta}$, the tilt $\widetilde{V}^\flat_{\alpha\beta}$ represents
  \[ \widetilde{V}^\lozenge_\alpha \times_{\widetilde{U}^\lozenge_\alpha} \widetilde{U}^\lozenge_{\alpha\beta} \xrightarrow[\xi_\alpha\restriction\widetilde{V}^\flat_{\alpha\beta}]{\sim} Y^\lozenge_0 \times_{X^\lozenge_0} \widetilde{U}^\lozenge_\alpha \times_{\widetilde{U}^\lozenge_\alpha} \widetilde{U}^\lozenge_{\alpha\beta} \simeq Y^\lozenge_0 \times_{X^\lozenge_0} \widetilde{U}^\lozenge_{\alpha\beta}. \]
  The corresponding strings of isomorphisms for the perfectoid spaces $\widetilde{V}^\flat_{\beta\alpha} \subseteq \widetilde{V}^\flat_\beta$, $\widetilde{V}^\flat_{\alpha\beta} \cap \widetilde{V}^\flat_{\alpha\gamma} \subseteq \widetilde{V}^\flat_\alpha$, and $\widetilde{V}^\flat_{\beta\alpha} \cap \widetilde{V}^\flat_{\beta\gamma} \subseteq \widetilde{V}^\flat_\beta$ yield the following commutative diagram:
  \[ \begin{tikzcd}
      \Mor\bigl({-},\widetilde{V}^\flat_{\alpha\beta} \cap \widetilde{V}^\flat_{\alpha\gamma}\bigr) \arrow[r,"\sim"] \arrow[d] & Y^\lozenge_0 \times_{X^\lozenge_0} \widetilde{U}^\lozenge_{\alpha\beta\gamma} \arrow[d] & \Mor\bigl({-},\widetilde{V}^\flat_{\beta\alpha} \cap \widetilde{V}^\flat_{\beta\gamma}\bigr) \arrow[l,"\sim"'] \arrow[d] \\
      \Mor\bigl({-},\widetilde{V}^\flat_{\alpha\beta}\bigr) \arrow[r,"\xi_\alpha\restriction\widetilde{V}^\flat_{\alpha\beta}","\sim"'] & Y^\lozenge_0 \times_{X^\lozenge_0} \widetilde{U}^\lozenge_{\alpha\beta} & \Mor\bigl({-},\widetilde{V}^\flat_{\beta\alpha}\bigr). \arrow[l,"\sim","\xi_\beta\restriction\widetilde{V}^\flat_{\beta\alpha}"']
  \end{tikzcd} \]
  By the full faithfulness of the Yoneda functor, the bottom lines are induced by unique isomorphisms $\varphi_{\alpha\beta} \colon \widetilde{V}^\flat_{\alpha\beta} \xrightarrow{\sim} \widetilde{V}^\flat_{\beta\alpha}$ such that $\varphi^{-1}_{\alpha\beta}\bigl(\widetilde{V}^\flat_{\beta\alpha} \cap \widetilde{V}^\flat_{\beta\gamma}\bigr) = \widetilde{V}^\flat_{\alpha\beta} \cap \widetilde{V}^\flat_{\alpha\gamma}$ and the cocycle condition
  \[ \restr{\varphi_{\beta\gamma}}{\widetilde{V}^\flat_{\beta\alpha} \cap \widetilde{V}^\flat_{\beta\gamma}} \circ \restr{\varphi_{\alpha\beta}}{\widetilde{V}^\flat_{\alpha\beta} \cap \widetilde{V}^\flat_{\alpha\gamma}} = \restr{\varphi_{\alpha\gamma}}{\widetilde{V}^\flat_{\alpha\beta} \cap \widetilde{V}^\flat_{\alpha\gamma}} \]
  is satisfied.
  Using these isomorphisms and the tilting equivalence, we can glue the various $\widetilde{V}_\alpha$ to a perfectoid space $Y$.

  Moreover, the $\xi_\alpha$ glue to a morphism $\xi \colon Y^\lozenge \to Y^\lozenge_0 \times_{X^\lozenge_0} X^\lozenge$ because $\xi_\beta \circ \varphi^\lozenge_{\alpha\beta} = \restr{\xi_\alpha}{\widetilde{V}^\flat_{\alpha\beta}}$ essentially by definition.
  We can check that $\xi$ is an isomorphism after base change along the open cover $Y^\lozenge_0 \times_{X^\lozenge_0} \widetilde{U}^\lozenge_\alpha$.
  In that case, this follows from the fact that the $\xi_\alpha$ are isomorphisms.
\end{proof}
\begin{exmp}\label{exmp:closed-subspace}
  When $Y_0 \to X_0$ is a Zariski-closed immersion with corresponding coherent ideal sheaf $\cJ_0 \subset \cO_{X_0}$, the perfectoid space $Y$ is given in the affinoid setting of Proposition~\ref{prop:perfectoidization-affinoid} by the Zariski-closed subspace corresponding to the ideal $\cJ_0(\Spa(A_0,A^\circ_0))\cdot A \subseteq A$ with its perfectoid structure from \cite[Lem.~2.2.2]{MR3418533}; cf.\ also \cite[Rmk.~7.5]{bhatt2019prisms}.
  In the general setting of Proposition~\ref{prop:perfectoidization}, these spaces glue again to $Y \subset X$ by universality.
\end{exmp}
Proposition~\ref{prop:perfectoidization} shows in particular that Situation~\ref{sit:inverse-system} is preserved under base change along any finite morphism $Y_0 \to X_0$.
The isomorphism $Y^\lozenge \simeq Y^\lozenge_0 \times_{X^\lozenge_0} X^\lozenge$ yields, via projection to the two factors and the definition of the diamonds $Y^\lozenge_0$ and $X^\lozenge$, morphisms of adic spaces $Y \to Y_0$ and $Y \to X$.
Setting $Y_i \colonequals Y_0 \times_{X_0} X_i$ for all $i \in I$, we obtain an inverse system of maps $Y \to Y_i$.
\begin{lem}\label{lem:system-etale-sheaves}
  For all \'etale sheaves $F_0$ on $Y_0$ with pullback $F_i$ to $Y_i$ and $F$ to $Y$, the natural map $\colim_i \Hh^n(Y_i,F_i) \to \Hh^n(Y,F)$ is an isomorphism.
\end{lem}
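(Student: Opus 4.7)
The plan is to reduce the lemma to the standard continuity of étale cohomology along cofiltered inverse limits of qcqs spatial diamonds with qcqs transition maps.

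First, I would verify that $Y^\lozenge \simeq \lim_i Y^\lozenge_i$ in the category of diamonds. This is immediate from Proposition~\ref{prop:perfectoidization} combined with the defining property $X^\lozenge \simeq \lim_i X^\lozenge_i$ in Situation~\ref{sit:inverse-system}: since limits of sheaves commute with fiber products,
$$Y^\lozenge \simeq Y^\lozenge_0 \times_{X^\lozenge_0} X^\lozenge \simeq Y^\lozenge_0 \times_{X^\lozenge_0} \lim_i X^\lozenge_i \simeq \lim_i \bigl(Y^\lozenge_0 \times_{X^\lozenge_0} X^\lozenge_i\bigr) \simeq \lim_i Y^\lozenge_i.$$
Because the transition maps $X_j \to X_i$ are finite, so are their base changes $Y_j \to Y_i$; consequently the $Y^\lozenge_i$ form a cofiltered system of qcqs spatial diamonds along qcqs transition maps whose limit is the qcqs spatial diamond $Y^\lozenge$.

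Second, I would transfer the statement to the diamond setting using the equivalence between the small étale site of an analytic adic space $Z$ and that of its associated diamond $Z^\lozenge$ recorded in \cite{scholze2017etale}. Under this equivalence, the compatible family $(F_i, F)$ corresponds to a compatible family of étale sheaves $(F^\lozenge_i, F^\lozenge)$ on $(Y^\lozenge_i, Y^\lozenge)$, and the associated cohomology groups agree. Third, I would invoke the general continuity of étale cohomology under cofiltered limits of qcqs spatial diamonds with qcqs transition maps from \cite{scholze2017etale} (or alternatively apply \cite[Lem.~XVII.5.1.2]{SGA4} to the resulting equivalence $Y^{\sim}_{\et} \simeq \lim_i Y^{\sim}_{i,\et}$ of étale topoi, as in the example earlier in this section). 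This yields
$$\colim_i \Hh^n(Y^\lozenge_i, F^\lozenge_i) \xrightarrow{\sim} \Hh^n(Y^\lozenge, F^\lozenge),$$
which after retranslating via the comparison of topoi is precisely the desired isomorphism.

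The only nontrivial point is the first step, the limit identification $Y^\lozenge \simeq \lim_i Y^\lozenge_i$; once this is in hand, the rest is a combination of formal continuity results for the étale cohomology of diamonds. The finiteness of the transition maps in Situation~\ref{sit:inverse-system} plays a double role here, ensuring both that the system $(Y^\lozenge_i)$ inherits qcqs transition maps from $(X^\lozenge_i)$ and that the fiber product in Proposition~\ref{prop:perfectoidization} applies directly.
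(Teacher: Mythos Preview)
Your proposal is correct and follows essentially the same route as the paper: identify $Y^\lozenge \simeq \lim_i Y^\lozenge_i$ via Proposition~\ref{prop:perfectoidization} and the commutation of fiber products with limits, transfer to diamonds using the equivalence of \'etale sites from \cite[Lem.~15.6]{scholze2017etale}, and conclude by the continuity result \cite[Prop.~14.9]{scholze2017etale}. The paper presents these steps in a slightly different order (first the site equivalence, then the limit identification) and is less explicit about the qcqs/spatial hypotheses needed for Prop.~14.9, but the argument is the same.
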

\begin{proof}
  By \cite[Lem.~15.6]{scholze2017etale}, the diamonds $Y^\lozenge_i$ associated with $Y_i$ are spatial and the diamondification functor induces a natural equivalence of sites $Y^\lozenge_{i,\et} \xrightarrow{\sim} Y_{i,\et}$.
  Denoting the pullbacks of the $F_i$ and $F$ under these equivalences by $F^\lozenge_i$ and $F^\lozenge$, respectively, it therefore suffices to show that the natural map
  \[ \colim_i \Hh^n_{\et}\bigl(Y^\lozenge_i,F^\lozenge_i\bigr) \to \Hh^n_{\et}\bigl(Y^\lozenge,F^\lozenge\bigr) \]
  is an isomorphism.
  But
  \[ \lim_i Y^\lozenge_i = Y^\lozenge_0 \times_{X^\lozenge_0} \lim_i X^\lozenge_i \simeq Y^\lozenge_0 \times_{X^\lozenge_0} X^\lozenge, \]
  so the statement follows from \cite[Prop.~14.9]{scholze2017etale} and Proposition~\ref{prop:perfectoidization}.
\end{proof}
\begin{exmp}\label{exmp:system-structure-sheaves} 
  By \cite[\S~2.3]{caraiani2018shimura}, any locally spatial diamond $\Delta$ over $\Spd(C,\cO_C)$ can be equipped with an integral structure sheaf $\cO^+_\Delta$ of $\cO_C$-modules on its quasi-pro-\'etale site $\Delta_{\qproet}$, defined in \cite[Def.~14.1]{scholze2017etale}.
  Moreover, the quotient sheaf $\cO^+_\Delta/\varpi$ is \'etale, that is, in the image of the fully faithful pullback functor $\Delta^\sim_{\et} \to \Delta^\sim_{\qproet}$.
  When $\Delta = Z^\lozenge$ is the diamondification of a rigid space $Z$, the pullback functor $Z^\sim_{\et} \to Z^{\lozenge,\sim}_{\et}$ (obtained from the equivalence of site $Z^\lozenge_{\et} \xrightarrow{\sim} Z_{\et}$) identifies the sheaf of almost $\cO_C$-modules $\bigl(\cO^+_Z/\varpi\bigr)^a$ with $\bigl(\cO^+_{Z^\lozenge}/\varpi\bigr)^a$ \cite[Lem.~2.3.3]{caraiani2018shimura}.

  This discussion applies to the setting of Lemma~\ref{lem:system-etale-sheaves} as follows:
  When $F_0 = \bigl(\cO^+_{Y_0}/\varpi\bigr)^a$, chasing through the diagram
  \[ \begin{tikzcd}
      Y^\lozenge_{i,\qproet} \arrow[r] \arrow[d] & Y^\lozenge_{i,\et} \arrow[r,"\sim"] \arrow[d] & Y_{i,\et} \arrow[d,"\rho_i"] \\
      Y^\lozenge_{0,\qproet} \arrow[r] & Y^\lozenge_{0,\et} \arrow[r,"\sim"] & Y_{0,\et}
  \end{tikzcd} \]
  shows that the pullback $F_i = \rho^*_i F_0$ is given by $\bigl(\cO^+_{Y_i}/\varpi\bigr)^a$ because the diamondification $\rho^\lozenge_i \colon Y^\lozenge_i \to Y^\lozenge_0$ of the finite morphism $Y_i \to Y_0$ is quasi-pro-\'etale \cite[Prop.~2.3.4]{caraiani2018shimura};
  virtually the same argument applies to the pullback to $Y_{\et}$.
  Therefore, the natural map $\colim_i \Hh^n\bigl(Y_i,\cO^+_{Y_i}/\varpi\bigr) \to \Hh^n\bigl(Y,\cO^+_Y/\varpi\bigr)$ is an almost isomorphism by Lemma~\ref{lem:system-etale-sheaves}.
\end{exmp}

\subsection{Proof of Theorem~\ref{thm:perfectoid-Artin}}

We begin with the following key assertion, whose proof is inspired by \cite[\S~4.2]{MR3418533}.
\begin{lem}\label{lem:perfectoid-Artin-key}
  In Situation~\ref{sit:inverse-system}, let $Y_0 \to X_0$ be a finite morphism and $Y_i \colonequals Y_0 \times_{X_0} X_i$ for all $i \in I$.
  Assume that $X_0$ is proper.
  Then for all $n > d$
  \[ \colim_i \Hh^n_{\et}(Y_i,\F_p) = 0. \]
\end{lem}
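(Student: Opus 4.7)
The strategy, following \cite[\S~4.2]{MR3418533}, is to deduce the vanishing of $\F_p$-cohomology from an analogous almost vanishing of $\cO^+/\varpi$-cohomology at the perfectoid limit. Since $Y_0 \to X_0$ is finite and $X_0$ is proper of dimension $d$, each $Y_i$ is a proper rigid space over $C$ of dimension at most $d$. Scholze's primitive comparison theorem therefore supplies a natural almost isomorphism
\[ \Hh^n_{\et}(Y_i,\F_p) \otimes_{\F_p} \cO_C/\varpi \longrightarrow \Hh^n\bigl(Y_i,\cO^+_{Y_i}/\varpi\bigr), \]
functorial in the transition maps of the system. Passing to the filtered colimit over $i \in I$ and using that $({-}) \otimes_{\F_p} \cO_C/\varpi$ commutes with colimits, Example~\ref{exmp:system-structure-sheaves} identifies the colimit of the right-hand side, up to almost isomorphism, with $\Hh^n(Y,\cO^+_Y/\varpi)$, where $Y$ is the perfectoid space from Proposition~\ref{prop:perfectoidization} representing $Y_0^\lozenge \times_{X_0^\lozenge} X^\lozenge$.

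The heart of the argument is then to show that $\Hh^n(Y,\cO^+_Y/\varpi)$ is almost zero for $n > d$. The space $Y$ is qcqs perfectoid of dimension at most $d$ (being finite over the qcqs perfectoid $X$, which has the same dimension as the $X_i$). By Proposition~\ref{prop:perfectoidization-affinoid}, pulling back a cover of $X$ by affinoid perfectoid opens along $Y \to X$ yields such a cover of $Y$. Scholze's almost acyclicity of affinoid perfectoid spaces gives almost vanishing in positive degrees of $\cO^+/\varpi$-cohomology on each member of the cover and on every finite intersection; a \v{C}ech-to-derived spectral sequence then reduces the claim to bounding the degrees in which the \v{C}ech complex of such a cover can be nonzero, and combined with the known bound on the cohomological dimension of the spatial diamond $Y^\lozenge$ in terms of its Krull dimension, this yields the desired vanishing in degrees $> d$.

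To conclude, write $V \colonequals \colim_i \Hh^n_{\et}(Y_i,\F_p)$. We have shown that $V \otimes_{\F_p} \cO_C/\varpi$ is almost zero. If $V$ were nonzero, a choice of nonzero $v \in V$ would realise $\F_p$ as a direct summand of $V$ and hence produce $\cO_C/\varpi$ as a direct summand of $V \otimes_{\F_p} \cO_C/\varpi$, which is not almost zero; therefore $V = 0$, as desired. The main obstacle is the second step: at finite level $\Hh^n(Y_i,\cO^+_{Y_i}/\varpi)$ need not vanish in the range $d < n \le 2d$, and it is only after passing to the perfectoid limit that one obtains the stronger bound we need.
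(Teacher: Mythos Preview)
Your argument is correct and matches the paper's approach: primitive comparison at each finite level, passage to the perfectoid limit $Y$ via Proposition~\ref{prop:perfectoidization} and Example~\ref{exmp:system-structure-sheaves}, almost acyclicity of $\cO^+/\varpi$ on affinoid perfectoids to pass from \'etale to analytic cohomology, and finally the cohomological dimension bound on $\lvert Y\rvert \simeq \lim_i \lvert Y_i\rvert$ coming from \cite[Cor.~4.6]{MR1179103}. One small sharpening: the \v{C}ech-to-derived step does not literally bound the degrees of the \v{C}ech complex but rather yields an almost isomorphism $\Hh^n_{\et}(Y,\cO^+_Y/\varpi)^a \simeq \Hh^n(\lvert Y\rvert,\cO^+_Y/\varpi)^a$, after which it is the topological cohomological dimension of $\lvert Y\rvert$ (inherited from the $\lvert Y_i\rvert$, not a Krull-dimension statement about the diamond) that forces vanishing above~$d$.
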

\begin{proof}
  It suffices to show that the free $\cO_C/p$-module
  \[ \colim_i \Hh^n_{\et}(Y_i,\F_p) \otimes_{\F_p} \cO_C/p \]
  is almost zero for all $n>d$.
  By Scholze's primitive comparison theorem \cite[Thm.~3.17]{MR3204346} (here properness of the $X_i$ is crucial!) and Example~\ref{exmp:system-structure-sheaves}, there is an almost isomorphism
  \[ \colim_i \Hh^n_{\et}(Y_i,\F_p) \otimes_{\F_p} \cO_C/p \xrightarrow{\sim} \colim_i \Hh^n_{\et}(Y_i,\cO^+_{Y_i}/p) \xrightarrow{\sim} \Hh^n_{\et}(Y,\cO^+_Y/p), \]
  where $Y$ is the perfectoid space from Proposition~\ref{prop:perfectoidization}.
  The \v{C}ech-to-derived functor spectral sequence for an affinoid perfectoid cover of $Y$ and \cite[Prop.~6.14, Prop.~7.13]{MR3090258} or \cite[Prop.~8.3.2.(c)]{MR3379653} show that there is an almost isomorphism of cohomology groups $\Hh^n_{\et}(Y,\cO^+_Y/p)^a \simeq \Hh^n(Y,\cO^+_Y/p)^a$ for the \'etale and analytic topology.
  However, since $Y^\lozenge \simeq Y^\lozenge_0 \times_{X^\lozenge_0} X^\lozenge \simeq \lim_i Y^\lozenge_i$ and thus $\abs{Y} \simeq \lim_i \abs{Y_i}$ \cite[Lem.~11.22, Lem.~15.6]{scholze2017etale} and since the cohomological dimension of the $\abs{Y_i}$ is at most $\dim Y_i = d$ \cite[Cor.~4.6]{MR1179103}, the statement follows; cf. the proof of \cite[Cor.~4.2.2]{MR3418533}.
\end{proof}
\begin{rem}
  In the proof of Lemma~\ref{lem:perfectoid-Artin-key}, we could have proceeded along the lines of \cite{caraiani2018shimura} and used Proposition~\ref{prop:perfectoidization-affinoid} directly instead of referring to the global statement of Proposition~\ref{prop:perfectoidization}.
  Namely, we can simply define the spatial diamond $Y^\lozenge \colonequals \lim_i Y^\lozenge_i$;
  the natural map
  \[ \colim_i \Hh^n_{\et}(Y^\lozenge_i,\cO^+_{Y^\lozenge_i}/p) \to \Hh^n_{\et}(Y^\lozenge,\cO^+_{Y^\lozenge}/p) \]
  is still an isomorphism by \cite[Prop.~14.9]{scholze2017etale}.
  The morphism $\pi \colon Y^\lozenge_{\et} \to \abs{Y^\lozenge_0} = \abs{Y_0}$ from the \'etale site of $Y^\lozenge$ to the analytic site of $Y_0$ induces an isomorphism
  \[ \R\Gamma_{\et}(Y^\lozenge,\cO^+_{Y^\lozenge}/p) \simeq \R\Gamma\bigl(\abs{Y_0},\R\pi_*(\cO^+_{Y^\lozenge}/p)\bigr). \]
  Since the cohomological dimension of $\abs{Y_0}$ is at most $\dim Y_0 = d$ \cite[Cor.~4.6]{MR1179103}, we are left to prove that $\Rr^i\pi_*(\cO^+_{Y^\lozenge}/p)^a = 0$ for all $i>0$.
  This statement can be checked locally on $\abs{Y_0}$ \cite[Cor.~16.10]{scholze2017etale}, so we may assume that we are in the affinoid situation of Proposition~\ref{prop:perfectoidization-affinoid} and conclude with \cite[Prop.~7.13]{MR3090258} or \cite[Prop.~8.3.2.(c)]{MR3379653}.
\end{rem}
At last, we can finish the proof of Theorem~\ref{thm:perfectoid-Artin} by reducing the assertion to Lemma~\ref{lem:perfectoid-Artin-key} via a standard d\'evissage argument.
\begin{proof}[{Proof of Theorem~\ref{thm:perfectoid-Artin}}]
  It suffices to prove the following statement for all $d \ge 0$ and all $n > d$ via ascending induction on $d$ and descending induction on $n$:
  \begin{equation}\label{eqn:induction-claim}
    \colim_k \Hh^n_{\et}(X_k,F_k) = 0.
    \tag{$V_{d,n}$}
  \end{equation}
  The base case $n > 2d$ is \cite[Cor.~2.8.3]{MR1734903}.
  For the inductive step, we fix $d > 0$ and $n > d$ and assume $(V_{d',n'})$ for all $d'$, $n'$ such that either $d' < d$ and $n' > d'$, or $d' = d$ and $n' > n$.

  First, by the topological invariance of the \'etale site and Proposition~\ref{prop:perfectoidization}, we may, after possibly replacing $X_k$ with $X_{0,\red} \times_{X_0} X_k$, assume that $X_0$ is reduced.
  Choose a dense, Zariski-open subspace $j_0 \colon U_0 \hookrightarrow X_0$ for which $j^*_0F_0$ is locally constant.
  By shrinking $U_0$ if necessary, we can ensure that $U_0$ is normal; cf.\ \cite[Thm.~2.1.2]{MR1697371}.
  Let $i_0 \colon Z_0 \hookrightarrow X_0$ be the inclusion of the complement of $U_0$ with its reduced closed subspace structure.
  For all $k \in I$, define $U_k \colonequals U_0 \times_{X_0} X_k$, $Z_k \colonequals Z_0 \times_{X_0} X_k$, and let $j_k \colon U_k \hookrightarrow X_k$ and $i_k \colon Z_k \hookrightarrow X_k$ be the projections to the second factor.

  By Proposition~\ref{prop:perfectoidization}, there is a perfectoid space $Z$ representing $\lim_k Z^\lozenge_k$.
  Thus, by the induction hypothesis, $\colim_k \Hh^{n'}_{\et}(X_k,i_{k,*}i^*_kF_k) = \colim_k \Hh^{n'}_{\et}(Z_k,\pi^*_{k0}i^*_0F_0) = 0$ for $n' > d-1$.
  Taking cohomology of the direct system of short exact sequences
  \[ 0 \to j_{k,!}j^*_kF_k \to F_k \to i_{k,*}i^*_kF_k \to 0, \]
  we see that the resulting map
  \[ \colim_k \Hh^n_{\et}(X_k,j_{k,!}j^*_kF_k) \to \colim_k \Hh^n_{\et}(X_k,F_k) \]
  is an isomorphism.
  Since $j_{k,!}j^*_kF_k \simeq \pi^*_{k0}j_{0,!}j^*_0F_0$ by proper base change, we may assume that $F_0 = j_{0,!} L$ for some local system $L$ on $U_0$.
  
  In this case, we can choose a finite \'etale cover $f_0 \colon V_0 \to U_0$ for which $f^*_0L \simeq \F^{\oplus r}_p$.
  Let $\nu_0 \colon \widetilde{X}_0 \to X_0$ be the normalization of $X_0$;
  then $\nu_0$ is an isomorphism over $U_0$ (see e.g.\ \cite[\S~2]{MR1697371} for basic facts about normalizations of rigid spaces).
  By \cite[Thm.~1.6]{hansen-vanishing}, $f_0$ can be extended to a finite cover $\bar{f}_0 \colon Y_0 \to \widetilde{X}_0$.
  Denote by $\bar{\jmath}_0 \colon V_0 \hookrightarrow Y_0$ the induced open immersion.
  The trace map yields a surjective morphism
  \begin{equation}\label{eqn:trace-surjection}
    \nu_{0,*}\bar{f}_{0,*}\bar{\jmath}_{0,!} \F^{\oplus r}_p \simeq j_{0,!}f_{0,*} \F^{\oplus r}_p \simeq j_{0,!}f_{0,*}f^*_0 L \twoheadrightarrow j_{0,!} L.
  \end{equation}
  
  Let $K$ denote the kernel of this map.
  For all $k \in I$, set $\widetilde{X}_k \colonequals \widetilde{X}_0 \times_{X_0} X_k$, $V_k \colonequals V_0 \times_{X_0} X_k$, and $Y_k \colonequals Y_0 \times_{X_0} X_k$.
  The projections $\nu_k \colon \widetilde{X}_k \to X_k$ are still isomorphisms over $U_k$.
  We obtain finite \'etale covers $f_k \colon V_k \to U_k$ extending to finite covers $\bar{f}_k \colon Y_k \to \widetilde{X}_k$ and open immersions $\bar{\jmath}_k \colon V_k \hookrightarrow Y_k$ as base changes from the respective morphisms over $X_0$.

  Since $\nu_0$, $f_0$, and $\bar{f}_0$ are finite, proper base change shows that pulling back (\ref{eqn:trace-surjection}) along $\pi_{k0}$ yields a direct system of short exact sequences
  \[ 0 \to \pi^*_{k0}K \to \nu_{k,*}\bar{f}_{k,*}\bar{\jmath}_{k,!} \F^{\oplus r}_p \to \pi^*_{k0}j_{0,!}L \to 0, \]
  From the induction hypothesis $(V_{d,n+1})$, we know that $\colim_k \Hh^{n+1}_{\et}(X_k,\pi^*_{k0}K) = 0$.
  It remains to show that $\colim_k \Hh^n_{\et}(X_k,\nu_{k,*}\bar{f}_{k,*}\bar{\jmath}_{k,!} \F^{\oplus r}_p) = \colim_k \Hh^n_{\et}(Y_k,\bar{\jmath}_{k,!} \F^{\oplus r}_p) = 0$ as well.
  
  Let $A_k \colonequals Z_k \times_{X_k} Y_k$ and $\bar{\imath}_k \colon A_k \hookrightarrow Y_k$ be the projection to the second factor.
  Consider the short exact sequences
  \[ 0 \to \bar{\jmath}_{k,!}\F^{\oplus r}_p \to \F^{\oplus r}_p \to \bar{\imath}_{k,*}\F^{\oplus r}_p \to 0 \]
  on $Y_k$.
  Lemma~\ref{lem:perfectoid-Artin-key} applied to the systems of finite morphisms $Y_k \to X_k$ and $A_k \to Z_k$ shows that $\colim_k \Hh^{n-1}\bigl(Y_k,\bar{\imath}_{k,*}\F^{\oplus r}_p\bigr) = \bigl(\colim_k \Hh^{n-1}(A_k,\F_p)\bigr)^{\oplus r} = 0$ and $\colim_k \Hh^n_{\et}\bigl(Y_k,\F^{\oplus r}_p\bigr) = 0$, respectively, whence the claim.
\end{proof}

\section{Vanishing from ramification of the boundary divisor}\label{sect:boundary-vanishing}

Throughout this section, we work over a fixed algebraically closed field $k$.
\begin{defn}\label{defn:nc-divisor}
  Let $\cY$ be a smooth Deligne--Mumford stack over $k$.
  A \emph{normal crossings divisor} on $\cY$ is an effective Cartier divisor
  $\cD \subset \cY$ such that for any $y \in \cY(k)$, there is an integer $0 \le \gamma \le d$ and an isomorphism
  \[ \cO^\wedge_{\cY,y} \simeq k\llbracket t_1,\dotsc,t_d\rrbracket \]
  under which the restriction of $\cD$ to $\Spec \cO^\wedge_{\cY,y}$ is identified with $\{ t_1\dotsm t_\gamma = 0 \}$.
\end{defn}
Note that if $\cY$ is a scheme, Definition~\ref{defn:nc-divisor} agrees with the usual one by \cite[\href{https://stacks.math.columbia.edu/tag/0CBS}{Lem.~0CBS}]{stacks-project} and Artin approximation.
We will use the following notion from \cite[Def.~4.2]{MR3783421}.
\begin{defn}\label{defn:m-ramified}
  Let $\pi \colon \cY' \to \cY$ be a proper and quasi-finite morphism of smooth Deligne--Mumford stacks over $k$.
  Let $\cD \subset \cY$ be a normal crossings divisor.
  Then $\pi$ is called \emph{$m$-ramified over $\cD$} if for all $y \in \cY'(k)$ the induced map $\pi^* \colon \cO^\wedge_{\cY,\pi(y)} \to \cO^\wedge_{\cY',y}$ can be identified with a morphism
  \[ k\llbracket t_1,\dotsc,t_d\rrbracket \to k\llbracket t_1,\dotsc,t_d\rrbracket \]
  such that the restriction of $\cD$ to $\Spec \cO^\wedge_{\cY,\pi(y)}$ is given by $\{ t_1 \dotsm t_\gamma = 0 \}$ and there exist units $\nu_i \in k\llbracket t_1,\dotsc,t_d\rrbracket$ with $\pi^*(t_i) = \nu_i t^m_i$ for all $1 \le i \le \gamma$.
\end{defn}
\begin{exmp}\label{exmp:coordinate-hyperplanes}
  If $\cY' = \cY = \PP^N_k$, the mock Frobenius
  \[ \pi \colon \PP^N_k \to \PP^N_k, \quad [x_0:\dotsb:x_N] \mapsto [x^p_0:\dotsb:x^p_N] \]
  from Example~\ref{exmp:proj-perfectoid} is $p$-ramified over the divisor of coordinate hyperplanes $\cD \colonequals \{ x_0 \dotsm x_N = 0 \}$.
\end{exmp}
From now on assume $\charac k \neq p$.
The importance of Definition~\ref{defn:m-ramified} lies in the following calculation.
\begin{lem}\label{lem:m-ramified-vanishing}
  Let $Y \colonequals \Spec k\llbracket t_1,\dotsc,t_d\rrbracket$ and $D$ be the divisor $\{t_1 \dotsm t_\gamma = 0\}$ for some $0 \le \gamma \le d$.
  Set $U \colonequals Y \smallsetminus D$.
  Let $\pi \colon Y \to Y$ be a morphism such that for all $1 \le r \le \gamma$ there exist units $\nu_r \in k\llbracket t_1,\dotsc,t_d\rrbracket$ with $\pi^*(t_r) = \nu_r t^p_r$.
  Then the induced map
  \[ \pi^*_U \colon \Hh^i(U,\F_p) \to \Hh^i(U,\F_p) \]
  is $0$ for all $i>0$.
\end{lem}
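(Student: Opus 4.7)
The plan is to exploit the fact that $\pi_U^*$ is a graded ring endomorphism of $H^*(U, \F_p)$, so it suffices to establish two things: (i) $H^*(U, \F_p)$ is generated as an $\F_p$-algebra by $H^1(U, \F_p)$, and (ii) $\pi_U^*$ annihilates $H^1(U, \F_p)$. Once both are in hand, multiplicativity of pullback in cohomology immediately yields $\pi_U^* = 0$ in positive degrees.

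For (ii) I would invoke Kummer theory. Because $\charac k \neq p$ and $k$ is algebraically closed, a choice of primitive $p$-th root of unity identifies the \'etale sheaf $\mu_p$ with the constant sheaf $\F_p$ on $U$. The ring $k\llbracket t_1, \dotsc, t_d\rrbracket$ is a UFD with trivial Picard group, and each $D_r$ is principal (cut out by $t_r$), so $\mathrm{Pic}(U) = 0$, and the Kummer sequence yields $H^1(U, \F_p) \simeq \cO(U)^*/(\cO(U)^*)^p$. The group $\cO(U)^*$ is the direct product of $k\llbracket t_1, \dotsc, t_d\rrbracket^*$ and the free abelian group on $t_1, \dotsc, t_\gamma$, and the first factor is $p$-divisible: $k^*$ is $p$-divisible, and any element of the form $1 + m$ with $m \in \fm$ admits a $p$-th root by Hensel's lemma (applied to $X^p - (1+m)$, which has simple root $X = 1$ modulo $\fm$ because $p$ is invertible in $k$). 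Hence $H^1(U, \F_p) \simeq \bigoplus_{r=1}^\gamma \F_p \cdot [t_r]$, and
\[
    \pi_U^*[t_r] = [\nu_r t_r^p] = [\nu_r] \cdot [t_r]^p = [\nu_r] = 0 \in \cO(U)^*/(\cO(U)^*)^p,
\]
since both $t_r^p$ and $\nu_r$ are $p$-th powers in $\cO(U)^*$.

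For (i) I would use Gabber's absolute purity theorem and induct on $\gamma$. Set $Y_0 \colonequals Y$ and $Y_r \colonequals Y \setminus (D_1 \cup \dotsb \cup D_r)$, so $Y_\gamma = U$. Absolute purity applied to the smooth codimension-one closed immersion $D_r \cap Y_{r-1} \hookrightarrow Y_{r-1}$ produces a Gysin long exact sequence
\[
    \dotsb \to H^{i-2}(D_r \cap Y_{r-1}, \F_p)(-1) \to H^i(Y_{r-1}, \F_p) \to H^i(Y_r, \F_p) \to H^{i-1}(D_r \cap Y_{r-1}, \F_p)(-1) \to \dotsb,
\]
in which the Gysin map is, up to a Tate twist, cup product with the Kummer class $[t_r]$. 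The base case $\gamma = 0$ is immediate because $Y$ is strictly henselian, so $H^*(Y, \F_p)$ is concentrated in degree zero. In the inductive step, $D_r \cap Y_{r-1}$ is itself an instance of the lemma's setup with $r - 1$ coordinate divisors removed from a $(d-1)$-dimensional strictly henselian power series ring, so by induction its cohomology is the exterior $\F_p$-algebra on the classes of the remaining coordinates; chasing the Gysin sequences then shows $H^*(Y_r, \F_p) \simeq \Lambda_{\F_p}^* \langle [t_1], \dotsc, [t_r] \rangle$, and at $r = \gamma$ this is (i).

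The main obstacle is the inductive identification in (i): one needs to verify that the Gysin connecting homomorphisms really do realize cup product with the Kummer classes $[t_r]$, and that the resulting short exact sequences of cohomology groups split compatibly so that the exterior-algebra structure is preserved through the induction. Once this has been checked, (i) and (ii) together finish the proof.
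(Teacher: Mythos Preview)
Your proposal is correct and follows essentially the same approach as the paper: both reduce to degree $1$ via the exterior-algebra structure on $\Hh^*(U,\F_p)$ coming from cohomological purity, and both handle $\Hh^1$ via Kummer theory together with Hensel's lemma to extract $p$-th roots of units. The only cosmetic differences are that the paper cites the exterior-algebra statement directly (\cite[Thm.~XIX.1.2]{SGA4}) rather than sketching the Gysin induction, and it phrases the $\Hh^1$ computation in terms of the explicit $\mu_p$-torsor $z^p = t_1$ rather than the isomorphism $\Hh^1(U,\mu_p)\simeq \cO(U)^*/(\cO(U)^*)^p$, but these are the same argument.
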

\begin{proof}
  For all $1 \le r \le \gamma$, let $U_r \colonequals Y \smallsetminus \{t_r = 0\}$.
  By cohomological purity (and the assumption on $\charac k$), $\Hh^1(U_r,\F_p) \simeq \F_p$ and the natural map
  \[ \bigwedge^i \bigl(\oplus^\gamma_{r=1} \Hh^1(U_r,\F_p)\bigr) \to \Hh^i (U,\F_p) \]
  given by the cup product is an isomorphism;
  see e.g.\ \cite[Thm.~XIX.1.2]{SGA4} or \cite[Cor.~XVI.3.1.4]{MR3309086} for the most general version.
  Thus, we may assume $\gamma = 1$ and $i = 1$.

  Since $z^p - t_1$ is irreducible in $\bigl(k\llbracket t_1,\dotsc,t_d\rrbracket_{t_1}\bigr)[z]$, the group $\Hh^1(U,\F_p)$ is (under the non-canonical isomorphism $\F_p \simeq \mu_p$) generated by the class of the $\mu_p$-torsor
  \[ \Spec \tfrac{k\llbracket t_1,\dotsc,t_d\rrbracket_{t_1}[z]}{(z^p - t_1)} \to \Spec k\llbracket t_1,\dotsc,t_d\rrbracket_{t_1}. \]
  Its image under $\pi^*_U$ is the class of $\mu_p$-torsor 
  \[ \Spec \tfrac{k\llbracket t_1,\dotsc,t_d\rrbracket_{t_1}[z]}{(z^p - \nu_1t^p_1)} \to \Spec k\llbracket t_1,\dotsc,t_d\rrbracket_{t_1}. \]
  However, this torsor is trivial because the unit $\nu_1$ has a $p$-th root $\lambda_1 \in k\llbracket t_1,\dotsc,t_d \rrbracket$ by Hensel's lemma and thus
  \[ z^p - \nu_1t^p_1 = \prod_{\zeta \in \mu_p(k)} (z - \zeta \lambda_1 t_1). \qedhere \]
\end{proof}
\begin{thm}\label{thm:tower-cohomology}
  Let $\cY_n$, $n \in \Z_{\ge 0}$, be a projective system of smooth, tame Deligne--Mumford stacks of finite type over $k$ with proper and quasi-finite transition maps $\pi_{mn} \colon \cY_m \to \cY_n$ for all $m \ge n$.
  Let $\Lambda_0$ be an \'etale $\F_p$-local system on $\cY_0$ and $\Lambda_n \colonequals \pi^*_{n0} \Lambda_0$.
  Assume there exist normal crossings divisors $\cD_n \subset \cY_n$ such that $\cD_m = (\pi^*_{mn} \cD_n)_{\red}$ and $\pi_{n+1,n}$ is $p$-ramified over $\cD_n$ for all $n$.
  Set $\cU_n \colonequals \cY_n \smallsetminus \cD_n$.
  Then the natural map
  \[ \colim_n \Hh^i(\cY_n,\Lambda_n) \to \colim_n \Hh^i(\cU_n,\Lambda_n) \]
  is an isomorphism for all $i \ge 0$.
\end{thm}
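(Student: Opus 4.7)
The plan is to use the Leray spectral sequences
\[ E_2^{p,q}(n) = \Hh^p(\cY_n, R^q j_{n,*} \Lambda_n) \Longrightarrow \Hh^{p+q}(\cU_n, \Lambda_n) \]
attached to the open immersions $j_n \colon \cU_n \hookrightarrow \cY_n$. These are functorial in the transition maps $\pi_{mn}$, so passing to $\colim_n$ produces a spectral sequence converging to $\colim_n \Hh^{p+q}(\cU_n, \Lambda_n)$. The map in the theorem is then the edge morphism from $E_\infty^{i,0}$, so it suffices to establish
\begin{enumerate}[(a)]
  \item $j_{n,*} j_n^* \Lambda_n \simeq \Lambda_n$ for every $n$, so that the row $q=0$ contributes $\colim_n \Hh^p(\cY_n, \Lambda_n)$; and
  \item for every $q>0$, the sheaf morphism $\pi_{n+1,n}^{-1} R^q j_{n,*} \Lambda_n \to R^q j_{n+1,*} \Lambda_{n+1}$ vanishes on $\cY_{n+1}$, so that the $q>0$ part of the colimit $E_2$-page disappears.
\end{enumerate}

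Both assertions can be verified on stalks at geometric points, using that the stalks of $R^q j_{n,*} \Lambda_n$ are computed in the strict henselization. For any $\bar{y} \in \cY_n(k)$, Definition~\ref{defn:nc-divisor} identifies the complete local ring with $k\llbracket t_1,\dotsc,t_d\rrbracket$ and $\cD_n$ with $\{t_1\dotsm t_\gamma=0\}$, and since $\Lambda_n$ is lisse it restricts to the constant sheaf $\F_p^{\oplus r}$ on the strict henselization. For (a), the complement of the NCD in that regular local scheme is irreducible, so its $\Hh^0$ with $\F_p^{\oplus r}$-coefficients is $\F_p^{\oplus r} = (\Lambda_n)_{\bar{y}}$. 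For (b), at $\bar{y} \in \cD_{n+1}$ with image $\bar{y}' = \pi_{n+1,n}(\bar{y})$, the stalk map is the pullback on $\Hh^q$ induced by the local morphism $\cO^\wedge_{\cY_n, \bar{y}'} \to \cO^\wedge_{\cY_{n+1}, \bar{y}}$, which by the $p$-ramification hypothesis of Definition~\ref{defn:m-ramified} has precisely the shape required by Lemma~\ref{lem:m-ramified-vanishing}; the lemma then yields vanishing for $q>0$. Stalks at points of $\cU_{n+1}$ give nothing to check since $R^q j_{n+1,*} \Lambda_{n+1}$ already vanishes there.

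The main technical obstacle is carrying the scheme-theoretic statement of Lemma~\ref{lem:m-ramified-vanishing} into the DM-stack setting of the theorem. Here the tameness assumption is what makes the transfer routine: at any geometric point with stabilizer $G$, the order $|G|$ is coprime to $p$, so the $\F_p$-cohomology of a small \'etale neighborhood of $\bar{y}$ is computed as $G$-invariants on a degree-$|G|$ scheme \'etale cover, and a morphism that is already zero on the scheme cover descends to zero on invariants. Once (a) and (b) are in hand, the colimit spectral sequence degenerates onto the row $q=0$ and the edge map at infinity is exactly the claimed isomorphism.
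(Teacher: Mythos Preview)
Your argument is correct and is in fact somewhat simpler than the route the paper takes. Both proofs hinge on Lemma~\ref{lem:m-ramified-vanishing}, but they organize the reduction to it differently.

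The paper works with the excision triangle $i_{n,*} \R i_n^! \to \id \to \R j_{n,*} j_n^*$, pushes everything forward to $\cY_0$, and then checks that the stalks of $\cH^\ell\bigl(\hocolim_n \R\pi_{n,*} i_{n,*} \R i_n^! \Lambda_n\bigr)$ vanish at closed points of $\cY_0$. Because the stalk is now taken on $\cY_0$, the local model becomes a quotient stack $[V_m/H_m]$ and the group cohomology of $K_m \colonequals \ker(H_m \to H_0)$ enters; the paper then runs a hypercohomology spectral sequence with $E_2^{p,q} = \Hh^p\bigl(K_m, \Hh^q(U_m)\bigr)$ and shows that the composite of $\ell$ successive transition maps vanishes on $\cH^\ell$.

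Your approach bypasses the pushforward to $\cY_0$ entirely: you check the vanishing of the base-change map $\pi_{n+1,n}^{-1} R^q j_{n,*}\Lambda_n \to R^q j_{n+1,*}\Lambda_{n+1}$ directly on $\cY_{n+1}$, stalk by stalk, where after pulling back to an \'etale atlas it reduces immediately to Lemma~\ref{lem:m-ramified-vanishing}. This avoids the automorphism groups and their cohomology altogether, and shows the transition map is already zero after a single step rather than $\ell$ steps.

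Two small points. First, your argument passes silently between strict henselizations (where stalks of $R^q j_{n,*}$ are computed) and completions (where Definition~\ref{defn:m-ramified} and Lemma~\ref{lem:m-ramified-vanishing} are phrased); the paper handles this via Lemma~\ref{lem:base-change} using N\'eron--Popescu, but one can also just observe that the explicit Kummer computation in the proof of Lemma~\ref{lem:m-ramified-vanishing} works verbatim over any strictly henselian regular local ring. Second, your final paragraph about tameness is not actually needed for your argument: the stalk of an \'etale sheaf on a Deligne--Mumford stack is computed on any \'etale atlas without taking invariants, so steps (a) and (b) are purely scheme-theoretic and never see the stabilizers. Tameness matters in the paper's approach precisely because taking stalks on $\cY_0$ introduces the quotient by $H_m$; your route sidesteps this.
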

Throughout the proof, we denote by $j_n \colon \cU_n \hookrightarrow \cY_n$ and $i_n \colon \cD_n \hookrightarrow \cY_n$ the canonical inclusions.
To simplify notation, we moreover set $\pi_n \colonequals \pi_{n0}$.
The strategy of the proof is as follows.
By the Gysin sequence, the assertion is implied by $\hocolim_n \R\pi_{n,*} i_{n,*} \R i^!_n \Lambda_n \simeq 0$, which amounts to the vanishing of $\colim_n \cH^\ell\bigl(\R\pi_{n,*}i_{n,*}\R i^!_n \Lambda_n\bigr)_{y_0}$ for all $\ell$ and all $y_0 \in \cY(k)$.
In fact, we show that any composition of $\ell$ transition maps in the latter directed systems is $0$, which can be checked on completions by a standard reduction.

Thus, we first treat the complete, local picture.
Let $y_0 \in \cY_0(k)$.
Choose a system of points $y_m \in \cY_m(k)$ with $\pi_{m+1,m}(y_{m+1}) = y_m$ for all $m \ge 0$.
Set $V_m \colonequals \Spec \cO^\wedge_{\cY_m,y_m}$.
Let $Z_m \subset V_m$ be the restriction of $\cD_m$ to $V_m$ and $U_m \colonequals V_m \smallsetminus Z_m \subseteq V_m$ be its complement.

Let $H_m \colonequals \Aut(y_m)$ be the automorphism group scheme of $y_m$.
It acts naturally on $V_m$.
Since $\cD_m \subset \cY_m$ is a substack, $U_m$ and $Z_m$ are invariant under this action.
We obtain diagrams
\[ \begin{tikzcd}
    \left[U_m/H_m\right] \arrow[r,hook,"\hat{\jmath}_m"] \arrow[d] & \left[V_m/H_m\right] \arrow[d,"\hat{\pi}_m"] & \left[Z_m/H_m\right] \arrow[l,hook',"\hat{\imath}_m"'] \arrow[d] \\
    \left[U_0/H_0\right] \arrow[r,hook,"\hat{\jmath}_0"] & \left[V_0/H_0\right] & \left[Z_0/H_0\right] \arrow[l,hook',"\hat{\imath}_0"'],
\end{tikzcd} \]
where $\hat{\jmath}_m$ is an open and $\hat{\imath}_m$ a closed immersion.
Denote the pullback of $\Lambda_m$ to $\left[V_m/H_m\right]$ by $\widehat{\Lambda}_m$;
we have $\widehat{\Lambda}_m = \hat{\pi}^*_m \widehat{\Lambda}_0$ for all $m \ge 0$.
\begin{lem}\label{lem:local-ramification}
  For any $\ell,n \ge 0$, the pullback morphism
  \[ \cH^\ell\bigl((\R\hat{\pi}_{n,*}\hat{\imath}_{n,*}\R \hat{\imath}^!_n \widehat{\Lambda}_n)_{y_0}\bigr) \to \cH^\ell\bigl((\R\hat{\pi}_{n+\ell,*}\hat{\imath}_{n+\ell,*}\R \hat{\imath}^!_{n+\ell} \widehat{\Lambda}_{n+\ell})_{y_0}\bigr) \]
  of $\F_p$-vector spaces is $0$.
\end{lem}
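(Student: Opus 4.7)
The plan is to combine the excision distinguished triangle on $[V_n/H_n]$,
\[ \hat{\imath}_{n,*}\R\hat{\imath}^!_n \widehat{\Lambda}_n \to \widehat{\Lambda}_n \to \R\hat{\jmath}_{n,*}\hat{\jmath}^*_n \widehat{\Lambda}_n \xrightarrow{+1}, \]
with the tameness of $\cY_n$ to reduce the claim to the vanishing of pullbacks on $\Hh^{\ge 1}(U_n,\F_p)$ furnished by Lemma~\ref{lem:m-ramified-vanishing}.

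First, proper base change combined with the triviality of higher \'etale cohomology on the strictly Henselian complete local scheme $V_0$ identifies $(\R\hat{\pi}_{n,*}F)_{y_0} \simeq \R\Gamma([V_n/H_n],F)$ for every constructible $\F_p$-sheaf $F$ on $[V_n/H_n]$. Setting
\begin{align*}
  A_n &\colonequals \R\Gamma([V_n/H_n], \hat{\imath}_{n,*}\R\hat{\imath}^!_n \widehat{\Lambda}_n), \\
  B_n &\colonequals \R\Gamma([V_n/H_n], \widehat{\Lambda}_n), \\
  C_n &\colonequals \R\Gamma([U_n/H_n], \hat{\jmath}^*_n \widehat{\Lambda}_n),
\end{align*}
the excision triangle becomes $A_n \to B_n \to C_n \xrightarrow{+1}$ in $\D(\F_p)$. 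By tameness, $\abs{H_n}$ is coprime to $p$, so the Leray spectral sequence for the atlas $V_n \to [V_n/H_n]$ degenerates and yields $\Hh^i(B_n) \simeq \Hh^i(V_n,\widehat{\Lambda}_n|_{V_n})^{H_n}$ and $\Hh^i(C_n) \simeq \Hh^i(U_n,\widehat{\Lambda}_n|_{U_n})^{H_n}$ for all $i$.

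Because $V_n$ is strictly Henselian, $\widehat{\Lambda}_n|_{V_n}$ is the constant sheaf with stalk $\Lambda \colonequals \widehat{\Lambda}_{n,y_n}$; hence $\Hh^i(B_n) = 0$ for $i > 0$, $\Hh^0(B_n) = \Lambda^{H_n}$, and the restriction $\Hh^0(B_n) \to \Hh^0(C_n)$ is the identity on $\Lambda^{H_n}$. The long exact sequence of the excision triangle therefore gives $\Hh^\ell(A_n) = 0$ for $\ell \in \{0,1\}$ — so the target already vanishes in these cases — and, using that $\widehat{\Lambda}_n|_{U_n}$ is again constant with stalk $\Lambda$,
\[ \Hh^\ell(A_n) \simeq \bigl(\Hh^{\ell-1}(U_n,\F_p) \otimes_{\F_p} \Lambda\bigr)^{H_n} \quad \text{for all } \ell \ge 2. \]

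For such $\ell \ge 2$, the pullback $\hat{\pi}^*_{n+\ell,n}$ acts on this group through the pullback on the first tensor factor $\Hh^{\ell-1}(U_n,\F_p) \to \Hh^{\ell-1}(U_{n+\ell},\F_p)$. Decomposing this as $\hat{\pi}^*_{n+\ell,n+\ell-1} \circ \dotsb \circ \hat{\pi}^*_{n+1,n}$ and observing that $\hat{\pi}_{n+1,n}$ is $p$-ramified over $Z_n$, Lemma~\ref{lem:m-ramified-vanishing} implies that already $\hat{\pi}^*_{n+1,n}$ annihilates $\Hh^{\ell-1}(U_n,\F_p)$ (as $\ell - 1 \ge 1$); hence so does the full composition, and taking $H_n$-invariants preserves this vanishing. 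The main technical subtlety is the initial identification of stalks with global sections over the quotient stacks $[V_n/H_n]$, which I expect to follow from proper base change for the scheme morphism $V_n \to V_0$ combined with the cohomological vanishing on strictly Henselian schemes.
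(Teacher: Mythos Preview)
Your argument contains a fundamental error: you claim that tameness of $\cY_n$ implies $|H_n|$ is coprime to $p$, and hence that the Leray spectral sequence for $V_n \to [V_n/H_n]$ degenerates. This is false. Tameness of a Deligne--Mumford stack means the automorphism groups have order invertible in $k$, i.e., coprime to $\charac k$---not coprime to $p$. In the paper's main application ($\cY_n = \overline{\cM_g}[p^n]$ over $k$ with $\charac k \neq p$), the groups $H_{y_n}$ are explicitly subgroups of $\prod_i \mu_{p^n}$ (Lemma~\ref{lem:description-AutC}, Example~\ref{exmp:description-Hy}), so $|H_n|$ is a power of $p$ and the group cohomology $\Hh^{>0}(K_n,-)$ with $\F_p$-coefficients need not vanish.

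This is precisely why the paper's argument is more elaborate. After correctly identifying the stalk as $\Hh^{\ell-1}\bigl(K_m,\tau^{>0}\R\Gamma(U_m,\F^{\oplus r}_p)\bigr)$ with $K_m = \ker(H_m \to H_0)$, one cannot collapse to invariants. Instead the paper uses the hypercohomology spectral sequence $\Hh^p\bigl(K_m,\Hh^q(U_m,\F^{\oplus r}_p)\bigr) \Rightarrow \Hh^{p+q}$: Lemma~\ref{lem:m-ramified-vanishing} shows each single transition map is zero on the $E_2$-page (for $q>0$), hence zero on the associated graded of the abutment, so the composite of $\ell$ such maps annihilates the full $\ell$-step filtration on $\Hh^{\ell-1}$. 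Your approach, if it worked, would show that a \emph{single} transition map already suffices---stronger than what is claimed and false in general when $p \mid |K_n|$. (There is also a secondary inaccuracy: the fiber of $\hat\pi_n$ over $y_0 \colon \Spec k \to [V_0/H_0]$ is $[\Spec k/K_n]$, not $[V_n/H_n]$, so even the group appearing in your formulas should be $K_n$ rather than $H_n$.)
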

Here, the stalk at $y_0$ is understood to be the derived pullback under $y_0 \colon \Spec k \to \cY_0$.
\begin{proof}
  For any $m \ge 0$, the exceptional inverse image fits into the distinguished triangle
  \begin{equation}\label{eqn:local-cohom-triangle}
    (\R\hat{\pi}_{m,*}\widehat{\Lambda}_m)_{y_0} \to (\R\hat{\pi}_{m,*}\R \hat{\jmath}_{m,*}\hat{\jmath}^*_m \widehat{\Lambda}_m)_{y_0} \to \bigl(\R\hat{\pi}_{m,*}\hat{\imath}_{m,*}\R \hat{\imath}^!_m \widehat{\Lambda}_m\bigr)_{y_0}[1] \to.
  \end{equation}
  Set $K_m \colonequals \ker(H_m \to H_0)$.
  Via base change (cf.\ e.g.\ \cite[Thm.~A.0.2]{MR2007376}) across the diagram of fiber squares
  \[ \begin{tikzcd}
      & \left[U_m/K_m\right] \arrow[d,hook,"\hat{\jmath}'"] \arrow[r,"\alpha'"] & \left[U_m/H_m\right] \arrow[d,hook,"\hat{\jmath}_m"] \\
      \left[\Spec k/K_m\right] \arrow[d,"\hat{\rho}"] \arrow[r,"\tilde{y}_m"] & \left[V_m/K_m\right] \arrow[d,"\hat{\pi}'"] \arrow[r,"\alpha_m"] & \left[V_m/H_m\right] \arrow[d,"\hat{\pi}_m"] \\
      \Spec k \arrow[r,"\tilde{y}_0"] & V_0 \arrow[r,"\alpha_0"] & \left[V_0/H_0\right]
  \end{tikzcd} \]
  (in which $\tilde{y}_0$ is the lift of $y_0$ and $\alpha_0$ is the canonical \'etale atlas), (\ref{eqn:local-cohom-triangle}) is identified with
  \[ \R\hat{\rho}_*\tilde{y}^*_m\hat{\pi}^{\prime,*}\alpha^*_0\widehat{\Lambda}_0 \to \R\hat{\rho}_*\tilde{y}^*_m\R \hat{\jmath}'_*\hat{\jmath}^{\prime,*}\hat{\pi}^{\prime,*}\alpha^*_0\widehat{\Lambda}_0 \to \bigl(\R\hat{\pi}_{m,*}\hat{\imath}_{m,*}\R \hat{\imath}^!_m \widehat{\Lambda}_m\bigr)_{y_0}[1] \to. \]

  Since $V_0$ is strictly henselian, $\alpha^*_0\widehat{\Lambda}_0 \simeq \F^{\oplus r}_p$, where $r$ is the rank of $\Lambda_0$.
  Thus, $\tilde{y}^*_m\hat{\pi}^{\prime,*}\alpha^*_0\widehat{\Lambda}_0 \simeq \F^{\oplus r}_p$ with the trivial $K_m$-action and $\tilde{y}^*_m\R \hat{\jmath}'_*\hat{\jmath}^{\prime,*}\hat{\pi}^{\prime,*}\alpha^*_0\widehat{\Lambda}_0 \simeq \R\Gamma(U_m,\F^{\oplus r}_p)$ with $K_m$-action induced by the $K_m$-action on $U_m$.
  Moreover, $\tilde{y}^*_m\hat{\pi}^{\prime,*}\alpha^*_0\widehat{\Lambda}_0 \to \tilde{y}^*_m\R \hat{\jmath}'_*\hat{\jmath}^{\prime,*}\hat{\pi}^{\prime,*}\alpha^*_0\widehat{\Lambda}_0$ is the natural morphism $\tau^{\leq 0}\bigl(\R\Gamma(U_m,\F^{\oplus r}_p)\bigr) \to \R\Gamma(U_m,\F^{\oplus r}_p)$, where $\tau$ denotes the truncation with respect to the natural t-structure on the derived category of $\F_p[K_m]$-modules \cite[Prop.~1.3.3]{MR751966}.
  This proves
  \[ \bigl(\R\hat{\pi}_{m,*}\hat{\imath}_{m,*}\R \hat{\imath}^!_m\widehat{\Lambda}_m\bigr)_{y_0} \simeq \R\hat{\rho}_*\bigl(\tau^{>0}\bigl(\R\Gamma(U_m,\F^{\oplus r}_p)\bigr)\bigr)[-1] \simeq \R\Gamma\bigl(K_m,\tau^{>0}\bigl(\R\Gamma(U_m,\F^{\oplus r}_p)\bigr)\bigr)[-1] \]
  and hence $\cH^\ell\bigl((\R\hat{\pi}_{m,*}\hat{\imath}_{m,*}\R \hat{\imath}^!_m\widehat{\Lambda}_m)_{y_0}\bigr) \simeq \Hh^{\ell-1}\bigl(K_m,\tau^{>0}\bigl(\R\Gamma(U_m,\F^{\oplus r}_p)\bigr)\bigr)$.

  It remains to show that the pullback morphism
  \[ \hat{\pi}^*_{n+\ell,n} \colon \Hh^{\ell-1}\bigl(K_n,\tau^{>0}\bigl(\R\Gamma(U_n,\F^{\oplus r}_p)\bigr)\bigr) \to \Hh^{\ell-1}\bigl(K_{n+\ell},\tau^{>0}\bigl(\R\Gamma(U_{n+\ell},\F^{\oplus r}_p)\bigr)\bigr) \]
  is $0$ for all $\ell$.
  For $n \le m \le n+\ell-1$, $\hat{\pi}^*_{m+1,m}$ is induced by the natural morphism of hypercohomology spectral sequences
  \[ \begin{tikzcd}
      \Hh^p\bigl(K_m,\cH^q\bigl(\tau^{>0}(\R\Gamma(U_m,\F^{\oplus r}_p))\bigr)\bigr) \arrow[r,phantom,"\Longrightarrow"] \arrow[d,"\cH^q(\hat{\pi}^*_{m+1,m})"] & \Hh^{\ell-1}\bigl(K_m,\tau^{>0}(\R\Gamma(U_m,\F^{\oplus r}_p))\bigr) \arrow[d,"\hat{\pi}^*_{m+1,m}"] \\
      \Hh^p\bigl(K_{m+1},\cH^q\bigl(\tau^{>0}(\R\Gamma(U_{m+1},\F^{\oplus r}_p))\bigr)\bigr) \arrow[r,phantom,"\Longrightarrow"] & \Hh^{\ell-1}(K_{m+1},\tau^{>0}(\R\Gamma(U_{m+1},\F^{\oplus r}_p))\bigr).
  \end{tikzcd} \]
  This follows from the functoriality properties of the Cartan--Eilenberg resolution, which is used in the construction of the hypercohomology spectral sequence, and of the injective resolutions from which the pullback maps are computed.

  The morphisms
  \[ \cH^q\bigl(\R\Gamma(U_m,\F^{\oplus r}_p)\bigr) \to \cH^q\bigl(\R\Gamma(U_{m+1},\F^{\oplus r}_p)\bigr) \]
  of $\F_p[K_{m+1}]$-modules are $0$ for all $q > 0$:
  this can be checked on the \'etale cover $\Spec k \to \left[\Spec k/K_{m+1}\right]$, where it follows from Definition~\ref{defn:m-ramified}, Lemma~\ref{lem:m-ramified-vanishing}, and the additivity of the cohomology functors.
  Therefore, the morphisms between the second pages are $0$, and so are the morphisms between the graded rings associated to the induced $\ell$-step filtrations of the abutments.
  Consequently, $\hat{\pi}^*_{n+\ell,n} = \hat{\pi}^*_{n+\ell,n+\ell-1} \circ \dotsb \circ \hat{\pi}^*_{n+1,n} = 0$.
\end{proof}
We return to the setting of Theorem~\ref{thm:tower-cohomology}.
\begin{lem}\label{lem:global-ramification}
  For all $y_0 \in \cY_0(k)$ and all $\ell,n \ge 0$, the pullback morphism
  \[ \cH^\ell\bigl(\R\pi_{n,*}i_{n,*}\R i^!_n\Lambda_n\bigr)_{y_0} \to \cH^\ell\bigl(\R\pi_{n+\ell,*}i_{n+\ell,*}\R i^!_{n+\ell}\Lambda_{n+\ell}\bigr)_{y_0} \]
  of $\F_p$-vector spaces is $0$.
\end{lem}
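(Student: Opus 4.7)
The plan is to reduce the global stalk computation to the completed local picture at $y_0$ analyzed in Lemma~\ref{lem:local-ramification}, then apply that lemma directly.

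First, I would pass to the strict henselization of $\cY_0$ at $y_0$. Since $\pi_n$ is proper and quasi-finite, hence representable and finite, proper base change identifies $\cH^\ell\bigl(\R\pi_{n,*}i_{n,*}\R i^!_n\Lambda_n\bigr)_{y_0}$ with the $\ell$-th étale cohomology of the local cohomology complex on the fiber $\pi_n^{-1}(y_0)$ viewed as a substack over the strict henselization. Next, by invariance of étale cohomology of constructible sheaves under passage from the strict henselization to its completion (as in the arguments used for the setup of Lemma~\ref{lem:local-ramification}; cf.\ \cite[Thm.~A.0.2]{MR2007376}), I may replace the strict henselization by $V_0 = \Spec \cO^\wedge_{\cY_0,y_0}$. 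After this reduction, the preimage of $y_0$ in $\cY_n$ under the finite morphism $\pi_n$ is a finite disjoint union of closed points $y_{n,\alpha} \in \cY_n(k)$, and the base change of $\cY_n \to \cY_0$ to $[V_0/H_0]$ breaks up into a disjoint union of quotient stacks $[V_{n,\alpha}/H_{n,\alpha}]$, one for each preimage $\alpha$, in the notation preceding Lemma~\ref{lem:local-ramification}.

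Accordingly, the stalk at $y_0$ decomposes as a direct sum
\[ \cH^\ell\bigl(\R\pi_{n,*}i_{n,*}\R i^!_n \Lambda_n\bigr)_{y_0} \simeq \bigoplus_\alpha \cH^\ell\bigl((\R\hat{\pi}_{n,*}\hat{\imath}_{n,*}\R\hat{\imath}^!_n \widehat{\Lambda}_n)_{y_0}\bigr), \]
and similarly at level $n+\ell$, and the pullback morphism in the statement respects this decomposition: each summand on the left is mapped into the direct sum of those summands on the right that are indexed by preimages of $y_{n,\alpha}$ in $\cY_{n+\ell}$. In particular, after choosing any compatible system of closed points $y_{n,\alpha} \mapsto y_{n+1,\alpha'} \mapsto \dotsb \mapsto y_{n+\ell,\alpha''}$, the component of the pullback map between the corresponding summands is exactly the map studied in Lemma~\ref{lem:local-ramification}.

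Since Lemma~\ref{lem:local-ramification} asserts that each such component vanishes after $\ell$ transitions, the total map vanishes as well, which proves the claim. The only subtlety I anticipate is making the identification of the base change $\cY_n \times_{\cY_0} [V_0/H_0]$ with $\bigsqcup_\alpha [V_{n,\alpha}/H_{n,\alpha}]$ precise on the level of quotient stacks (keeping track of the $H_0$-action on the set of preimages and the induced isomorphisms $H_{n,\alpha} \hookrightarrow H_0$), but this is a formal consequence of $\pi_n$ being finite and the definition of the automorphism groups $H_m$; once set up, the compatibility of transition maps under these identifications is automatic and the reduction to Lemma~\ref{lem:local-ramification} is immediate.
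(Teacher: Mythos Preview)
Your overall strategy—reduce to the completed local picture and invoke Lemma~\ref{lem:local-ramification}—matches the paper's. But there is one genuine error and one place where you gloss over the main technical content.

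The error: you assert that $\pi_n$, being proper and quasi-finite, is ``hence representable and finite.'' This is false in the setting of Theorem~\ref{thm:tower-cohomology}. The principal application is to the tower $\overline{\cM_g}[p^n]$, and the paper explicitly notes (\S\ref{sect:moduli-curves}) that the maps $\overline{\cM_g}[m] \to \overline{\cM_g}$ are \emph{not} representable. Proper base change still applies to proper morphisms of Deligne--Mumford stacks, so this slip does not actually break your argument, but you should remove the false implication.

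The gap: you invoke ``invariance of \'etale cohomology under passage from strict henselization to completion'' and claim the base change of $\cY_n$ to $[V_0/H_0]$ ``breaks up into a disjoint union of quotient stacks $[V_{n,\alpha}/H_{n,\alpha}]$'' as if these were routine. These are precisely the two points the paper has to work for. The paper defines the completion $\cY^\wedge_{m,y_m}$ via the coarse space, $\cY^\wedge_{m,y_m} \colonequals \cY_m \times_{Y_m} \Spec \cO^\wedge_{Y_m,\bar{y}_m}$, and then (i) uses the theorem on formal functions on the \emph{coarse spaces} (where the induced maps $\bar{\pi}_m$ really are finite) to obtain the disjoint-union decomposition, and (ii) proves a separate base-change lemma (Lemma~\ref{lem:base-change}) showing that both $\R f_*$ and $f_*\R f^!$ commute with pullback along $\cY^\wedge_{0,y_0} \to \cY_0$, by writing the completion as a filtered limit of smooth morphisms via N\'eron--Popescu desingularization. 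Your sketch does not supply either of these ingredients, and ``strict henselization of $\cY_0$ at $y_0$'' is not a well-defined object without saying whether you mean via an \'etale chart or via the coarse space. Once you make these points precise, your argument becomes the paper's argument.
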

\begin{proof}
  For all $m \ge 0$, let $Y_m$ be the coarse space of $\cY_m$ and $\bar{\pi}_m \colon Y_m \to Y_0$ be the map induced by $\pi_m$.
  For any $y_m \in \cY_m(k)$, the corresponding point in $Y_m(k)$ will be denoted by $\bar{y}_m$.
  Let $\cY^\wedge_{m,y_m} \colonequals \cY_m \times_{Y_m} \Spec \cO^\wedge_{Y_m,\bar{y}_m}$ be the completion of $\cY_m$ at $y_m$.
  In the previous notation, we have $\cY^\wedge_{m,y_m} = \left[V_m/H_m\right]$, and similarly $\cU_m \times_{\cY_m} \cY^\wedge_{m,y_m} = \left[U_m/H_m\right]$ and $\cD_m \times_{\cY_m} \cY^\wedge_{m,y_m} = \left[Z_m/H_m\right]$ (cf.\ the proof of \cite[Thm.~11.3.1]{MR3495343}).

  The maps $\bar{\pi}_m \colon Y_m \to Y_0$ are still quasi-finite and proper, hence finite.
  Thus, by the theorem on formal functions 
  \[ Y^\wedge_{0,\bar{y}_0} \times_{Y_0} Y_m \simeq \Spec \bigl(\bar{\pi}_*\cO_{Y_m}\bigr)^\wedge_{\bar{y}_m} \simeq \bigsqcup_{\bar{\pi}_m(\bar{y}_m)=\bar{y}_0}Y^\wedge_{m,\bar{y}_m}. \]
  As $\cY^\wedge_{0,y_0} \times_{\cY_0} \cY_m \simeq \bigl(Y^\wedge_{0,\bar{y}_0} \times_{Y_0} Y_m\bigr) \times_{Y_m} \cY_m$, the diagram
  \[ \begin{tikzcd}
      \bigsqcup_{\pi_m(y_m)=y_0}\cY^\wedge_{m,y_m} \arrow[r] \arrow[d] & \cY_m \arrow[d] \\
      \cY^\wedge_{0,y_0} \arrow[r] & \cY_0
  \end{tikzcd} \]
  is cartesian.

  Since $y_0 \colon \Spec k \to \cY_0$ factors through $\cY^\wedge_{0,y_0}$, Lemma~\ref{lem:base-change} shows that the $\cH^\ell\bigl(\R\pi_{m,*}i_{m,*}\R i^!_m\Lambda_m\bigr)_{y_0}$ as well as the morphisms between them may be computed on completions.
  The statement therefore follows from Lemma~\ref{lem:local-ramification} and exactness of the stalk functor.
\end{proof}
\begin{lem}\label{lem:base-change}
  Let $f \colon \cW \to \cY$ be a morphism of finite type between noetherian Deligne--Mumford stacks over $k$.
  Assume the coarse moduli space $Y$ of $\cY$ is excellent.
  Let $y \in \cY(k)$, giving rise to the cartesian square
  \[ \begin{tikzcd}
      \cW \times_\cY \cY^\wedge_{y} \arrow[r,"h'"] \arrow[d,"\hat{f}"] & \cW \arrow[d,"f"] \\
      \cY^\wedge_{y} \arrow[r,"h"] & \cY.
  \end{tikzcd} \]
  Let $F$ be an \'etale abelian torsion sheaf on $\cW$ whose torsion order is invertible in $k$.
  Then
  \begin{enumerate}[label={\upshape(\roman*)}]
    \item\label{lem:base-change-*} $h^*\R f_*F \simeq \R\hat{f}_*h^{\prime,*}F$
    \item\label{lem:base-change-!} If $f$ is a closed immersion, $h^*f_*\R f^! F \simeq \hat{f}_*\R\hat{f}^!h^* F$.
  \end{enumerate}
\end{lem}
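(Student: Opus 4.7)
The plan is to reduce both parts to a base change statement for schemes and then invoke the classical invariance of étale cohomology under completion of excellent henselian local rings. For part~\ref{lem:base-change-*}, first fix an étale atlas $\alpha\colon Y_0 \to \cY$ from a scheme with $y$ lifting to some $y_0 \in Y_0(k)$, and pull $f$ back to a morphism of schemes $f_0 \colon W_0 \to Y_0$. Since étale pullback commutes with $\R f_*$ and with the formation of completed local rings, the assertion reduces to the case in which $\cY = Y$ is a scheme and $\cY^\wedge_y = \Spec \cO^\wedge_{Y,\bar y}$.

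In the scheme case, I would check the base change morphism $h^*\R f_*F \to \R\hat f_*h^{\prime,*}F$ on stalks at each geometric point $\bar z$ of $\Spec \cO^\wedge_{Y,\bar y}$. Both sides can be expressed, via the standard description of stalks, as cohomology of $F$ pulled back to $W \times_Y \Spec B$ for suitable excellent henselian local rings $B$, and the comparison amounts precisely to the Gabber--Fujiwara theorem: for $A$ a noetherian excellent henselian local ring with completion $\hat A$ and $g\colon W \to \Spec A$ of finite type, the natural map $\R\Gamma(W,F) \to \R\Gamma(W \times_{\Spec A} \Spec \hat A,F)$ is an isomorphism for any constructible torsion sheaf $F$ of order invertible in $A$. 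Excellence of $Y$ propagates to all strict henselizations appearing in the argument, so the hypotheses are satisfied.

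For part~\ref{lem:base-change-!}, let $j \colon \cV \hookrightarrow \cY$ be the open complement of $f$ and $\hat\jmath$ its pullback under $h$. The excision distinguished triangle
\[ f_*\R f^!F \to F \to \R j_*j^*F \to \]
pulls back under $h^*$ and fits into a morphism of triangles with
\[ \hat f_*\R \hat f^! h^*F \to h^*F \to \R \hat\jmath_*\hat\jmath^*h^*F \to \]
on $\cY^\wedge_y$ via the natural base change morphisms. Since $j$ is of finite type (all stacks being noetherian), part~\ref{lem:base-change-*} applied to $j$ implies that the comparison on the third terms is an isomorphism. Since the middle map is the identity on $h^*F$, the first map is an isomorphism as well.

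The main technical obstacle is upgrading the cohomological form of the Gabber--Fujiwara theorem to a statement about the derived pushforward as a sheaf on $\Spec \cO^\wedge_{Y,\bar y}$: one must identify arbitrary étale neighborhoods of points of $\Spec \cO^\wedge_{Y,\bar y}$ with suitable completions of étale neighborhoods of the strict henselizations of $Y$, which relies on Artin approximation together with excellence. If one is willing to cite a packaged version of this compatibility from the literature (e.g.\ the base change results used in \cite[App.~A]{MR2007376}), the argument becomes essentially a bookkeeping exercise.
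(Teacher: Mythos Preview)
Your argument for part~\ref{lem:base-change-!} is exactly the paper's: both reduce to part~\ref{lem:base-change-*} applied to the open complement $j$ via the excision triangles.

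For part~\ref{lem:base-change-*}, your approach is correct but the paper's is different and cleaner. Rather than reducing to schemes and checking on stalks via Gabber--Fujiwara, the paper applies N\'eron--Popescu desingularization directly: excellence of $Y$ gives that $Y^\wedge_{\bar y} \to Y$ is a cofiltered limit of smooth morphisms, and after base change to $\cY$ the same holds for $h \colon \cY^\wedge_y \to \cY$. Then smooth base change at each finite level plus passage to the colimit finishes. This completely avoids the ``main technical obstacle'' you flag---there is no need to analyze stalks at non-closed points of the completion or to upgrade a global-sections statement to a sheaf-level one, because smooth base change is already a statement about $\R f_*$ as a complex of sheaves. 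In the excellent case, Gabber--Fujiwara is itself proved via N\'eron--Popescu plus smooth base change, so the two routes have the same underlying content; the paper simply unpacks the black box one level and applies the ingredients directly, which is shorter and keeps everything at the stack level without the atlas reduction.
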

\begin{proof}
  \ref{lem:base-change-*}.
  By N\'eron--Popescu desingularization \cite{MR868439} and excellence of $Y$, the natural map $Y^\wedge_{y} \to Y$ is a cofiltered limit of smooth morphisms.
  Taking fiber products with $\cY$, we see that $h \colon \cY^\wedge_{y} \to \cY$ is a cofiltered limit of smooth morphisms $h_\nu \colon \cY_\nu \to \cY$.
  For each such morphism, we have the following fiber square:
  \[ \begin{tikzcd}
      \cW \times_\cY \cY_\nu \arrow[r,"h'_\nu"] \arrow[d,"f_\nu"] & \cW \arrow[d,"f"] \\
      \cY_\nu \arrow[r,"h_\nu"] & \cY.
  \end{tikzcd} \]
  Let further $q_\nu \colon \cY^\wedge_{y} \to \cY_\nu$ denote the canonical morphisms.
  By smooth base change,
  \[ h^*\R f_*F \simeq \hocolim q^*_\nu h^*_\nu\R f_*F \simeq \hocolim q^*_\nu\R f_{\nu,*}h^{\prime,*}_\nu F \simeq \R\hat{f}_*h^{\prime,*}F. \]

  \ref{lem:base-change-!}.
  Let $j \colon \cU \hookrightarrow \cY$ be the complement of $\cW$ with its canonical open substack structure.
  By \ref{lem:base-change-*}, $h^*\R j_*j^* F \simeq \R\hat{\jmath}_*\hat{\jmath}^*h^* F$.
  Using the exact triangles
  \[ h^*f_*\R f^!F \to h^*F \to h^*\R j_*j^*F \to \quad \text{and} \quad \hat{f}_*\R\hat{f}^!h^*F \to h^*F \to \R\hat{\jmath}_*\hat{\jmath}^*h^* F \to, \]
  we see that likewise $h^*f_*\R f^! F \simeq \hat{f}_*\R\hat{f}^!h^*F$.
\end{proof}
\begin{proof}[Proof of Theorem~\ref{thm:tower-cohomology}]
  The projective system $\cY_n$ gives rise to the distinguished triangle
  \[ \hocolim_n \R\pi_{n,*} i_{n,*} \R i^!_n \Lambda_n \to \hocolim_n \R\pi_{n,*} \Lambda_n \to \hocolim_n \R\pi_{n,*} \R j_{n,*} j^*_n \Lambda_n \to. \]
  It suffices to show that $\cH^\ell(\hocolim_n \R\pi_{n,*} i_{n,*} \R i^!_n \Lambda_n) = 0$ for all $\ell$.
  This can be checked on the stalks at all finite type points $y_0 \in \cY_0(k)$.
  By \cite[\href{https://stacks.math.columbia.edu/tag/0CRK}{Lem.~0CRK}]{stacks-project} and the cocontinuity of pullback functors, $\cH^\ell(\hocolim_n \R\pi_{n,*} i_{n,*} \R i^!_n \Lambda_n)_{y_0} \simeq \colim_n \cH^\ell\bigl(\R\pi_{n,*}i_{n,*}\R i^!_n \Lambda_n\bigr)_{y_0}$, so that the assertion follows from Lemma~\ref{lem:global-ramification}.
\end{proof}
Inspired by Theorem~\ref{thm:perfectoid-Artin}, one might try to generalize Theorem~\ref{thm:tower-cohomology} and ask whether for an arbitrary constructible sheaf $F_0$ on $\cY_0$ with pullbacks $F_n$ to $\cY_n$, the natural map
\[ \colim_n \Hh^i(\cY_n,F_n) \to \colim_n \Hh^i(\cU_n,F_n) \]
is still an isomorphism for all $i \ge 0$.
Any such hope is quickly shattered by the next example.
\begin{exmp}
  Let $\cY_n \colonequals \PP^1_k$ for all $n \in \Z_{\ge 0}$, with the transition maps given by the mock Frobenii from Example~\ref{exmp:proj-perfectoid} and hence $p$-ramified as in Example~\ref{exmp:coordinate-hyperplanes}.
  Let $i_0 \colon \Spec k \hookrightarrow \cY_0$ be the inclusion of the closed point $[0:1] \in \PP^1_k$.
  The pullback of $i_0$ to $\cY_n$ is the closed immersion $i_n \colon \Spec k[x]/\bigl(x^{p^n}\bigr) \hookrightarrow \PP^1_k$ corresponding to the $p^n$-th infinitesimal thickening of $[0:1]$ in $\PP^1_k$.
  For $F_0 \colonequals i_{0,*}\F_p$, we have $F_n = i_{n,*}\F_p$.
  Let $\cU_n$ be the complement of $[0:1]$ in $\cY_n$.
  Then by the topological invariance of the \'etale site,
  \[ \colim_n \Hh^0(\cY_n,F_n) \simeq \colim_n \Hh^0\bigl(\Spec k[x]/\bigl(x^{p^n}\bigr),\F_p\bigr) \simeq \colim_n \Hh^0(\Spec k,\F_p) = \F_p, \]
  whereas $\restr{F_n}{\cU_n} \simeq 0$ and thus $\colim_n \Hh^0(\cU_n,F_n) \simeq 0$.
  Hence, the map from Theorem~\ref{thm:tower-cohomology} is not an isomorphism in this case.
\end{exmp}

\section{Moduli spaces of curves}\label{sect:moduli-curves}

\subsection{The moduli problems}

We review different moduli spaces of curves from the literature, in part to fix some notation.
Let $k$ be an algebraically closed field.
Fix an integer $g \ge 2$.
We begin with a stack-theoretic version of stable curves, which in this generality is taken from \cite[Def.~2.1, Prop.~2.3]{MR2786662}.
\begin{defn}\label{defn:twisted-curve}
  A \emph{twisted curve} is a flat, proper, tame algebraic stack $\cC \to S$ such that each geometric fiber $\cC_{\bar{s}}$ satisfies the following conditions:
  \begin{enumerate}[label={\upshape(\roman*)}]
    \item $\cC_{\bar{s}}$ is purely $1$-dimensional and connected;
    \item The coarse moduli space $C_{\bar{s}}$ is a nodal curve;
    \item The natural map $\cC_{\bar{s}} \to C_{\bar{s}}$ is an isomorphism over the smooth locus of $C_{\bar{s}}$;
    \item For the strictly henselian local ring $\cO^{\sh}_{C_{\bar{s}},x}$ at a node $x \in C_{\bar{s}}$, there is $m \in \Z_{>0}$ such that
      \[ \cC^{\sh}_{\bar{s},x} \colonequals \cC_{\bar{s}} \times_{C_{\bar{s}}} \Spec \cO^{\sh}_{C_{\bar{s}},x} \simeq \left[ \Spec (k[z,w]/(zw))^{\sh}/\mu_m \right], \]
      where $\zeta \in \mu_m$ acts on $\Spec (k[z,w]/(zw))^{\sh}$ by $z \mapsto \zeta z$ and $w \mapsto \zeta^{-1} w$.
  \end{enumerate}
\end{defn}
\begin{defn}[{\cite[Def.~6.1.1]{MR2007376}}]
  Let $m \in \Z_{>0}$.
  A \emph{pre-level-$m$ curve} is a twisted curve $\cC \to S$ whose coarse moduli space is a stable curve and whose geometric fibers have trivial stabilizer at each separating node and stabilizer $\mu_m$ at each non-separating node.
\end{defn}
Pre-level-$m$ curves without non-trivial stabilizers are exactly the curves of compact type.
\begin{defn}
  A stable curve $C \to S$ is \emph{of compact type} if each geometric fiber $C_{\bar{s}}$ satisfies one of the following equivalent conditions:
  \begin{enumerate}[label={\upshape(\roman*)}]
    \item The Jacobian $J(C_{\bar{s}})$ is compact;
    \item All nodes of $C_{\bar{s}}$ are separating;
    \item The dual graph of $C_{\bar{s}}$ is a tree.
  \end{enumerate}
\end{defn}
From now on, we will assume that $m \in \Z_{>0}$ is invertible in $k$.
In that case, we can endow pre-level-$m$ curves with level structures.
\begin{defn}[{\cite[\S~6]{MR2007376}}]
  Let $f\colon \cC \to S$ be a pre-level-$m$ curve of genus $g$.
  A \emph{full level-$m$ structure} on $\cC$ is the choice of a symplectic isomorphism of local systems
  \[ \Rr^1f_*(\Z/m\Z) \xrightarrow{\sim} \underline{(\Z/m\Z)^{2g}}. \]
\end{defn}
We consider the following smooth Deligne--Mumford stacks:
\begin{center}
  \begin{tabularx}{\linewidth}{cX}
    $\cM_g$ & moduli stack of smooth curves of genus $g$ over $k$ \\
    $\cM^{\cc}_g$ & moduli stack of curves of compact type of genus $g$ over $k$ \\
    $\overline{\cM_g}$ & moduli stack of stable curves of genus $g$ over $k$ \\
    $\cM_g[m]$ & moduli stack of smooth curves of genus $g$ over $k$ with full level-$m$ structure \\
    $\cM^{\cc}_g[m]$ & moduli stack of curves of compact type of genus $g$ over $k$ with full level-$m$ structure \\
    $\overline{\cM_g}[m]$ & moduli stack of pre-level-$m$ curves of genus $g$ over $k$ with full level-$m$ structure.
  \end{tabularx}
\end{center}
We use Roman letters (i.e., $M_g$, $M^{\cc}_g$, etc.) for the corresponding coarse spaces.
By covering space theory, $\overline{\cM_g}[m]$ can be identified with the stack of connected $(\Z/m\Z)^{2g}$-torsors over twisted curves with stable coarse moduli space, rigidified along $(\Z/m\Z)^{2g}$ \cite[Thm.~6.2.4]{MR2007376}.
There are natural open embeddings $\cM_g \subset \cM^{\cc}_g \subset \overline{\cM_g}$.
The natural covers $\cM_g[m] \to \cM_g$ and $\cM^{\cc}_g[m] \to \cM^{\cc}_g$ are finite \'etale.
Although the cover $\overline{\cM_g}[m] \to \overline{\cM_g}$ is flat, proper, and quasi-finite \cite[Cor.~3.0.5]{MR2007376}, it is not representable \cite[Rem.~5.2.4.(b)]{MR2007376} and highly ramified over $\overline{\cM_g} \smallsetminus \cM^{\cc}_g$ (see Lemma~\ref{lem:Mg-p-ramified}).

\subsection{Local structure of $\overline{\cM_g}[m]$}\label{subsect:local-structure}

Next, we summarize the description of the complete, local picture from \S~\ref{sect:boundary-vanishing} for a given point $y \in \overline{\cM_g}[m](k)$ in a sequence of well-known lemmas.
Most of what follows can be found in \cite{MR2007376}.

Fix $m \in \Z_{>0}$ invertible in $k$.
Set $G \colonequals (\Z/m\Z)^{2g}$.
The datum parametrized by $y$ is a pre-level-$m$ curve $\cC$ over $k$ of genus $g$ together with a full level-$m$ structure $\Hh^1(\cC,\Z/m\Z) \xrightarrow{\sim} G$, or equivalently a connected $G$-torsor $P \to \cC$.
Let $C$ be the coarse space of $\cC$.
Assume that $C$ has non-separating nodes $x_1,\dotsc,x_\gamma$ and separating nodes $x_{\gamma + 1},\dotsc,x_\delta$.

Since $\overline{\cM_g}[m]$ is Deligne--Mumford and $P \to \cC$ is \'etale, $\cO^\wedge_{\overline{\cM_g}[m],y}$ is the universal deformation ring of $\cC$.
Let $\fC \to \Spec \cO^\wedge_{\overline{\cM_g}[m],y}$ be the universal curve.
\begin{lem}\label{lem:description-deformation-ring}
  \hangindent\leftmargini
  \textup{(i)}\hskip\labelsep The completed local ring $\cO^\wedge_{\overline{\cM_g}[m],y}$ is regular and of dimension $3g-3$.
  \begin{enumerate}[label={\upshape(\roman*)}]
    \setcounter{enumi}{1}
    \item\label{lem:description-deformation-ring-local} There is an isomorphism $\cO^\wedge_{\overline{\cM_g}[m],y} \simeq k\llbracket t_1,\dotsc,t_{3g-3}\rrbracket$ such that the locus over which the node $x_i$ persists in $\mathfrak{C}$ is $\{t_i = 0\}$.
  \end{enumerate}
\end{lem}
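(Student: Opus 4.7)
The plan is to reduce the computation of $\cO^\wedge_{\overline{\cM_g}[m],y}$ to the deformation theory of the twisted curve $\cC$ alone, and then to unwrap the standard node-smoothing description to read off the coordinates $t_i$.

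First I would observe that the level structure at $y$, viewed as a connected $G$-torsor $P\to\cC$ with $G=(\Z/m\Z)^{2g}$, deforms uniquely along any infinitesimal thickening of $\cC$. Indeed, since $|G|$ is invertible in $k$, the torsor $P\to\cC$ is \'etale, so topological invariance of the \'etale site supplies a unique extension $P'\to\cC'$ of $P\to\cC$ along any square-zero thickening $\cC'$. Consequently, the versal deformation ring of the pair $(\cC,P)$ coincides with that of $\cC$, so $\cO^\wedge_{\overline{\cM_g}[m],y}$ is the versal deformation ring of the twisted curve $\cC$.

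Next, I would invoke the deformation theory of twisted nodal curves (as developed in \cite{MR2007376}) to conclude that this versal deformation ring is regular of dimension $3g-3$. To obtain the explicit coordinates, I would use the local smoothing model at each node: around $x_i$, with stabilizer $\mu_{m_i}$ (where $m_i=m$ for $i\le\gamma$ and $m_i=1$ for $\gamma<i\le\delta$), the universal local deformation is
\[ \bigl[\Spec\bigl((k[z,w,t_i]/(zw-t_i))^{\sh}\bigr)\big/\mu_{m_i}\bigr], \]
where $\zeta\in\mu_{m_i}$ acts by $z\mapsto\zeta z$, $w\mapsto\zeta^{-1}w$, and trivially on $t_i$. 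Since $t_i=zw$ is $\mu_{m_i}$-invariant, $t_i$ descends to a well-defined element of $\cO^\wedge_{\overline{\cM_g}[m],y}$, and $\{t_i=0\}$ is precisely the locus over which the node $x_i$ persists in the universal curve $\fC$. The global deformation space factors, via the forgetful map to deformations of the partial normalization at the nodes, as the product of the $\delta$ node-smoothing parameters $t_1,\dotsc,t_\delta$ with deformations of the smooth pieces; choosing a regular system of parameters $t_{\delta+1},\dotsc,t_{3g-3}$ for the latter yields the claimed presentation.

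The main subtlety, I expect, is justifying that the node-smoothing parameter recorded by the stacky deformation is $t_i$ itself rather than some $m_i$-th power of $t_i$. This hinges on the fact that $\mu_{m_i}$ acts trivially on $zw$, so the presentation $zw=t_i$ lives genuinely on the stack quotient and the \emph{stacky} deformation ring sees $t_i$ directly. The same computation shows that on the coarse moduli space, a node of type $\mu_{m_i}$ would instead contribute the parameter $t_i^{m_i}$, which is exactly the source of the ramification alluded to in Lemma~\ref{lem:Mg-p-ramified}.
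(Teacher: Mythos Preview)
Your proposal is correct and follows essentially the same approach as the paper: both reduce to the deformation theory of $\cC$ alone (via \'etaleness of $P\to\cC$), invoke \cite[\S~3]{MR2007376} for regularity and dimension, and single out the node-smoothing parameters $t_1,\dotsc,t_\delta$ as part of a regular system. The paper makes the ``product factoring'' step precise by using the local-to-global $\Ext$ spectral sequence to exhibit the surjection $\Ext^1(\Omega^1_\cC,\cO_\cC)\twoheadrightarrow\prod_i\Ext^1(\Omega^{1,\wedge}_{\cC,x_i},\cO^\wedge_{\cC,x_i})$ on tangent spaces and conclude formal smoothness of $k\llbracket t_1,\dotsc,t_\delta\rrbracket\to\cO^\wedge_{\overline{\cM_g}[m],y}$, whereas you assert the factoring more directly; your explicit treatment of the $\mu_{m_i}$-invariance of $t_i=zw$ is a useful addition that the paper's proof leaves implicit.
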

\begin{proof}
  (i).
  Since $\cO^\wedge_{\overline{\cM_g}[m],y}$ is the universal deformation ring of $\cC$, both properties follow from the usual analysis of the deformation theory of $\cC$;
  see \cite[\S~3]{MR2007376}.

  \ref{lem:description-deformation-ring-local}.
  The local-to-global Ext spectral sequence decomposes the tangent space $\Ext^1(\Omega^1_\cC,\cO_\cC)$ of $\cO^\wedge_{\overline{\cM_g}[m],y}$ via the short exact sequence
  \begin{equation}\label{eqn:local-global-principle}
    0 \to \Hh^1(\cC,\cT_\cC) \to \Ext^1\bigl(\Omega^1_\cC,\cO_\cC\bigr) \to \prod^\delta_{i=1} \Ext^1\bigl(\Omega^{1,\wedge}_{\cC,x_i},\cO^\wedge_{\cC,x_i}\bigr) \to 0
  \end{equation}
  into a ``global contribution'' from the left and a ``local contribution'' from the right term;
  cf.\ \cite[Prop.~1.5]{MR0262240}.
  To analyze the local part, note that for each node $x_i$, the completed local ring $\cO^\wedge_{\cC,x_i} \simeq k\llbracket z,w \rrbracket / (zw)$ of $\cC$ at $x_i$ admits the universal formal deformation $k \llbracket z,w,t_i \rrbracket / (zw - t_i)$.
  Thus, the universal deformation ring of the node is given by $k \llbracket t_i \rrbracket$ and its tangent space $\Ext^1\bigl(\Omega^{1,\wedge}_{\cC,x_i},\cO^\wedge_{\cC,x_i}\bigr)$ is of dimension $1$.

  Since the $\cO^\wedge_{\fC,x_i}$ are deformations of $\cO^\wedge_{\cC,x_i}$, there are natural morphisms $\varphi_i \colon k \llbracket t_i \rrbracket \to \cO^\wedge_{\overline{\cM_g}[m],y}$.
  Their completed tensor product
  \[ \varphi_1 \widehat{\otimes} \dotsb \widehat{\otimes} \varphi_\delta \colon k \llbracket t_1,\dotsc,t_\delta \rrbracket \to \cO^\wedge_{\overline{\cM_g}[m],y} \]
  is formally smooth because its tangent map from (\ref{eqn:local-global-principle}) is surjective.
  Therefore, we can choose an isomorphism $\cO^\wedge_{\overline{\cM_g}[m],y} \simeq \Spec k\llbracket t_1,\dotsc,t_{3g-3}\rrbracket$ so that $\varphi_i$ is identified with the inclusion of the $i$-th coordinate for all $1 \le i \le \delta$.
  In particular, the node $x_i$ persists over $\{t_i = 0\}$ in $\fC$.
\end{proof}
\begin{cor}\label{cor:non-ct-nc}
  The locus $\cD_m \colonequals \overline{\cM_g}[m] \smallsetminus \cM^{\cc}_g[m]$ of curves not of compact type with its reduced closed substack structure is a normal crossings divisor in $\overline{\cM_g}[m]$ whose pullback to $\cO^\wedge_{\overline{\cM_g}[m],y}$ is given by $\{ t_1\dotsm t_\gamma = 0 \}$ under the isomorphism from Lemma~\ref{lem:description-deformation-ring}.\ref{lem:description-deformation-ring-local}.
\end{cor}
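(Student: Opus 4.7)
The plan is to derive the corollary from the explicit local description of the universal deformation in Lemma~\ref{lem:description-deformation-ring}, by identifying which points of the formal neighborhood correspond to curves of compact type. After reordering the nodes so that the non-separating ones of $\cC$ are $x_1,\ldots,x_\gamma$ and the separating ones are $x_{\gamma+1},\ldots,x_\delta$, the cited lemma supplies an isomorphism $\cO^\wedge_{\overline{\cM_g}[m],y} \simeq k\llbracket t_1,\ldots,t_{3g-3}\rrbracket$ under which the persistence locus of the node $x_i$ is the coordinate hyperplane $\{t_i = 0\}$. The task then reduces to proving that the set-theoretic pullback of $\cD_m$ coincides with $V(t_1 \cdots t_\gamma)$.

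The main step will be graph-theoretic. For a closed point $t$ of the formal disc, write $T \subseteq \{1,\ldots,\delta\}$ for the set of indices with $t_i = 0$. Then the dual graph $\Gamma_t$ of the fiber $\fC_t$ is obtained from the dual graph $\Gamma$ of $\cC$ by contracting the edges indexed by $\{1,\ldots,\delta\} \smallsetminus T$ (with loops simply deleted), and $\fC_t$ is of compact type iff $\Gamma_t$ is a tree. My plan is to observe that, for any persisting edge $e \in T$, contraction and edge-deletion commute, so $\Gamma_t \smallsetminus \{e\}$ is the contraction of $\Gamma \smallsetminus \{e\}$; since contraction preserves connectivity, $e$ is a bridge of $\Gamma_t$ iff $e$ was a bridge of $\Gamma$. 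Because $\Gamma_t$ is automatically connected, it is a tree iff all of its edges are bridges iff $T \subseteq \{\gamma+1,\ldots,\delta\}$. Translating back, $\fC_t$ fails to be of compact type precisely when $t_1 \cdots t_\gamma = 0$.

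To finish, note that $t_1,\ldots,t_\gamma$ extend to a regular system of parameters of $k\llbracket t_1,\ldots,t_{3g-3}\rrbracket$, so the ideals $(t_i)$ are distinct prime ideals, their product $(t_1 \cdots t_\gamma)$ equals their intersection, and in particular it is already radical. Hence the reduced closed substack structure on $\cD_m$ pulls back to the divisor $\{t_1 \cdots t_\gamma = 0\}$, matching the local form required by Definition~\ref{defn:nc-divisor}. Performing this verification at every geometric point exhibits $\cD_m$ as a global normal crossings divisor on $\overline{\cM_g}[m]$. The only real subtlety will be the graph-theoretic step, where some care is needed to handle loop edges (which are automatically non-bridges and correspond to self-intersection nodes, hence to non-separating nodes as expected); once the bridge characterization of trees is in hand the argument is straightforward.
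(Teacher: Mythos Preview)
Your argument is correct and is precisely the natural elaboration of what the paper leaves implicit: the corollary is stated without proof, as an immediate consequence of Lemma~\ref{lem:description-deformation-ring}.\ref{lem:description-deformation-ring-local}, and your graph-theoretic step (bridges are preserved under contraction of other edges and deletion of loops, hence $\Gamma_t$ is a tree iff every persisting edge was already a bridge in $\Gamma$) is exactly the verification one needs to pass from ``the node $x_i$ persists over $\{t_i=0\}$'' to ``the non-compact-type locus is $\{t_1\dotsm t_\gamma=0\}$.'' The observation that $(t_1\dotsm t_\gamma)$ is already radical in $k\llbracket t_1,\dotsc,t_{3g-3}\rrbracket$ then pins down the reduced substack structure as required by Definition~\ref{defn:nc-divisor}.
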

We refer to Definition~\ref{defn:nc-divisor} for the notion of a normal crossings divisor on a smooth Deligne--Mumford stack.
\begin{lem}\label{lem:description-automorphisms}
  When $m \ge 3$, the automorphism group $\Aut^G(P \to \cC)$ of the $G$-torsor $P \to \cC$ is $G \times H_y$, where $H_y$ is a subgroup of $\Aut_C(\cC)$.
\end{lem}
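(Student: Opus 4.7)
The plan is to exhibit $\Aut^G(P \to \cC)$ as a central extension of a subgroup of $\Aut_C(\cC)$ by $G$ and then to split the extension canonically as a direct product. The forgetful homomorphism $\pi \colon \Aut^G(P \to \cC) \to \Aut(\cC)$, sending a pair $(\phi_\cC, \phi_P)$ to $\phi_\cC$, has kernel equal to the $G$-equivariant automorphisms of $P$ lying over the identity of $\cC$; since $P \to \cC$ is a $G$-torsor, this kernel is precisely $G$ acting by translations. As every element of $\Aut^G(P \to \cC)$ is by definition $G$-equivariant, this copy of $G$ lies in the center of $\Aut^G(P \to \cC)$, yielding a central extension
\[ 1 \to G \to \Aut^G(P \to \cC) \to H_y \to 1, \]
where $H_y \colonequals \pi\bigl(\Aut^G(P \to \cC)\bigr)$.

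Next, I would show $H_y \subseteq \Aut_C(\cC)$. Any $\phi_\cC \in H_y$ preserves the isomorphism class of the $G$-torsor $P$, equivalently the associated level structure $\Hh^1(\cC, \Z/m\Z) \xrightarrow{\sim} G$, so its induced action on $\Hh^1(\cC, \Z/m\Z)$ is trivial. Since $\cC$ is tame and $m$ is invertible on it, $\Hh^1(\cC, \Z/m\Z)$ agrees with $\Hh^1(C, \Z/m\Z)$. For $m \ge 3$, Serre's lemma applied to the generalized Jacobian $\operatorname{Pic}^0(C)$ of the stable curve $C$ implies that no nontrivial automorphism of $C$ can act trivially on $\Hh^1(C, \Z/m\Z)$; hence the induced action on $C$ is trivial, so indeed $H_y \subseteq \Aut_C(\cC)$.

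Finally, to split the extension as a direct product, I would use the \'etale-local structure from Definition~\ref{defn:twisted-curve}: at each non-separating node, $\cC$ has the form $[\Spec R / \mu_m]$ with $\mu_m$ acting by inertia, and $\Aut_C(\cC)$ is generated by these $\gamma$ inertial $\mu_m$-symmetries. For each $\phi_\cC \in H_y$ the corresponding inertial action on $\cC$ lifts uniquely to a $G$-equivariant automorphism of $P$, because the free $G$-translations on $P$ commute with the inertia on the \'etale cover. Gluing these canonical local lifts produces a group-homomorphism section $s \colon H_y \to \Aut^G(P \to \cC)$ whose image centralizes $G$, yielding the decomposition $\Aut^G(P \to \cC) \simeq G \times H_y$. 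The main technical obstacle lies in this last step: making the global gluing precise and checking that $s$ is a homomorphism requires tracking how $H_y$ is cut out inside $\mu_m^\gamma$ by the vanishing of the monodromy characters through which the inertia acts on $P$, in the spirit of the analysis in \cite{MR2007376}.
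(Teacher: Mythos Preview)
Your first two steps---setting up the central extension and invoking Serre's lemma to land $H_y$ inside $\Aut_C(\cC)$---are sound and parallel the paper's argument. The gap is in your third step. The claim that an inertial automorphism of $\cC$ ``lifts uniquely to a $G$-equivariant automorphism of $P$'' is false: any two $G$-equivariant lifts of a fixed $\phi_\cC$ differ by an element of $G$, so there are $\lvert G\rvert$ such lifts, not one. Commutation of the $G$-translations with inertia does not single out a preferred one. Without a canonical choice you cannot assemble a section, and a central extension of a finite group by $G$ need not split on abstract grounds alone, so your hedging at the end is warranted---this step really is the whole content of the lemma.

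The paper sidesteps this by passing to the larger group $\Aut_C(P \to \cC)$ of \emph{all} automorphisms of $P \to \cC$ (not just $G$-equivariant ones) acting trivially on $C$. By \cite[Lem.~7.3.3]{MR2007376} the sequence
\[ 1 \to \Aut_\cC(P) \to \Aut_C(P \to \cC) \to \Aut_C(\cC) \to 1 \]
is split exact; the construction of that splitting is done once in the reference rather than reinvented. Then $\Aut^G(P \to \cC)$, already known to lie in $\Aut_C(P \to \cC)$ by Serre's lemma, is simply the centralizer of $G \simeq \Aut_\cC(P)$ inside $\Aut_C(P \to \cC) \simeq G \rtimes \Aut_C(\cC)$. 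Since $G$ is abelian, this centralizer is $G \times \ker\bigl(\Aut_C(\cC) \to \Aut(G)\bigr)$, which gives $H_y$ and the product decomposition in one stroke---no local-to-global gluing required.
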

\begin{proof}
  As $m \geq 3$, a lemma of Serre shows that $\Aut^G(P \to \cC)$ is contained in the group $\Aut_C(P \to \cC)$ of automorphism of $P \to \cC$ which act trivially on the underlying coarse space $C$ \cite[Lem.~7.2.1]{MR2007376}.
  By \cite[Lem.~7.3.3]{MR2007376}, the sequence
  \[ 1 \to \Aut_\cC(P) \to \Aut_C(P \to \cC) \to \Aut_C(\cC) \to 1 \]
  is split exact.
  Since $\Aut^G(P \to \cC)$ is the centralizer of $\Aut_\cC(P) \simeq G$ in $\Aut_C(P \to \cC)$ and $G$ is abelian, the statement follows with $H_y \colonequals \ker(\Aut_C(\cC) \to \Aut(G))$.
\end{proof}
\begin{cor}
  The completion of $\overline{\cM_g}[m]$ at $y$ for $m \ge 3$ is given by
  \[ \overline{\cM_g}[m]^\wedge_y \colonequals \overline{\cM_g}[m] \times_{\overline{M_g}[m]} \Spec \cO^\wedge_{\overline{M_g}[m],y} \simeq \left[ \Spec k\llbracket t_1,\dotsc,t_{3g-3}\rrbracket / H_y \right], \]
  with the open substacks $\cM^{\cc}_g[m] \times_{\overline{\cM_g}[m]} \overline{\cM_g}[m]^\wedge_y$ and $\cM_g[m] \times_{\overline{\cM_g}[m]} \overline{\cM_g}[m]^\wedge_y$ corresponding to the loci $\left[ (\Spec k\llbracket t_1,\dotsc,t_{3g-3}\rrbracket_{t_1\dotsm t_\gamma}) / H_y \right]$ and $\left[ (\Spec k\llbracket t_1,\dotsc,t_{3g-3}\rrbracket_{t_1\dotsm t_\delta}) / H_y \right]$, respectively.
\end{cor}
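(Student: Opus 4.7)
The plan is to combine the automorphism computation of Lemma~\ref{lem:description-automorphisms} with the standard local structure theorem for smooth, tame Deligne--Mumford stacks, using Lemma~\ref{lem:description-deformation-ring} to identify the resulting regular local ring and Corollary~\ref{cor:non-ct-nc} to read off the open loci. The first step is to pass from the automorphism group $\Aut^G(P \to \cC)$ of the $G$-torsor $P \to \cC$ to the automorphism group of $y$ in the stack $\overline{\cM_g}[m]$ itself. Since $\overline{\cM_g}[m]$ is the rigidification along $G$ of the stack of connected $G$-torsors over twisted curves with stable coarse space (\cite[Thm.~6.2.4]{MR2007376}), the decomposition $\Aut^G(P \to \cC) = G \times H_y$ from Lemma~\ref{lem:description-automorphisms} yields $\Aut_{\overline{\cM_g}[m]}(y) = H_y$.

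Next, because $m$ is invertible in $k$, the stack $\overline{\cM_g}[m]$ is a smooth, tame Deligne--Mumford stack, so by the local structure theorem for such stacks (compare the proof of \cite[Thm.~11.3.1]{MR3495343}) there is an \'etale neighborhood of $y$ of the form $[U/H_y]$, where $U$ is a scheme carrying an $H_y$-action whose quotient $U/H_y$ is \'etale over $\overline{M_g}[m]$ around the image of $y$. Base changing along the flat morphism $\Spec \cO^\wedge_{\overline{M_g}[m],y} \to \overline{M_g}[m]$ and using that the formation of coarse moduli spaces of tame DM stacks commutes with flat base change then gives
\[ \overline{\cM_g}[m]^\wedge_y \simeq \bigl[ \Spec \cO^\wedge_{\overline{\cM_g}[m],y}/H_y \bigr]. \]
Lemma~\ref{lem:description-deformation-ring}.\ref{lem:description-deformation-ring-local} identifies the completed local ring on the right with $k\llbracket t_1,\dotsc,t_{3g-3}\rrbracket$.

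For the description of the open substacks, I would use Corollary~\ref{cor:non-ct-nc}, which shows that the boundary $\overline{\cM_g}[m] \smallsetminus \cM^{\cc}_g[m]$ pulls back to $\{t_1 \dotsm t_\gamma = 0\}$; hence the restriction of $\cM^{\cc}_g[m]$ is $[\Spec k\llbracket t_1,\dotsc,t_{3g-3}\rrbracket_{t_1\dotsm t_\gamma}/H_y]$. Similarly, Lemma~\ref{lem:description-deformation-ring}.\ref{lem:description-deformation-ring-local} says that the node $x_i$ persists in the universal curve exactly over $\{t_i = 0\}$, so the smooth locus of the universal curve (that is, the preimage of $\cM_g[m]$) is the complement of $\{t_1 \dotsm t_\delta = 0\}$, giving $[\Spec k\llbracket t_1,\dotsc,t_{3g-3}\rrbracket_{t_1\dotsm t_\delta}/H_y]$. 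The only genuine technical point is invoking the local structure theorem and its compatibility with the coarse space; once that is granted, the remaining identifications are direct consequences of results already established in this section.
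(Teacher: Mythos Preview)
Your proposal is correct and follows essentially the same route as the paper: both invoke the local structure theorem for tame Deligne--Mumford stacks (via the proof of \cite[Thm.~11.3.1]{MR3495343}) to identify $\overline{\cM_g}[m]^\wedge_y$ with $\bigl[\Spec \cO^\wedge_{\overline{\cM_g}[m],y}/\Aut(y)\bigr]$, use the rigidification along $G$ together with Lemma~\ref{lem:description-automorphisms} to compute $\Aut(y) \simeq H_y$, and then apply Lemma~\ref{lem:description-deformation-ring} (and its Corollary~\ref{cor:non-ct-nc}) to read off the coordinates and the open loci. Your write-up is simply more explicit about the open substacks than the paper's terse ``Now use Lemma~\ref{lem:description-deformation-ring}.''
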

\begin{proof}
  As in Lemma~\ref{lem:global-ramification}, the proof of \cite[Thm.~11.3.1]{MR3495343} shows $\overline{\cM_g}[m]^\wedge_y \simeq \Bigl[ \Spec \cO^\wedge_{\overline{\cM_g}[m],y} / \Aut(y) \Bigr]$.
  Since $\overline{\cM_g}[m]$ is rigidified along $G$, we have $\Aut(y) \simeq H_y$ by Lemma~\ref{lem:description-automorphisms}.
  Now use Lemma~\ref{lem:description-deformation-ring}.
\end{proof}
We will need a more concrete description of $\Aut_C(\cC)$ in Example~\ref{exmp:Mumford-compactification}.
\begin{lem}[{\cite[\S~7.1]{MR2007376}}]\label{lem:description-AutC}
  There is an isomorphism
  \[ \Aut_C(\cC) \simeq \prod^\gamma_{i=1} \mu_m; \]
  if $U \colonequals \Spec(k[z,w]/(zw))^{\sh}$ is an \'etale atlas for the strict henselization $\cC^{\sh}_{x_i}$ at a non-separating node $x_i$ as in Definition~\ref{defn:twisted-curve}, the automorphisms from the $i$-th factor act trivially on $\cC \smallsetminus \{x_i\}$, and on $\cC^{\sh}_{x_i}$ as
\[ \Aut_{C^{\sh}_{x_i}}\cC^{\sh}_{x_i} \simeq (\Aut_{U/\mu_m} U) / (\Aut_{\left[ U/\mu_m \right]} U) \simeq \mu^2_m / \mu_m \simeq \mu_m. \]
\end{lem}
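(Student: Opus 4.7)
The plan is to reduce to the strictly local picture at each non-separating node and then assemble the contributions. First I would exploit that, by Definition~\ref{defn:twisted-curve}(iii), the natural morphism $\cC \to C$ is an isomorphism over the smooth locus of $C$; hence any $\sigma \in \Aut_C(\cC)$ restricts to the identity on this dense open substack. Consequently $\sigma$ is determined by its effect on the strict henselizations $\cC^{\sh}_{x_j}$ at the nodes $x_1,\dotsc,x_\delta$, and conversely any compatible collection of automorphisms of the $\cC^{\sh}_{x_j}$ which equals the identity on the punctured neighborhood glues to a global automorphism of $\cC$ over $C$. Since a pre-level-$m$ curve has trivial stabilizer at separating nodes, $\cC^{\sh}_{x_j} \simeq C^{\sh}_{x_j}$ for $j > \gamma$ and the only admissible automorphism there is the identity; thus $\Aut_C(\cC) \simeq \prod^\gamma_{i=1} \Aut_{C^{\sh}_{x_i}}\cC^{\sh}_{x_i}$, and each factor acts trivially on $\cC \smallsetminus \{x_i\}$.

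Next I would carry out the local computation at a non-separating node $x_i$. Using the chart $U = \Spec(k[z,w]/(zw))^{\sh}$ from Definition~\ref{defn:twisted-curve}, the coarse space at $x_i$ is $\Spec(k[u,v]/(uv))^{\sh}$ with $u = z^m$ and $v = w^m$, and the stack itself is $[U/\mu_m]$. The formula
\[ \Aut_{C^{\sh}_{x_i}}\cC^{\sh}_{x_i} \simeq (\Aut_{U/\mu_m} U) / (\Aut_{\left[ U/\mu_m \right]} U) \]
follows from descent along the étale atlas $U \to [U/\mu_m]$: the numerator is the group of $\mu_m$-equivariant automorphisms of $U$ covering the identity on the coarse space, and the denominator is the inertia group of the atlas, namely the image of $\mu_m$ itself. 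To evaluate the numerator, I would observe that an automorphism of $U$ fixing $u = z^m$ and $v = w^m$ must preserve the two branches $\{z = 0\}$ and $\{w = 0\}$ individually (swapping them would interchange $u$ and $v$) and therefore is of the form $z \mapsto \zeta_1 z$, $w \mapsto \zeta_2 w$ with $\zeta_1,\zeta_2 \in \mu_m$; hence $\Aut_{U/\mu_m} U \simeq \mu_m^2$. The denominator is the diagonal copy of $\mu_m$ inside $\mu_m^2$ arising from the action $(z,w) \mapsto (\zeta z, \zeta^{-1} w)$, so the quotient is isomorphic to $\mu_m$.

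Finally I would conclude by combining the two displays: each local group $\mu_m$ extends by the identity across the rest of $\cC$ to give a global automorphism over $C$, and these extensions commute with one another since they are supported at distinct nodes. This produces the desired isomorphism $\Aut_C(\cC) \simeq \prod^\gamma_{i=1}\mu_m$ together with the described local action. The main point requiring care is the local calculation $\mu_m^2/\mu_m \simeq \mu_m$, in particular the verification that branch-swapping is not allowed; the rest of the argument is formal bookkeeping made possible by the fact that $\cC \to C$ is an isomorphism away from the nodes.
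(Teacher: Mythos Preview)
The paper does not supply its own proof of this lemma; it simply records the statement and cites \cite[\S~7.1]{MR2007376}. Your sketch is a faithful reconstruction of the argument one finds there: localize at the nodes using that $\cC \to C$ is an isomorphism over the smooth locus, discard the separating nodes because the stabilizer there is trivial, and compute $\Aut_{C^{\sh}_{x_i}}\cC^{\sh}_{x_i}$ from the explicit $\mu_m$-quotient presentation. The identification $\Aut_{U/\mu_m} U \simeq \mu_m^2$ via $(z,w)\mapsto(\zeta_1 z,\zeta_2 w)$ is correct, and so is the conclusion $\mu_m^2/\mu_m \simeq \mu_m$; one small wording slip is that the image of $\mu_m$ in $\mu_m^2$ under $\zeta\mapsto(\zeta,\zeta^{-1})$ is the \emph{anti}-diagonal rather than the diagonal, but of course the quotient is the same. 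Nothing further needs to be said.
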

\begin{exmp}\label{exmp:description-Hy}
  Lemma~\ref{lem:description-AutC} leads to a hands-on description of $H_y$ when $k=\C$.
  Let $\cC_\Delta \to \Delta$ be a deformation of $\cC$ over a small polydisc $\Delta$ with smooth general fiber $C_\eta$.
  For $1 \leq i \leq \gamma$, let $c_i$ be the vanishing cycle of $C_\eta$ corresponding to the node $x_i$ of $C$.
  A full level-$m$ structure on $\cC$ induces an isomorphism $G \simeq \Hh_1(C_\eta,\Z/m\Z)$.
  With this identification, we can pick $(\zeta_1,\dotsc,\zeta_\gamma) \in \prod^\gamma_{i=1}\mu_m \simeq \Aut_C(\cC)$ such that $\zeta_i$ acts on $G$ via the Dehn twist
  \[ \Hh_1(C_\eta,\Z/m\Z) \to \Hh_1(C_\eta,\Z/m\Z), \quad \alpha \mapsto \alpha + (\alpha \cdot c_i)c_i \]
  \cite[Lem.~7.3.3]{MR2007376}.
  Since the intersection pairing on $\Hh_1(C_\eta,\Z)$ is unimodular and the vanishing cycles form part of a basis, $H_y \simeq \ker\bigl(\bigoplus_i \Hh_1(c_i,\Z/m\Z) \to \Hh_1(C_\eta,\Z/m\Z)\bigr)$.

  From the long exact sequence in homology for the pair $\bigl(C,\bigcup_i c_i\bigr)$, we see further that $H_y \simeq \im \bigl(\Hh_2(C_\eta,\bigcup_i c_i;\Z/m\Z) \to \bigoplus_i \Hh_1(c_i,\Z/m\Z)\bigr)$.
  Let $\nu$ be the number of irreducible components of $C$.
  Then $C_\eta \smallsetminus \bigl(\bigcup_i c_i \bigr)$ has $\nu - (\delta-\gamma)$ connected components $C_1,\dotsc,C_{\nu-\delta+\gamma}$.
  In particular, $\Hh_2(C_\eta,\bigcup_i c_i;\Z/m\Z) \simeq \bigoplus_j \Z/m\Z \cdot [C_j]$, where $[C_j]$ denotes the fundamental class of the closure of $C_j$ in $C_\eta$.

  Let $\Gamma$ be the dual graph of $C$.
  Let $\Gamma'$ be the graph obtained from $\Gamma$ by contracting all bridges.
  Its vertices correspond to the connected components $C_j$ and its edges to the non-separating nodes $x_i$.
  After fixing compatible orientations for the $c_i$ and $\Gamma'$, we can identify the complex $\Hh_2(C_\eta,\bigcup_i c_i;\Z/m\Z) \to \bigoplus_i \Hh_1(c_i,\Z/m\Z)$ with the cellular cochain complex $\Cc^0(\Gamma',\Z/m\Z) \to \Cc^1(\Gamma',\Z/m\Z)$.
  In particular, $H_y$ is a free $\Z/m\Z$-module of rank $\gamma - b_1(\Gamma') = \gamma - b_1(\Gamma) = \gamma-\delta+\nu-1$, as can also be seen directly from the long exact homology sequence.
\end{exmp}

\subsection{Towers of moduli spaces}\label{subsect:towers}

Assume $\charac k \neq p$.
For each $n \in \Z_{\geq 0}$, set $G_n \colonequals (\Z/p^n\Z)^{2g}$.
We have a system of natural flat, proper, quasi-finite maps
\[ \dotsb \to \overline{\cM_g}[p^{n+1}] \to \overline{\cM_g}[p^n] \to \dotsb, \]
which are given on $S$-valued points by
\[ (P_{n+1} \to \cC_{n+1} = \left[ P_{n+1}/G_{n+1}\right] \to S) \mapsto (P_n \colonequals P_{n+1}/(p^n\Z/p^{n+1}\Z)^{2g} \to \cC_n \colonequals \left[ P_n/G_n \right] \to S). \]
The next lemma, which uses the notion of $p$-ramified morphisms from Definition~\ref{defn:m-ramified}, will allow us to apply Theorem~\ref{thm:tower-cohomology} to the moduli spaces of curves from above.
\begin{lem}[{\cite[Thm.~5.1.5]{MR2920693} or \cite[Rmk.~1.11]{MR2309994}}]\label{lem:Mg-p-ramified}
  The maps $\overline{\cM_g}[p^{n+1}] \to \overline{\cM_g}[p^n]$ are $p$-ramified over the normal crossings divisor $\cD_{p^n}$ from Corollary~\ref{cor:non-ct-nc}.
\end{lem}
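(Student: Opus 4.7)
The plan is to reduce the claim to an explicit local calculation at each non-separating node and then globalize using the decomposition of the completed local rings from Lemma~\ref{lem:description-deformation-ring}. Fix a geometric point $y' \in \overline{\cM_g}[p^{n+1}](k)$ with image $y \in \overline{\cM_g}[p^n](k)$; these correspond to pre-level-$p^{n+1}$ and pre-level-$p^n$ structures on twisted curves $\cC_{n+1}$ and $\cC_n$ sharing the same stable coarse moduli $C$, with non-separating nodes $x_1,\ldots,x_\gamma$.

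The heart of the argument is the computation at a single non-separating node $x_i$. \'Etale-locally at $x_i$, Definition~\ref{defn:twisted-curve} gives isomorphisms $\cC_n^{\sh} \simeq [\Spec k[z,w]/(zw) \,/\, \mu_{p^n}]$ and $\cC_{n+1}^{\sh} \simeq [\Spec k[z',w']/(z'w') \,/\, \mu_{p^{n+1}}]$, and the universal smoothings of these twisted nodes are obtained by adjoining smoothing parameters $\tau_i$ (resp.\ $\tau'_i$) via $zw = \tau_i$ (resp.\ $z'w' = \tau'_i$) on which the stabilizers act trivially. The forgetful map shrinks the stabilizer at $x_i$ from $\mu_{p^{n+1}}$ to $\mu_{p^n}$ via the surjective $p$-th power homomorphism $\zeta \mapsto \zeta^p$, with compatible map on the underlying coordinate rings given by $z \mapsto (z')^p$, $w \mapsto (w')^p$. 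The smoothing relation $zw = \tau_i$ then forces $\tau_i \mapsto (\tau'_i)^p$, which is exactly the $p$-ramification behavior demanded by Definition~\ref{defn:m-ramified}.

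To globalize, I would invoke the local-to-global Ext sequence~(\ref{eqn:local-global-principle}) from the proof of Lemma~\ref{lem:description-deformation-ring}: it makes the natural map $k\llbracket \tau_1,\ldots,\tau_\delta \rrbracket \to \cO^\wedge_{\overline{\cM_g}[p^n],y}$ assembled from the $\delta$ node deformation rings formally smooth, and likewise the analogous map at $y'$. Extending the node parameters to full coordinate systems on both sides produces identifications of both completed local rings with $k\llbracket t_1,\ldots,t_{3g-3}\rrbracket$ under which $\pi_n^*(t_i) = \nu_i t^p_i$ for $1 \le i \le \gamma$, the units $\nu_i$ absorbing the flexibility in the formally smooth extensions. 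Combined with Corollary~\ref{cor:non-ct-nc}, which identifies $\cD_{p^n}$ with $\{t_1 \cdots t_\gamma = 0\}$, this is the required $p$-ramification.

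The main obstacle is pinning down the local formula for the forgetful map between universal node deformations — namely, verifying that the intrinsic moduli map $\cC_{n+1}^{\sh} \to \cC_n^{\sh}$ (coming from the forgetful morphism on $\overline{\cM_g}[-]$) really is the explicit quotient-and-$p$-th-power map sketched above, rather than some other morphism between these local models. Once this node-level computation is pinned down, the remaining globalization via formal smoothness is essentially bookkeeping.
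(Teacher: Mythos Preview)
Your proposal is correct and follows essentially the same strategy as the paper's own explanation: reduce to the universal formal deformation of each non-separating node via Lemma~\ref{lem:description-deformation-ring} and the local-to-global decomposition, then compute that the forgetful map on node smoothing parameters is $\tau_i \mapsto (\tau'_i)^p$. The paper's discussion is terser (it simply points to the universal formal deformations of the nodes and cites \cite[Thm.~5.1.5]{MR2920693} and \cite[Rmk.~1.11]{MR2309994} for the local computation), but the architecture is the same, and you have correctly isolated the one genuinely nontrivial point --- identifying the intrinsic forgetful map $\cC^{\sh}_{n+1} \to \cC^{\sh}_n$ with the explicit $p$-th-power model --- as the place where the cited references do the work.
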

The proof relies on the description of the completed local rings from Lemma~\ref{lem:description-deformation-ring}.
The ``local contributions'' of the nodes to a map $\cO^\wedge_{\overline{\cM_g}[p^n],y_n} \to \cO^\wedge_{\overline{\cM_g}[p^{n+1}],y_{n+1}}$ can be computed using the universal formal deformations of the nodes described in the proof of Lemma~\ref{lem:description-deformation-ring}.\ref{lem:description-deformation-ring-local};
this yields the desired ramification at every node with non-trivial stabilizer.

\section{Vanishing for moduli spaces of curves}\label{sect:moduli-curves-vanishing}

In this section, we discuss how to apply the vanshing theorems from \S~\ref{sect:perverse-vanishing}, \S~\ref{sect:perfectoid-Artin}, and \S~\ref{sect:boundary-vanishing} to the moduli spaces of curves reviewed in \S~\ref{sect:moduli-curves}.
We retain the conventions and notation from \S~\ref{sect:moduli-curves}.

\subsection{The Torelli morphism}\label{subsect:Torelli}

The Torelli morphism is the functor
\[ t_g \colon \cM^{\cc}_g \to \cA_g \]
which sends a curve of compact type to its (principally polarized) Jacobian.
A detailed account of the construction of $t_g$ is, for example, given in \cite{landesman2019properness}.
For $m \in \Z_{>0}$ invertible in $k$, let $\cA_g[m]$ be the moduli space of principally polarized abelian varieties of dimension $g$ over $k$ with full level-$m$ structure.
Since full level-$m$ structures on curves of compact type correspond to full level-$m$ structures on their Jacobians, the stack $\cM^{\cc}_g[m]$ from \S~\ref{sect:moduli-curves} fits into a pullback square
  \[ \begin{tikzcd}
      \cM^{\cc}_g[m] \arrow[r,"t^m_g"] \arrow[d,"\pi_m"] & \cA_g[m] \arrow[d,"\rho_m"] \\
      \cM^{\cc}_g \arrow[r,"t_g"] & \cA_g
  \end{tikzcd} \]
and we obtain a Torelli map $t^m_g \colon \cM^{\cc}_g[m] \to \cA_g[m]$ at level $m$.
The scheme-theoretic image of $t^m_g$ (\cite[\href{https://stacks.math.columbia.edu/tag/0CMH}{\S~0CMH}]{stacks-project}) is called the Torelli locus $\cT_g[m] \subseteq \cA_g[m]$.
Set $\cT_g \colonequals \cT_g[0]$ and $\cT_1 \colonequals \cA_1$.

Since $t^m_g$ is only generically finite, we cannot expect that $\colim_n \Hh^i_{\et}(\cM^{\cc}_g[p^n],\F_p) = 0$ for all $i > \dim \cM^{\cc}_g = 3g-3$, even though $\overline{\cA_g}[p^n]$ is perfectoid at infinite level;
cf.\ Example~\ref{exmp:Ag-tor} and Example~\ref{exmp:no-vanishing-generically-finite}.
In this subsection, we instead apply the results from \S~\ref{sect:perverse-vanishing} to $t^m_g$.
First, we recall two well-known statements.
\begin{lem}\label{lem:Torelli-proper}
  The morphism $t^m_g$ is representable and proper.
\end{lem}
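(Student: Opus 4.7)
The plan is to verify representability and then apply the valuative criterion of properness. For representability, it suffices to check that for every geometric point, the induced homomorphism on automorphism groups $\Aut(C,\lambda) \to \Aut(J(C),\Theta,\lambda_J)$ is injective, where $\lambda$ is a full level-$m$ structure on the curve of compact type $C$ and $\lambda_J$ is the induced level structure on its Jacobian. This is immediate from the strong Torelli theorem: any automorphism of a curve of compact type which acts as the identity on its principally polarized Jacobian must itself be the identity (reducing to smooth components and combining with the rigidity of the dual tree). For $m \ge 3$, both sides are in fact algebraic spaces and representability is automatic.

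For properness, I would fix a DVR $R$ with fraction field $K$ and a commutative square consisting of a morphism $\Spec R \to \cA_g[m]$ (given by a principally polarized abelian $R$-scheme $(A,\theta)$ with level structure) together with a lift $\Spec K \to \cM^{\cc}_g[m]$, i.e., a curve of compact type $C_K/K$ with level-$m$ structure whose Jacobian is $(A_K, \theta_K)$. After replacing $R$ by a finite extension (permitted by the valuative criterion for DM stacks), stable reduction yields a stable model $\bar{C}/R$ extending $C_K$. To see that $\bar{C}$ is actually of compact type, I would identify the identity component of the N\'eron model of $A_K$ in two ways: on the one hand it equals $A$ itself, since $A/R$ is proper and hence coincides with its own N\'eron model; on the other hand, by well-known results on Picard schemes of nodal curves, it equals $\Pic^0(\bar{C}/R)$, whose special fiber is a semi-abelian extension of $\prod_i J(\widetilde{C}_{s,i})$ by a torus of rank equal to $b_1$ of the dual graph of $\bar{C}_s$. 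Since $A_s$ is an abelian variety, this toric part must vanish, so the dual graph is a tree, i.e., $\bar{C}_s$ is of compact type. The level structure on $C_K$ then extends canonically via the local system $R^1 f_*(\Z/m\Z)$ on the $R$-family $\bar{C}/R$, matching the given level structure on $A$ by Torelli.

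Separatedness would follow from Torelli again: two such extensions $\bar{C}, \bar{C}'/R$ share the polarized Jacobian $A/R$, so the strong Torelli theorem for curves of compact type supplies a unique isomorphism $\bar{C}_s \simeq \bar{C}'_s$ compatible with the given generic isomorphism, hence an isomorphism over $R$. The main technical input — and the step I expect to require the most care — is the N\'eron-model analysis: one must know both that $A$ is its own N\'eron model because it is proper, and the precise structure of $\Pic^0(\bar{C}_s)$ as a semi-abelian variety whose toric rank detects non-separating nodes. Granting these standard facts from the theory of N\'eron models and semi-abelian varieties, the remainder of the argument is a direct verification of the valuative criterion.
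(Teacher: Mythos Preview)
Your approach is essentially the same as the paper's: representability via injectivity of $\Aut(C)\to\Aut(J(C))$ on geometric points, and properness via the valuative criterion combined with stable reduction and the identification of the N\'eron model of $A_K$ with $\Pic^0$ of the stable model. The paper streamlines slightly by reducing at the outset to $m=0$ (both properties are stable under base change along $\cA_g[m]\to\cA_g$), and it cites Raynaud's theorem for the identification $J(\fC_\kappa)\simeq\fA_\kappa$ rather than arguing directly via the toric rank of $\Pic^0(\bar{C}_s)$; these are the same content.

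One point deserves correction. Your separatedness argument appeals to a ``strong Torelli theorem for curves of compact type'' to produce a unique isomorphism $\bar{C}_s\simeq\bar{C}'_s$, but no such uniqueness holds: two non-isomorphic curves of compact type can share the same principally polarized Jacobian (the Jacobian forgets the attachment points and rational components; cf.\ Lemma~\ref{lem:ct-Torelli} and Lemma~\ref{lem:fiber-dimension}). Fortunately you do not need Torelli here at all. Separatedness of $t^m_g$ is automatic once you observe that $\cM^{\cc}_g[m]$ is separated (as an open substack of the separated stack $\overline{\cM_g}[m]$) and that $\cA_g[m]$ has separated diagonal; this is exactly how the paper handles it, reducing properness to the \emph{existence} part of the valuative criterion only.
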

\begin{proof}
  Representable and proper morphisms are stable under base change, so it suffices to show both properties for $m = 0$.
  To prove that $t_g$ is representable, we only have to see that for every curve $C$ of compact type over an algebraically closed field, the induced group homomorphism $\Aut(C) \to \Aut\bigl(J(C)\bigr)$ is injective;
  cf.\ e.g.\ \cite[Lem.~4.4.3]{MR1862797}.
  This is \cite[Thm.~1.13]{MR0262240}.

  Since $\cM^{\cc}_g$ is of finite type and separated and $\cA_g$ is locally noetherian with separated diagonal, properness follows from the existence part of the valuative criterion for all discrete valuation rings $V$ with fraction field $K$.
  Let $\fA \in \cA_g(V)$ (suppressing principal polarizations from the notation) and $C \in \cM^{\cc}_g(K)$ such that $\fA_K \simeq J(C)$.
  The stable reduction theorem for curves produces an extension $K \subseteq K'$ and a stable curve $\fC$ over a valuation ring $V' \subset K'$ dominating $V$ with residue field $\kappa$ whose generic fiber is isomorphic to $C_{K'}$.
  By Weil's extension theorem for rational maps into group schemes, $\fA_{V'}$ is the N\'eron model of $\fA_{K'}$.
  In particular, a theorem of Raynaud \cite[Thm.~8.2.1]{MR282993} (cf.\ also \cite[Thm.~2.5]{MR0262240}) shows that $J(\fC_\kappa) \simeq \fA_\kappa$, so $\fC_\kappa$ is of compact type and $\fC \in \cM^{\cc}_g(V')$ is the desired lift of $\fA_{V'}$.
\end{proof}
\begin{lem}[Torelli for compact type curves]\label{lem:ct-Torelli}
  Let $C$ and $D$ be two curves of compact type of genus $g$ over $k$.
  Let $C_1,\dotsc,C_\delta$ and $D_1,\dotsc,D_\varepsilon$ be their irreducible components of positive genus.
  If $t_g(C) \simeq t_g(D)$, then $\delta = \varepsilon$ and $C_i \simeq D_{\sigma(i)}$ for some permutation $\sigma \in S_\delta$.
\end{lem}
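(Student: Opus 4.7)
The plan is to reduce the statement to the classical Torelli theorem for smooth curves by first showing that the principally polarized Jacobian of a curve of compact type canonically decomposes as a product of the principally polarized Jacobians of its components, and then invoking the uniqueness of the decomposition of a principally polarized abelian variety into indecomposable factors.

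First, I would recall that since $C$ is of compact type, its dual graph is a tree, so the partial normalization along the nodes produces a disjoint union of smooth curves which are precisely the irreducible components of $C$. Since the nodes are separating, the associated exact sequence for the Picard schemes gives $J(C) \simeq \prod_{i} J(C_i)$ as abelian varieties, where the right-hand side only sees the components of positive genus (components of genus $0$ contribute trivially). Moreover, under this isomorphism the canonical principal polarization on $J(C)$ corresponds to the product of the canonical principal polarizations on the $J(C_i)$, since the theta divisor on the product is the sum $\sum_j \Theta_j \times \prod_{i\neq j} J(C_i)$; the analogous decomposition holds for $D$.

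Next I would invoke the classical fact that any principally polarized abelian variety $(A,\lambda)$ admits a decomposition $(A,\lambda) \simeq \prod_k (A_k,\lambda_k)$ into indecomposable principally polarized factors which is unique up to permutation of the factors (this is for instance a consequence of the existence of a ``kernel idempotent'' decomposition in the endomorphism algebra together with the Rosati involution). To apply this, I need that each $(J(C_i),\Theta_{C_i})$ is itself indecomposable as a principally polarized abelian variety. For $C_i$ smooth of positive genus this follows from the irreducibility of the theta divisor of a smooth curve (for $g(C_i)=1$ the Jacobian is an elliptic curve, which is trivially indecomposable). Consequently, applying the uniqueness statement to the two decompositions of $t_g(C) \simeq t_g(D)$ yields $\delta = \varepsilon$ and a permutation $\sigma \in S_\delta$ together with isomorphisms of principally polarized abelian varieties $\bigl(J(C_i),\Theta_{C_i}\bigr) \simeq \bigl(J(D_{\sigma(i)}),\Theta_{D_{\sigma(i)}}\bigr)$.

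Finally, each such isomorphism produces the desired isomorphism $C_i \simeq D_{\sigma(i)}$: in genus $1$ the Jacobian is the curve itself (after choosing the origin as base point), and in genus $\ge 2$ this is the classical Torelli theorem. The main subtlety in the argument is the rigidity of the product decomposition in the presence of possibly isomorphic factors, which is exactly what the uniqueness-up-to-permutation of the decomposition into indecomposable principally polarized factors provides; everything else is either a direct computation with the theta divisor or a citation of classical results.
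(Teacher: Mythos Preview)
Your proof is correct and follows essentially the same route as the paper's: decompose $t_g(C)$ and $t_g(D)$ as products of the principally polarized Jacobians of the positive-genus components, invoke the unique decomposition of a principally polarized abelian variety into indecomposable factors (using irreducibility of the theta divisor of a smooth curve to see indecomposability), and finish with the classical Torelli theorem. The paper's proof is terser but structurally identical; your additional justification of the product decomposition and the genus~$1$ case is fine and does not diverge from the intended argument.
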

\begin{proof}
  We have $\prod^\delta_{i=1} J(C_i) \simeq J(C) \simeq J(D) \simeq \prod^\varepsilon_{j=1} J(D_j)$ as principally polarized abelian varieties.
  Since a principally polarized abelian variety over an algebraically closed field uniquely decomposes into an unordered product of indecomposable principally polarized abelian varieties (\cite[Lem.~3.20]{MR0302652}, \cite[Cor.~2]{MR1411055}) and the polarization for each $C_i$ and $D_j$ is induced by its irreducible theta divisor, we must have $\delta = \varepsilon$ and $J(C_i) \simeq J(D_{\sigma(i)})$ for some $\sigma \in S_\delta$.
  The statement now follows from the usual Torelli theorem for smooth, projective curves.
\end{proof}
In other words, $t_g(C)$ determines the irreducible components of $C$ of positive genus, but does not depend on the position of the nodes or the number of rational irreducible components.
A dimension analysis (keeping in mind the dimensions of the automorphism groups of curves of genus $0$ and $1$) then shows the following statement.
\begin{lem}[{\cite[Prop.~5.2.1]{MR2825165}}]\label{lem:fiber-dimension}
  Let $C$ be a curve of compact type of genus $g$ over $k$.
  Let $\delta$ and $\delta_1$ be the number of irreducible components of $C$ of positive genus and genus $1$, respectively.
  Then
  \[ \dim \Bigl(\bigl(\cM^{\cc}_g\bigr)_{t_g(C)}\Bigr) = 2\delta - \delta_1 - 2. \]
\end{lem}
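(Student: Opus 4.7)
The plan is to combine the Torelli-type Lemma~\ref{lem:ct-Torelli} with a stratification by combinatorial type. By the lemma, every $D$ in the fiber $(\cM^{\cc}_g)_{t_g(C)}$ has positive-genus components isomorphic to $C_1,\dotsc,C_\delta$ (up to reordering), while its remaining components must be rational. Hence the fiber decomposes as a union, indexed by decorated dual trees $T$, of the moduli of curves whose topological type is $T$ and whose positive-genus vertices are labeled by $\{C_1,\dotsc,C_\delta\}$; the additional data on each stratum is the position of the nodes on each component.

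To compute the dimension of such a stratum, I would add up the dimensions of the moduli of node positions on each component: a vertex of genus $\ge 2$ with valence $v_i$ contributes $v_i$ (automorphisms of $C_i$ are finite); a genus-$1$ vertex contributes $v_i-1$ (translations on the elliptic curve must be quotiented out, consistent with the Jacobian only remembering the curve up to translation); and a genus-$0$ vertex contributes $v_i-3$ (the moduli is $\overline{\cM_{0,v_i}}$, with stability forcing $v_i\ge 3$). Since the tree has $\delta+r-1$ edges and therefore $\sum_i v_i = 2(\delta+r-1)$, where $r$ is the number of rational vertices, a routine sum yields
\[ \dim_T = 2(\delta+r-1)-\delta_1-3r = 2\delta-\delta_1-2-r. \]
The maximum is attained at $r=0$, giving the claimed value $2\delta-\delta_1-2$. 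Stability is automatic in this case: for $\delta\ge 2$ every vertex of a tree has valence $\ge 1$, while the degenerate case $\delta=1$ forces $\delta_1=0$ (since $g\ge 2$), consistent with the fiber being a point.

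The main technical point to verify is that this stratification of the fiber is meaningful at the stacky level and that the dimensions of the strata coincide with the component-wise count above rather than being shifted by residual automorphisms of the global configuration. This is essentially cosmetic — all automorphism groups appearing (of the decorated tree, of pointed components) are finite in the Deligne–Mumford setting, so they do not affect dimensions — but it deserves a careful statement. Once that is in place, the formula follows from the elementary tree-combinatorial identity above.
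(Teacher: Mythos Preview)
Your argument is correct and matches the approach the paper indicates: the paper does not give a formal proof but simply remarks, just before the lemma, that Lemma~\ref{lem:ct-Torelli} pins down the positive-genus components while leaving the node positions and rational components free, and that ``a dimension analysis (keeping in mind the dimensions of the automorphism groups of curves of genus $0$ and $1$)'' yields the formula, deferring to \cite[Prop.~5.2.1]{MR2825165} for details. Your stratification by dual trees and the edge-count identity $\sum_i v_i = 2(\delta+r-1)$ carry out exactly this analysis.
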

Next, we describe a stratification on $\abs{\cT_g}$ that is suitable to determine the defect of semi-smallness of $t_g$ (see Definition~\ref{defn:defect}).
\begin{lem}\label{lem:product-finite}
  Let $\lambda = (\lambda_1,\dotsc,\lambda_\delta)$ be a partition of $g$, that is, $\lambda_1 \ge \lambda_2 \ge \dotsb \ge \lambda_\delta$ and $\sum^\delta_{i=1}\lambda_i = g$.
  Then the product morphism $\xi_\lambda \colon \prod^\delta_{i=1} \cT_{\lambda_i} \to \cT_g$ is finite.
\end{lem}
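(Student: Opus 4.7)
The plan is to verify that $\xi_\lambda$ is representable, proper, and quasi-finite, and then to conclude by the standard fact that a representable, proper, quasi-finite morphism of algebraic stacks is finite. All three properties reduce to familiar facts about principally polarized abelian varieties together with the closedness of $\cT_{\lambda_i}\subseteq \cA_{\lambda_i}$, which itself follows from Lemma~\ref{lem:Torelli-proper} since the scheme-theoretic image of a proper morphism is closed.

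First, I would check that $\xi_\lambda$ is well-defined as a map into $\cT_g$ (as opposed to only $\cA_g$). On geometric points, any $A_i\in \cT_{\lambda_i}(k)$ is isomorphic to $J(C_i)$ for some curve $C_i$ of compact type and genus $\lambda_i$, and then gluing the $C_i$ at smooth points produces a compact type curve $C$ of genus $g$ with $J(C)\simeq \prod_i J(C_i)$; hence the image of $\xi_\lambda$ on geometric points lies in $\cT_g$, and by closedness of $\cT_g$ the morphism factors accordingly. Representability is formal: the automorphism group of $(A_1,\dotsc,A_\delta)$ is $\prod_i \Aut(A_i,\text{pol})$, and it includes into $\Aut\bigl(\prod_i A_i,\text{product pol}\bigr)$ via $(\phi_i)\mapsto \prod_i \phi_i$ as a closed subgroup scheme (a given $\phi_i$ is recovered by restriction to the $i$-th factor). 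Quasi-finiteness on geometric points is immediate from the uniqueness of the decomposition of a principally polarized abelian variety as an unordered product of indecomposable ppavs (the same input used in the proof of Lemma~\ref{lem:ct-Torelli}): there are at most finitely many ways to group a fixed indecomposable decomposition of $A\in\cT_g(k)$ into an ordered $\delta$-tuple with prescribed dimensions $\lambda_1,\dotsc,\lambda_\delta$.

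For properness I would use the valuative criterion. Given a DVR $V$ with fraction field $K$, an $A\in \cT_g(V)$ and a lift $(A_{i,K})_i\in \bigl(\prod_i\cT_{\lambda_i}\bigr)(K)$ with $\prod_i A_{i,K}\simeq A_K$, I would extend as follows. The decomposition furnishes orthogonal idempotents $e_{i,K}\in\End(A_K)$ (projection onto the $i$-th factor composed with its inclusion via the product polarization); by the Néron mapping property these extend to orthogonal idempotents $e_i\in\End(A_V)$, and their images give abelian subschemes $A_{i,V}\subseteq A_V$ with $A_V\simeq \prod_i A_{i,V}$. The polarization on $A_V$ restricts on each factor to a morphism $A_{i,V}\to A_{i,V}^\vee$ extending the given principal polarization on $A_{i,K}$; since principality is closed in the space of polarizations (equivalently, being an isomorphism is a closed and open condition on a morphism of abelian schemes extending an isomorphism on the generic fiber), these extensions remain principal. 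Finally, the closed immersion $\cT_{\lambda_i}\hookrightarrow \cA_{\lambda_i}$ plus the fact that $A_{i,K}\in \cT_{\lambda_i}(K)$ forces $A_{i,V}\in \cT_{\lambda_i}(V)$, producing the required lift.

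Combining the three properties yields finiteness. The main technical obstacle I anticipate is the verification in the valuative step that the individual factors $A_{i,V}$ inherit \emph{principal} polarizations from the product, rather than merely polarizations; this should nonetheless follow from the rigidity of abelian schemes together with the closed-and-open characterization of principality. Everything else is essentially formal given Lemma~\ref{lem:Torelli-proper} and the unique decomposition theorem for ppavs.
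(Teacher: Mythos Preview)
Your proposal is correct and follows the same overall architecture as the paper: verify representability, properness via the valuative criterion, and quasi-finiteness via the unique decomposition of principally polarized abelian varieties into indecomposables. The one substantive difference lies in the valuative step. You extend the orthogonal idempotents $e_{i,K}\in\End(A_K)$ to $\End(A_V)$ via the N\'eron mapping property for the given abelian scheme $A_V$, and then read off the factors as their images; the paper instead invokes the N\'eron--Ogg--Shafarevich criterion (using that the Tate module of $A_K$ splits as a product, so each $T_\ell(A_{i,K})$ is unramified) to produce good models $\fA_i$ of the $A_{i,K}$ separately, and only afterwards checks $\prod_i\fA_i\simeq A_V$ by separatedness of $\cA_g$. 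Your route is arguably more direct and sidesteps the Tate-module input entirely; the paper's route makes the extension of the principal polarizations on the individual factors slightly more transparent (each $\fA_i$ is built as a point of $\cA_{\lambda_i}(V)$ from the start), which is precisely the technical point you flagged as the main obstacle.
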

\begin{proof}
  We show that $\xi_\lambda$ is representable, proper, and locally quasi-finite.
  Representability follows from the faithfulness of the product functor \cite[\href{https://stacks.math.columbia.edu/tag/04Y5}{Lem.~04Y5}]{stacks-project}.
  Since the domain is separated and of finite type and the target has separated diagonal, $\xi_\lambda$ is separated and of finite type.

  For universal closedness, we can again verify the existence part of the valuative criterion for all discrete valuation rings $V$ with fraction field $K$ because $\cT_g$ is locally noetherian.
  Let $\fA \in \cT_g(V)$ and $A_i \in \cT_{\lambda_i}(K)$ such that $\fA_K \simeq \prod A_i$.
  The N\'eron--Ogg--Shafarevich criterion \cite[Thm.~1]{MR236190} and the compatibility of Tate modules with products show that the $A_i$ have good reduction.
  Furthermore, their principal polarizations extend to the integral models;
  cf.\ e.g.\ the argument in \cite[p.~6]{MR1083353}.
  Since $\cT_{\lambda_i} \subseteq \cA_{\lambda_i}$ is closed, we thus obtain $\fA_i \in \cT_{\lambda_i}(V)$ with generic fiber isomorphic to $A_i$.
  Separatedness of $\cA_g$ gives $\fA \simeq \prod \fA_i$ as wanted.

  Finally, to prove that $\xi_\lambda$ is locally quasi-finite, we can check that for every morphism $\Spec(K) \to \cT_g$ from a field $K$, the space $\abs{\Spec{K} \times_{\cT_g} \prod \cT_{\lambda_i}}$ is discrete \cite[\href{https://stacks.math.columbia.edu/tag/06UA}{Lem.~06UA}]{stacks-project}.
  Since the base change morphism $\abs{\Spec{\overline{K}} \times_{\cT_g} \prod \cT_{\lambda_i}} \to \abs{\Spec{K} \times_{\cT_g} \prod \cT_{\lambda_i}}$ is surjective and integral (hence closed), it suffices to show that $\abs{\Spec{\overline{K}} \times_{\cT_g} \prod \cT_{\lambda_i}}$ is discrete.
  This follows from the uniqueness of the indecomposable factors of principally polarized abelian varieties used in the proof of Lemma~\ref{lem:ct-Torelli}.
\end{proof}
Below, we will use the partial order on the set of integer partitions given by refinement, as introduced in \cite[Ex.~I.8.10]{MR598630}.
\begin{defn}
  Let $\lambda = (\lambda_1,\dotsc,\lambda_\delta)$ and $\mu = (\mu_1,\dotsc,\mu_\varepsilon)$ be two partitions of $g$.
  Then $\mu$ \emph{refines} $\lambda$, or $\mu \le \lambda$, if there exists a set partition $\{ 1,\dotsc,\varepsilon\} = I_1 \cup \dotsb \cup I_\delta$ such that $\lambda_j = \sum_{i \in I_j} \mu_i$ for all $1 \le j \le \delta$.
\end{defn}
\begin{defn}
  For each partition $\lambda$ of $g$, let $\cT_\lambda$ be the scheme-theoretic image of $\xi_\lambda$.
  Set
  \[ S_\lambda \colonequals \abs{\cT_\lambda} \smallsetminus \bigcup_{\mu < \lambda} \abs{\cT_\mu}. \]
\end{defn}
\begin{lem}\label{lem:Torelli-stratification}
  \begin{enumerate}[leftmargin=*,label={\upshape(\roman*)}]
    \item\label{lem:Torelli-stratification-existence} The subspaces $S_\lambda \subset \abs{\cT_g}$ associated to the partitions $\lambda$ of $g$ are locally closed and parametrize the Jacobians of curves of compact type whose geometric fibers have exactly $\delta$ irreducible components of positive genus $\lambda_i$, giving rise to a finite stratification $\abs{\cT_g} = \bigsqcup_\lambda S_\lambda$.
    \item\label{lem:Torelli-stratification-dimension} For each integer partition $\lambda$ of $g$ into $\delta$ parts, we have $\dim S_\lambda = 3g - 3\delta + \delta_1$, where $\delta_1 \colonequals \#\{ i \suchthat \lambda_i = 1 \}$.
  \end{enumerate}
\end{lem}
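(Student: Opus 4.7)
My plan is to prove both parts by a careful analysis of the image of $\xi_\lambda$, using Lemma~\ref{lem:product-finite} to control its set-theoretic behavior and Lemma~\ref{lem:ct-Torelli} to read off the underlying curves from the image points.

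For part~\ref{lem:Torelli-stratification-existence}, I first note that since $\xi_\lambda$ is finite by Lemma~\ref{lem:product-finite}, it is proper, so its image is closed and coincides with $\abs{\cT_\lambda}$ on underlying spaces. By Lemma~\ref{lem:ct-Torelli}, every geometric point $[P] \in \abs{\cT_g}$ has a well-defined $\mathrm{type}(P)$, namely the partition of $g$ given by the genera of the positive-genus irreducible components of any curve of compact type with Jacobian $P$. I claim
\[ \abs{\cT_\lambda} = \{[P] \in \abs{\cT_g} \suchthat \mathrm{type}(P) \le \lambda\}. \]
For the inclusion $\subseteq$, any point in the image has the form $\prod_j J(D_j)$ for curves $D_j$ of compact type of genus $\lambda_j$, and decomposing each $J(D_j)$ into its positive-genus smooth factors yields a type refining $\lambda$. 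For $\supseteq$, given $[P]$ of type $\mu \le \lambda$ together with a witnessing set partition $\{1,\dotsc,\varepsilon\} = I_1 \sqcup \dotsb \sqcup I_\delta$, assembling the smooth factors indexed by each $I_j$ into a curve of compact type of genus $\lambda_j$ exhibits $[P]$ in the image of $\xi_\lambda$. Consequently, $S_\lambda = \abs{\cT_\lambda} \smallsetminus \bigcup_{\mu < \lambda}\abs{\cT_\mu}$ is open in $\abs{\cT_\lambda}$, hence locally closed in $\abs{\cT_g}$, and parametrizes Jacobians of type exactly $\lambda$. Uniqueness of the type and the finiteness of the set of partitions of $g$ then yield the desired disjoint stratification.

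For part~\ref{lem:Torelli-stratification-dimension}, each $\cT_{\lambda_i}$ is irreducible: this holds for $\lambda_i = 1$ by the convention $\cT_1 = \cA_1$, and for $\lambda_i \ge 2$ because $\cT_{\lambda_i}$ is the closure of the image $M_{\lambda_i}$ of the irreducible stack $\cM_{\lambda_i}$. Hence $\prod_i \cT_{\lambda_i}$ is irreducible of dimension
\[ \sum_{\lambda_i \ge 2}(3\lambda_i - 3) + \sum_{\lambda_i = 1} 1 = 3g - 3\delta + \delta_1, \]
and since $\xi_\lambda$ is finite, $\cT_\lambda$ is irreducible of the same dimension. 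It remains to check that $\dim\cT_\mu < \dim\cT_\lambda$ for every $\mu < \lambda$, so that the finite union $\bigcup_{\mu < \lambda}\cT_\mu$ is a proper closed subset of strictly smaller dimension and $S_\lambda$ inherits the full dimension of $\cT_\lambda$.

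The only genuinely non-routine step, and the main potential obstacle, is this combinatorial dimension inequality. A proper refinement of $\lambda$ replaces some part $\lambda_i \ge 2$ by $k \ge 2$ new parts, of which $p$ have size at least $2$ and $q$ have size $1$, with $p + q = k$. The contribution of that part to the dimension formula changes from $3\lambda_i - 3$ to $3(\lambda_i - q) - 3p + q = 3\lambda_i - 3p - 2q$, so the net decrease is
\[ 3p + 2q - 3 \ge 2(p+q) - 3 \ge 1. \]
Summing over all actually-refined parts gives the required strict inequality $\dim\cT_\mu < \dim\cT_\lambda$, completing the proof of both parts.
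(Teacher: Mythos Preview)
Your proof is correct and follows essentially the same strategy as the paper's: both use the finiteness of $\xi_\lambda$ (Lemma~\ref{lem:product-finite}) to identify $\abs{\cT_\lambda}$ with the set of Jacobians whose type refines $\lambda$, invoke Lemma~\ref{lem:ct-Torelli} for well-definedness of the type, and compute $\dim\cT_\lambda$ from $\dim\cT_{\lambda_i}$. Your argument is in fact a bit more careful than the paper's in part~\ref{lem:Torelli-stratification-dimension}, where the paper only remarks that ``the number of summands of a partition increases with refinement'' and leaves the actual check that $3g-3\delta+\delta_1$ strictly decreases implicit; your elementary computation $3p+2q-3\ge 2(p+q)-3\ge 1$ makes this explicit.
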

\begin{proof}
  \ref{lem:Torelli-stratification-existence}.
  Since $\xi_\lambda$ is closed by Lemma~\ref{lem:product-finite}, it surjects onto its scheme-theoretic image.
  Thus, the closed substack $\cT_\lambda \subseteq \cT_g$ parametrizes those families whose geometric fibers are products of $\delta$ Jacobians of compact type curves of genus $\lambda_1,\dotsc,\lambda_\delta$.
  By Lemma~\ref{lem:ct-Torelli}, the geometric fibers are the Jacobians of exactly those curves of compact type that have $\delta$ (not necessarily irreducible) components of genus $\lambda_1,\dotsc,\lambda_\delta$.
  This yields the first statement as $\abs{\cT_\mu} \subseteq \abs{\cT_\lambda}$ if $\mu \le \lambda$ and $\abs{\cT_\mu} \cap \abs{\cT_\lambda} = \varnothing$ if not.

  \ref{lem:Torelli-stratification-dimension}.
  Since $\xi_\lambda$ is finite by Lemma~\ref{lem:product-finite}, $t_{\lambda_i}$ is generically finite, and $\cT_\lambda$ is Deligne--Mumford,
  \[ \dim \abs{\cT_\lambda} = \dim \cT_\lambda = \sum^\delta_{i=1} \cT_{\lambda_i} = \sum_{\lambda_i \ge 2} (3\lambda_i - 3) + \sum_{\lambda_i =1} 1 = 3g - 3\delta + \delta_1; \]
  cf.\ \cite[\href{https://stacks.math.columbia.edu/tag/0DRK}{Rem.~0DRK}]{stacks-project}.
  The number of summands of a partition increases with refinement, hence $\dim S_\lambda = \dim \abs{\cT_\lambda} = 3g - 3\delta + \delta_1$.
\end{proof}
\begin{rem}
  The preceding arguments also show the existence of a finite stratification of $\cA_g$ by number and dimension of indecomposable principally polarized factors.
\end{rem}
\begin{prop}\label{prop:Torelli-fiber-defect}
  The Torelli morphism $t_g \colon \cM^{\cc}_g \to \cA_g$ has maximal fiber dimension $g-2$ and defect of semi-smallness $r(t_g) = \floor*{\frac{g}{2}} - 1$.
\end{prop}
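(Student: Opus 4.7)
The plan is to deduce both assertions directly from the stratification of the Torelli locus produced in Lemma~\ref{lem:Torelli-stratification} together with the fiber-dimension formula of Lemma~\ref{lem:fiber-dimension}, and then to reduce to an elementary optimization over partitions of~$g$.

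First, the bound on the maximal fiber dimension is essentially immediate from Lemma~\ref{lem:fiber-dimension}: if $C$ is a curve of compact type with $\delta$ irreducible components of positive genus, of which $\delta_1$ have genus~$1$, then since each component with $\lambda_i\ge 2$ contributes at least~$2$ to $g=\sum_i\lambda_i$, one has $g\ge 2(\delta-\delta_1)+\delta_1 = 2\delta-\delta_1$, and thus
\[ \dim\bigl((\cM^{\cc}_g)_{t_g(C)}\bigr) \;=\; 2\delta-\delta_1-2 \;\le\; g-2, \]
with equality, for instance, whenever every positive-genus component of $C$ is elliptic (i.e.\ $\lambda=(1,\dotsc,1)$).

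For the defect of semi-smallness, I would stratify $\abs{\cA_g}$ by the locally closed pieces $S_\lambda\subseteq\abs{\cT_g}$ from Lemma~\ref{lem:Torelli-stratification}.\ref{lem:Torelli-stratification-existence} together with the open complement $\abs{\cA_g}\smallsetminus\abs{\cT_g}$; the latter stratum may be discarded because its preimage under $t_g$ is empty. Since $(\delta,\delta_1)$ is constant on $S_\lambda$, Lemma~\ref{lem:fiber-dimension} shows that each fiber of $t_g$ over a point of $S_\lambda$ has dimension $d_\lambda\colonequals 2\delta-\delta_1-2$; equidimensionality of these fibers can be read off from the dual-graph description of the fibers entering the proof of Lemma~\ref{lem:fiber-dimension}, after refining $S_\lambda$ by dual graph if necessary (such a refinement does not affect the maximum below since $\delta-\delta_1-1$ depends only on $\lambda$). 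Combined with $\dim\cM^{\cc}_g=3g-3$ and $\dim S_\lambda = 3g-3\delta+\delta_1$ from Lemma~\ref{lem:Torelli-stratification}.\ref{lem:Torelli-stratification-dimension}, Definition~\ref{defn:defect} yields
\[ r(t_g) \;=\; \max_\lambda\bigl\{ 2(2\delta-\delta_1-2) + (3g-3\delta+\delta_1) - (3g-3) \bigr\} \;=\; \max_\lambda\{\delta-\delta_1-1\}. \]
The remaining step is to maximize the right-hand side over partitions $\lambda$ of~$g$. Here $\delta-\delta_1$ is the number of parts $\lambda_i\ge 2$, each of which contributes at least~$2$ to $g=\sum_i\lambda_i$; hence $\delta-\delta_1\le\floor*{g/2}$. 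Equality is realized by $\lambda=(2,2,\dotsc,2)$ when $g$ is even and by $\lambda=(3,2,\dotsc,2)$ when $g$ is odd (both have $\delta_1=0$), giving $r(t_g)=\floor*{g/2}-1$.

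The only point that genuinely requires more than bookkeeping is the constancy of the relative dimension of $t_g$ over each $S_\lambda$, because Lemma~\ref{lem:fiber-dimension} records only the dimension of each full fiber rather than equidimensionality. As indicated above, a refinement indexed by the dual graphs (which parametrize how the positive-genus components are joined in a tree, possibly through rational bridges) resolves this and does not alter the final maximum.
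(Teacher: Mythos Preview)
Your proof is correct and follows essentially the same approach as the paper: the paper packages the two elementary inequalities $2\delta-\delta_1\le g$ and $\delta-\delta_1\le\floor{g/2}$ (together with their sharpness) into a separate lemma immediately following the proposition, but otherwise proceeds identically via Lemma~\ref{lem:fiber-dimension} and the stratification of Lemma~\ref{lem:Torelli-stratification}.

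One clarification: your final paragraph about equidimensionality of fibers is unnecessary. Definition~\ref{defn:defect} only asks that $\dim(\cX_y)$ be constant as $y$ ranges over each stratum~$\cY_i$ (this is how the quantity $d_i$ is used in the proof of Lemma~\ref{lem:defect-exact}), and Lemma~\ref{lem:fiber-dimension} gives precisely that $\dim\bigl((\cM^{\cc}_g)_{t_g(C)}\bigr)=2\delta-\delta_1-2$ depends only on the partition~$\lambda$ determined by~$t_g(C)$. No refinement by dual graph is needed.
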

\begin{proof}
  The statement about the fiber dimension follows from Lemma~\ref{lem:fiber-dimension} and Lemma~\ref{lem:component-bounds}.\ref{lem:component-bounds-fiber} below.
  For the defect of semi-smallness, we use the stratification from Lemma~\ref{lem:Torelli-stratification}.\ref{lem:Torelli-stratification-existence}.
  Let $\lambda = (\lambda_1,\dotsc,\lambda_\delta)$ be an integer partition of $g$.
  As before, set $\delta_1 \colonequals \#\{ i \suchthat \lambda_i = 1 \}$.
  The dimension of $S_\lambda$ is $3g - 3\delta + \delta_1$ by Lemma~\ref{lem:Torelli-stratification}.\ref{lem:Torelli-stratification-dimension} and the relative dimension of $t_g$ over $S_\lambda$ is $2\delta - \delta_1 - 2$ by Lemma~\ref{lem:fiber-dimension}.
  The statement about $r(t_g)$ is therefore a consequence of Lemma~\ref{lem:component-bounds}.\ref{lem:component-bounds-defect} and the equality
  \[ 2 \cdot (2\delta - \delta_1 - 2) + (3g - 3\delta + \delta_1) - (3g - 3) = \delta - \delta_1 - 1. \qedhere \]
\end{proof}
\begin{lem}\label{lem:component-bounds}
  Let $C$ be a curve of compact type of genus $g$ over $k$.
  Let $\delta$ and $\delta_1$ be the number of irreducible components of $C$ of positive genus and genus $1$, respectively.
  Then
  \begin{enumerate}[label={\upshape(\roman*)}]
    \item\label{lem:component-bounds-fiber} $2\delta - \delta_1 \le g$
    \item\label{lem:component-bounds-defect} $\delta - \delta_1 \le \floor*{\frac{g}{2}}$
  \end{enumerate}
  and both bounds are sharp.
\end{lem}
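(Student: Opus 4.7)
The strategy is a direct combinatorial counting argument using the well-known fact that, for a curve of compact type, the arithmetic genus equals the sum of the geometric genera of its irreducible components (this is because the dual graph is a tree, so $\mathrm{H}^1(\Gamma,\Z) = 0$ contributes nothing to the genus formula). Thus if $g_1,\dotsc,g_\delta$ denote the geometric genera of the positive-genus components, then the rational components contribute nothing and
\[ g = \sum_{i=1}^\delta g_i. \]

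For part \ref{lem:component-bounds-fiber}, I would split the sum according to whether $g_i = 1$ or $g_i \geq 2$:
\[ g \;=\; \sum_{g_i = 1} 1 \;+\; \sum_{g_i \geq 2} g_i \;\geq\; \delta_1 \,+\, 2(\delta - \delta_1) \;=\; 2\delta - \delta_1, \]
which is the desired inequality. For part \ref{lem:component-bounds-defect}, the same decomposition yields $g \geq 2(\delta-\delta_1)$, so $\delta - \delta_1 \leq g/2$; since $\delta - \delta_1$ is an integer, this forces $\delta - \delta_1 \leq \lfloor g/2 \rfloor$.

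For sharpness, I would give an explicit example in each case. For \ref{lem:component-bounds-fiber}, when $g$ is even, take $C$ to be a chain of $\delta = g/2$ smooth genus-$2$ curves glued at separating nodes; then $\delta_1 = 0$ and $2\delta - \delta_1 = g$. (If $g$ is odd, replace one genus-$2$ component by a genus-$3$ component and add a single genus-$1$ component to preserve compact type: $\delta = (g-1)/2 + 1$, $\delta_1 = 1$, $2\delta - \delta_1 = g$.) For \ref{lem:component-bounds-defect}, when $g$ is even take the same chain of genus-$2$ curves to get $\delta - \delta_1 = g/2 = \lfloor g/2\rfloor$; when $g$ is odd, take $(g-3)/2$ copies of genus $2$ together with one genus-$3$ component, giving $\delta - \delta_1 = (g-1)/2 = \lfloor g/2\rfloor$. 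No step here is really an obstacle; the only thing to be a little careful about is the initial invocation of the genus formula for nodal curves of compact type, which I would cite from the standard reference on stable curves.
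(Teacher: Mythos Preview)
Your argument is correct and, if anything, slightly more direct than the paper's. You use the genus formula $g=\sum_i g_i$ and split the sum by whether $g_i=1$ or $g_i\ge 2$ to obtain the two inequalities immediately. The paper instead argues by reduction: it observes that $2\delta-\delta_1$ and $\delta-\delta_1$ are unchanged by contracting rational components and do not decrease when a component of genus $g'\ge 3$ specializes to two components of genera $g'-2$ and $2$, so one may assume all components have genus $1$ or $2$, at which point $2\delta-\delta_1=g$ and the extremal configurations are visible. Your route is shorter for the inequalities; the paper's reduction has the mild advantage that it simultaneously pinpoints the sharp configurations rather than requiring separate constructions.

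One small wording issue in your sharpness example for \ref{lem:component-bounds-fiber} with $g$ odd: the phrase ``replace one genus-$2$ component by a genus-$3$ component and add a single genus-$1$ component'' does not match the numbers you then write down. What you actually want (and what your formulas $\delta=(g-1)/2+1$, $\delta_1=1$ describe) is simply $(g-1)/2$ genus-$2$ components together with one genus-$1$ component; this is also the example the paper uses. Similarly, for \ref{lem:component-bounds-defect} with $g$ odd you could use the same configuration (giving $\delta-\delta_1=(g-1)/2$) rather than introducing a genus-$3$ component, though your construction is also fine.
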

\begin{proof}
  Since both $2\delta - \delta_1$ and $\delta - \delta_1$ do not change under contraction of rational components, we may assume that $C$ does not have any rational components.
  Moreover, since both expressions do not decrease when a component of genus $g' \ge 3$ specializes to a union of a component of genus $g' - 2$ and $2$, we may further assume that all irreducible components of $C$ have genus $1$ or $2$.
  In that case, $2\delta - \delta_1 = g$ and $\delta - \delta_1$ is maximal when $C$ has $\floor*{\frac{g}{2}}$ irreducible components of genus $2$, with one component of genus $1$ if $g$ is odd, yielding the inequalities.
\end{proof}
\begin{cor}\label{cor:Torelli}
  Let $K \in \D(\cM^{\cc}_g[m])$.
  Then
  \begin{enumerate}[label={\upshape(\roman*)}]
    \item\label{cor:Torelli-perverse} $\R t^m_{g,*}K \in \pD(\cA_g[m])^{\le n+g-2}$ if $K \in \pD(\cM^{\cc}_g[m])^{\le n}$ and
    \item\label{cor:Torelli-constructible} $\R t^m_{g,*}K \in \pD(\cA_g[m])^{\le \floor*{\frac{g}{2}} - 1}$ if $K = F[3g-3]$ for some constructible sheaf $F$ of $\F_p$-modules on $\cM^{\cc}_g[m]$.
  \end{enumerate}
\end{cor}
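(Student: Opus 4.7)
The plan is to bootstrap everything from the level-zero statement by exploiting the cartesian square
\[ \begin{tikzcd}
    \cM^{\cc}_g[m] \arrow[r,"t^m_g"] \arrow[d,"\pi_m"] & \cA_g[m] \arrow[d,"\rho_m"] \\
    \cM^{\cc}_g \arrow[r,"t_g"] & \cA_g
\end{tikzcd} \]
in which $\pi_m$ and $\rho_m$ are finite \'etale covers. Both parts reduce to known geometric properties of $t_g$: it is representable and proper (Lemma~\ref{lem:Torelli-proper}), and by Proposition~\ref{prop:Torelli-fiber-defect} it has maximal fiber dimension $g-2$ and defect of semi-smallness $\floor*{\tfrac{g}{2}} - 1$. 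All of these properties descend to $t^m_g$ essentially for free.

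For part~\ref{cor:Torelli-perverse}, I would apply Proposition~\ref{prop:proper-exact} to $t^m_g$ with $d = g-2$. Since $t_g$ is representable and proper by Lemma~\ref{lem:Torelli-proper} and these properties are stable under base change, so is $t^m_g$; moreover, for any $y \in \abs{\cA_g[m]}$ the fiber $(\cM^{\cc}_g[m])_y$ is identified with $(\cM^{\cc}_g)_{\rho_m(y)}$, whence its dimension is at most $g-2$ by Proposition~\ref{prop:Torelli-fiber-defect}.

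For part~\ref{cor:Torelli-constructible}, I would apply Lemma~\ref{lem:defect-exact}. Note $\dim \cM^{\cc}_g[m] = 3g-3$ because $\pi_m$ is finite \'etale. To bound $r(t^m_g)$, pull back the stratification $\abs{\cT_g} = \bigsqcup_\lambda S_\lambda$ from Lemma~\ref{lem:Torelli-stratification} along $\rho_m$, extended trivially outside $\cT_g[m]$; since $\rho_m$ is finite \'etale, each stratum $\rho^{-1}_m(S_\lambda)$ has the same dimension as $S_\lambda$, and the cartesian square forces the relative dimension of $t^m_g$ over $\rho^{-1}_m(S_\lambda)$ to equal that of $t_g$ over $S_\lambda$. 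Consequently
\[ r(t^m_g) = \max_\lambda \bigl\{ 2d_\lambda + \dim \rho^{-1}_m(S_\lambda) - \dim \cM^{\cc}_g[m] \bigr\} = r(t_g) = \floor*{\tfrac{g}{2}} - 1, \]
and Lemma~\ref{lem:defect-exact} delivers the claim.

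The only subtle point is verifying that the semi-smallness defect is genuinely preserved under passage to level~$m$; but this is a direct consequence of the fact that $\rho_m$ is both \'etale (so preserves dimensions of strata) and the base of a cartesian square (so preserves fiber dimensions). No further work is required beyond invoking the preceding results.
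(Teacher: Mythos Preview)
Your argument is correct but takes a different route from the paper. The paper reduces to the case $m=0$ by a single categorical observation: since $\rho_m$ is finite, the pushforward $\rho_{m,*}$ is t-exact for the perverse t-structure and conservative, so $\R t^m_{g,*}K \in \pD(\cA_g[m])^{\le \ell}$ holds if and only if $\rho_{m,*}\R t^m_{g,*}K \simeq \R t_{g,*}\pi_{m,*}K \in \pD(\cA_g)^{\le \ell}$; one then applies Proposition~\ref{prop:proper-exact} and Lemma~\ref{lem:defect-exact} once, at level zero, to $\pi_{m,*}K$ (which is still semiperverse, respectively of the form $(\pi_{m,*}F)[3g-3]$ with $\pi_{m,*}F$ constructible). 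You instead transport the geometric input---fiber dimensions and the stratification computing $r(t_g)$---up to level $m$ via the \'etale base change $\rho_m$, and invoke Proposition~\ref{prop:proper-exact} and Lemma~\ref{lem:defect-exact} directly for $t^m_g$. Your approach is more hands-on and avoids citing the t-exactness/conservativity of finite pushforward; the paper's approach is slightly slicker in that it never needs to re-examine the stratification at level $m$. Both are short and equally valid.
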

\begin{proof}
  Since the forgetful maps $\rho_m \colon \cA_g[m] \to \cA_g$ are finite, the direct image functors $\rho_{m,*}$ are t-exact for the perverse t-structure (\cite[Cor.~2.2.6.(i)]{MR751966}) and conservative.
  Consequently, $\R t^m_{g,*}K \in \pD(\cA_g[m])^{\le \ell}$ if and only if $\rho_{m,*}\R t^m_{g,*}K \in \pD(\cA_g)^{\le \ell}$, and it suffices to show the statement when $m = 0$.
  In that case, it follows from Lemma~\ref{lem:Torelli-proper}, Proposition~\ref{prop:proper-exact}, Lemma~\ref{lem:defect-exact}, and Proposition~\ref{prop:Torelli-fiber-defect}.
\end{proof}

\subsection{Proof of the main results}\label{subsect:main-results}
From here on, we additionally assume that $k = \C$.
By fixing a (non-canonical) isomorphism $\C \simeq \C_p$, we then have all results of \S~\ref{sect:perfectoid-Artin} at our disposal.
We are now ready to prove Theorem~\ref{thm:perverse} and Theorem~\ref{thm:Mgbar}, which we restate for the convenience of the reader.
\perverse*
\begin{proof}
  Let $\rho_n \colon \cA_g[p^n] \to \cA_g$ be the natural maps forgetting the level structure.
  Since the canonical maps $\rho^*_n\R t_{g,*} K \to \R t^{p^n}_{g,*}\pi^*_n K$ are isomorphisms for all $n \ge 0$ and all $K \in \D(\cM^{\cc}_g)$ by proper base change, Corollary~\ref{cor:Torelli} reduces the assertion to showing that $\colim_n\Hh^i_{\et}(\cA_g[p^n],\rho^*_n L) = 0$ for all $L \in \pD(\cA_g)^{\le \ell}$ and all $i > \ell$.

  As in Example~\ref{exmp:Ag-tor}, we denote by $\overline{\cA_g}[p^n]$ the toroidal compactifications of $\cA_g[p^n]$ determined by a fixed smooth, projective $\GL_g(\Z)$-admissible polyhedral decomposition of the cone of positive semi-definite quadratic forms on $\R^g$ whose null space is defined over $\Q$.
  Let $\overline{\rho}_{mn} \colon \overline{\cA_g}[p^m] \to \overline{\cA_g}[p^n]$ be the natural transition maps.
  The inclusions $\iota_n \colon \cA_g[p^n] \hookrightarrow \overline{\cA_g}[p^n]$ cut out the complement of a Cartier divisor and are thus affine morphisms.
  Therefore, $\R\iota_{n,*}\rho^*_n L \in \pD(\overline{\cA_g}[p^n])^{\le \ell}$ (Theorem~\ref{thm:affine-exact}) and
  \[ \colim_n\Hh^i_{\et}(\cA_g[p^n],\rho^*_n L) \simeq \colim_n\Hh^i_{\et}(\overline{\cA_g}[p^n],\R\iota_{n,*}\rho^*_n L) = 0 \]
  for all $i > \ell$ by Example~\ref{exmp:Ag-tor} and Corollary~\ref{cor:perfectoid-Artin-perverse} applied to $K_n = \R\iota_{n,*}\rho^*_n L$ and the natural base change maps $\varphi^*_{mn} \colon \overline{\rho}^*_{mn}\R\iota_{n,*}\rho^*_n L \to \R\iota_{m,*}\rho^*_m L$.
\end{proof}
\Mgbar*
\begin{proof}
  By Corollary~\ref{cor:non-ct-nc}, the locus $\cD_{p^n} \subset \overline{\cM_g}[p^n]$ of curves not of compact type is a normal crossings divisor.
  Lemma~\ref{lem:Mg-p-ramified} shows that the transition maps $\overline{\cM_g}[p^{n+1}] \to \overline{\cM_g}[p^n]$ are $p$-ramified over $\cD_{p^n}$.
  Thus, the assertion follows from Theorem~\ref{thm:tower-cohomology}.
\end{proof}
Theorem~\ref{thm:vanishing} is a consequence of these two results.
\vanishing*
\begin{proof}
  Cases \ref{thm:vanishing-Mgc} and \ref{thm:vanishing-Mgbar} are immediate from Theorem~\ref{thm:perverse}.\ref{thm:perverse-constructible} and Theorem~\ref{thm:Mgbar}.
  For \ref{thm:vanishing-Mg}, note that $\cM_g[p^n] \subset \cM^{\cc}_g[p^n]$ is the inclusion of the complement of a Cartier divisor and the derived direct image of $\F_p[3g-3]$ is therefore semiperverse by Theorem~\ref{thm:affine-exact}.
  We conclude from Theorem~\ref{thm:perverse}.\ref{thm:perverse-perverse} and smooth base change.
\end{proof}
\begin{exmp}\label{exmp:Mumford-compactification}
  When $p^n \ge 3$, the stack $\cM^{\cc}_g[p^n]$ is isomorphic to its coarse space $M^{\cc}_g[p^n]$.
  Another compactification $\overline{M_g}[p^n]$ of $M^{\cc}_g[p^n]$ due to Mumford is given by the normalization of $\overline{M_g}$ inside the function field of $M^{\cc}_g[p^n]$.
  This is the coarse space of $\overline{\cM_g}[p^n]$.

  Since $\overline{M_g}[p^n]$ is in general singular at the boundary $D_{p^n} \colonequals \overline{M_g}[p^n] \smallsetminus M^{\cc}_g[p^n]$ (as follows for example from (\ref{eqn:local-ring-Mumford-compactification}) below), $D_{p^n}$ with its induced reduced subscheme structure cannot be a normal crossings divisor and it does not make sense to ask if the transition maps of the projective system $\overline{M_g}[p^n]$ are $p$-ramified over $D_{p^n}$.
  However, one might wonder if methods akin to those of \S~\ref{sect:boundary-vanishing} can still be used to prove a version of Theorem~\ref{thm:Mgbar}.
  We show now that this is in fact not the case.

  Let $y \in \overline{M_3}(\C)$ be a closed point corresponding to a ``wheel'' of two smooth genus $1$ curves attached at two points.
  Let $y_n \in \overline{M_3}[p^n](\C)$ be a compatible system of lifts of $y$.
  For any $n$, there is an isomorphism $\theta_n \colon \cO^\wedge_{\overline{\cM_3}[p^n],y_n} \simeq \C \llbracket t_1,\dotsc,t_6 \rrbracket$, with the singular curves parametrized by the locus $\{t_1t_2 = 0\}$.
  Since the vanishing cycles corresponding to the two nodes are homologous, the description of $H_{y_n}$ for $k = \C$ via Dehn twists from Example~\ref{exmp:description-Hy} shows that $H_{y_n}$ is given by the antidiagonal in $\Aut_C(\cC) \simeq \mu_{p^n} \times \mu_{p^n}$.
  By \cite[Lem.~5.3]{MR2309994} (or a direct tangent space calculation), $\theta_n$ can be chosen such that the action of $\bigl(\zeta,\zeta^{-1}\bigr) \in H_{y_n}$ on $\cO^\wedge_{\overline{\cM_3}[p^n],y_n}$ from Lemma~\ref{lem:description-automorphisms} is identified with
  \[ t_i \mapsto \begin{cases} \zeta^{3-2i}t_i & \text{if } 1 \leq i \leq 2, \\
    t_i & \text{if } 3 \leq i \leq 6, \end{cases} \]
  and the transition maps become
  \[ t_i \mapsto \begin{cases} t^p_i & \text{if } 1 \leq i \leq 2, \\
    t_i & \text{if } 3 \leq i \leq 6. \end{cases} \]

  The completed local ring of the coarse moduli space at $y_n$ is computed as the $H_{y_n}$-invariants of $\cO^\wedge_{\overline{\cM_3}[p^n],y_n}$, and thus in these coordinates as the $\C$-subalgebra of $\C \llbracket t_1,\dotsc,t_6 \rrbracket$ generated by $t^{p^n}_1$, $t_1t_2$, $t^{p^n}_2$, and $t_i$ for $i \geq 3$.
  In other words,
  \begin{equation}\label{eqn:local-ring-Mumford-compactification}
    \cO^\wedge_{\overline{M_3}[p^n],y_n} \simeq \C\llbracket t^{p^n}_1,t^{p^n}_2,t_3,\dotsc,t_6,z \rrbracket / \bigl(t^{p^n}_1t^{p^n}_2 - z^{p^n}\bigr),
  \end{equation}
  with the transition maps given as before and by $z \mapsto z^p$.
  Further, the complement of $D_{p^n}$ in $\Spec \cO^\wedge_{\overline{M_3}[p^n],y_n}$ is $\Spec k \llbracket t,t_3,\dotsc,t_6,z \rrbracket_{tz}$ via the isomorphism taking $t^{p^n}_1$ to $t$ and $t^{p^n}_2$ to $\frac{z^{p^n}}{t}$.
  Here, the transition maps are $(t,t_3,\dotsc,t_6,z) \mapsto (t,t_3,\dotsc,t_6,z^p)$.
  Thus,
  \[ \hocolim \bigl(\hat{\imath}_{n,*}\R \hat{\imath}^!_n \widehat{\Lambda}_n\bigr)_{y_n} \simeq \hocolim \R\widetilde{\Gamma}\bigl(S^1 \times S^1,\F^{\oplus r}_p\bigr)[-1] \]
  does not vanish.
\end{exmp}

\section*{Acknowledgments}
I am deeply grateful to my advisor Bhargav Bhatt for introducing me to many of the ideas surrounding this paper and for sharing numerous crucial mathematical insights with me.
I would further like to thank H\'el\`ene Esnault, David Hansen, Shizhang Li, and Martin Olsson for helpful conversations and correspondence.

\end{document}